\newcounter{mnotecount}[section]
\newcommand{\rmnote}[1]{}
\DeclareFontFamily{U}{mathb}{\hyphenchar\font45}
\DeclareFontShape{U}{mathb}{m}{n}{
      <5> <6> <7> <8> <9> <10> gen * mathb
      <10.95> mathb10 <12> <14.4> <17.28> <20.74> <24.88> mathb12
      }{}
\DeclareSymbolFont{mathb}{U}{mathb}{m}{n}
\let\dot\relax
\DeclareMathAccent{\dot}{0}{mathb}{"39}
\let\ddot\relax
\DeclareMathAccent{\ddot}{0}{mathb}{"3A}
\let\dddot\relax
\DeclareMathAccent{\dddot}{0}{mathb}{"3B}
\let\ddddot\relax
\DeclareMathAccent{\ddddot}{0}{mathb}{"3C}
\theoremstyle{plain}
\newtheorem*{theorem*}{Theorem}
\newtheorem{theorem}{Theorem}[section]
\newtheorem*{lemma*}{Lemma}
\newtheorem{lemma}[theorem]{Lemma}
\newtheorem*{proposition*}{Proposition}
\newtheorem{proposition}[theorem]{Proposition}
\newtheorem*{corollary*}{Corollary}
\newtheorem{corollary}[theorem]{Corollary}
\newtheorem*{claim*}{Claim}
\newtheorem{claim}{Claim}
\newtheorem*{conjecture*}{Conjecture}
\newtheorem*{question*}{Question}
\theoremstyle{definition}
\newtheorem*{definition*}{Definition}
\newtheorem{definition}[theorem]{Definition}
\newtheorem*{example*}{Example}
\newtheorem*{algorithm*}{Algorithm}
\newtheorem*{remark*}{Remark}
\newtheorem*{remarks*}{Remarks}
\newtheorem{remark}[theorem]{Remark}
\newtheorem*{convention*}{Convention}
\numberwithin{equation}{section}
\def\al{\alpha}
\def\be{\beta}
\def\ga{\gamma}
\def\de{\delta}
\def\ze{\zeta}
\def\la{\lambda}
\def\rh{\rho}
\def\si{\sigma}
\def\ta{\tau}
\def\ph{\phi}
\def\vh{\varphi}
\def\ps{\psi}
\def\om{\omega}
\def\Ph{\Phi}
\def\C{\mathbb{C}}
\def\N{\mathbb{N}}
\def\R{\mathbb{R}}
\def\cA{\mathcal{A}}
\def\cB{\mathcal{B}}
\def\cD{\mathcal{D}}
\def\cF{\mathcal{F}}
\def\cG{\mathcal{G}}
\def\cH{\mathcal{H}}
\def\cL{\mathcal{L}}
\def\cN{\mathcal{N}}
\def\cP{\mathcal{P}}
\def\cR{\mathcal{R}}
\def\cS{\mathcal{S}}
\newcommand{\Diff}{\on{Diff}}
\def\rM{\{M\}}
\def\bM{(M)}
\def\BM{\cB^{[M]}}
\def\BrM{\cB^{\rM}}
\def\BbM{\cB^{\bM}}
\def\DM{\cD^{[M]}}
\def\DrM{\cD^{\rM}}
\def\DbM{\cD^{\bM}}
\def\CM{C^{[M]}}
\def\CrM{C^{\rM}}
\def\CbM{C^{\bM}}
\def\SLM{\tensor{\cS}{}_{[L]}^{[M]}}
\def\SrLM{\tensor{\cS}{}_{\{L\}}^{\rM}}
\def\SbLM{\tensor{\cS}{}_{(L)}^{\bM}}
\def\SrML{\tensor{\cS}{}_{\{M\}}^{\{L\}}}
\def\Sp{W^{\infty,p}}
\def\SpM{W^{[M],p}}
\def\SpbM{W^{\bM,p}}
\def\SprM{W^{\rM,p}}
\def\p{\partial}
\renewcommand{\Im}{\mathrm{Im}}
\def\<{\langle}
\def\>{\rangle}
\renewcommand{\o}{\circ}
\def\supp{\on{supp}}
\def\Id{\on{Id}}
\let\on=\operatorname
\newcommand{\sr}[1]%
{\ifmmode{}^\dagger\else${}^\dagger$\fi\ifvmode
\vbox to 0pt{\vss
 \hbox to 0pt{\hskip\hsize\hskip1em
 \vbox{\hsize3cm\raggedright\pretolerance10000
 \noindent #1\hfill}\hss}\vss}\else
 \vadjust{\vbox to0pt{\vss%
 \hbox to 0pt{\hskip\hsize\hskip1em%
 \vbox{\hsize3cm\raggedright\pretolerance10000%
 \noindent #1\hfill}\hss}\vss}}\fi%
}
\def\ev{\on{ev}}
\providecommand{\mapsfrom}{\kern.2em%
\setbox0=\hbox{$\leftarrow$\kern-.10em\rule[0.26mm]{0.1mm}{1.3mm}}\box0%
\kern.3em}
\title{The Trouv\'e group for spaces of test functions}
\author[D.N.~Nenning and A.~Rainer]{David Nicolas Nenning and Armin Rainer}
\address{D.N.~Nenning: Fakult\"at f\"ur Mathematik, Universit\"at Wien, 
Oskar-Morgenstern-Platz~1, A-1090 Wien, Austria}
\email{david.nicolas.nenning@univie.ac.at}
\address{A.~Rainer: 
Fakult\"at f\"ur Mathematik, Universit\"at Wien, 
Oskar-Morgenstern-Platz~1, A-1090 Wien, Austria}
\email{armin.rainer@univie.ac.at}
\begin{document}
	
\begin{abstract}
	The Trouv\'e group $\cG_\cA$ from image analysis consists of the flows at a fixed time of all time-dependent vectors fields 
	of a given regularity $\cA(\R^d,\R^d)$.  
	For a multitude of regularity classes $\cA$, we prove that the Trouv\'e group $\cG_\cA$ coincides with the 
	connected component of the identity of the group of orientation preserving diffeomorphims of $\R^d$ 
	which differ from the identity by a mapping of class $\cA$. We thus conclude that $\cG_\cA$ has a natural 
	regular Lie group structure. 
	In many cases we show that the mapping which takes a time-dependent vector field to its flow is continuous.
	As a consequence we obtain that the scale of Bergman spaces on the polystrip with variable width 
	is stable under solving ordinary differential equations. 
\end{abstract}

\thanks{The authors were supported by FWF-Project P~26735-N25}
\keywords{Flows of time-dependent vector fields, Trouv\'e group,  
diffeomorphism groups, continuity of the flow map, ODE-closedness, Bergman space}
\subjclass[2010]{
	58B10,  	
	58B25,  	
	58C07,  	
	58D05, 	    
	32A36}  	
\date{\today}

	\maketitle
	
	\section{Introduction}

	It is well-known that a time-dependent vector field $u \in L^1([0,1], C^1_b(\R^d,\R^d))$, where 
	$C^1_b(\R^d,\R^d)$ denotes the space of $C^1$-mappings which are globally bounded together with its first derivative,  
	has a unique flow 
	\begin{equation}
		\Ph(t,x) = x + \int_0^t u(s,\Ph(s,x))\, ds, \quad x \in \R^d,~ t \in [0,1] 
	\end{equation}
	such that $\Ph(t,\cdot) - \Id \in C^1_b(\R^d,\R^d)$ for all $t$. 
	Given a locally convex space $\cA$ of mappings $f : \R^d \to \R^d$ which is continuously embedded in $C^1_b(\R^d,\R^d)$, 
	one defines the associated \emph{Trouv\'e group} 
	\[
		\cG_\cA := \{\Ph(1,\cdot) : u \in \cF_\cA\},
	\]
	where $\cF_\cA$ is a suitable family of functions $[0,1] \to \cA(\R^d,\R^d)$; 
	this construction is due to Trouv\'e \cite{Trouve95}, see also \cite{Younes10}.  
	We require that the elements of $\cF_\cA$ are \emph{integrable} functions in some sense, 
	but the precise definition depends on the 
	structure of the space $\cA(\R^d,\R^d)$. If the latter space is a Fr\'echet space, then the elements of $\cF_\cA$ are 
	\emph{integrable by seminorm} (see \Cref{seminorm}), in particular, Bochner integrable in the Banach case. 

	There has been a recent interest in a precise description of the regularity properties of the elements of $\cG_\cA$; 
	see \cite{BruverisVialard14} for the Sobolev and \cite{NenningRainer16} for the H\"older case.
	In \cite{Glockner16} similar problems are studied in the context of Banach and Fr\'echet Lie groups.

	In this paper we prove that the Trouv\'e group $\cG_\cA$ is equal to the connected component of the identity 
	$\Diff_0 \cA$
	of 
	\[
		\on{Diff} \cA := \{\Ph \in \Id+\cA(\R^d,\R^d): \inf_{x \in \R^d}\det d\Ph(x) > 0 \}
	\] 
	for all of the following classes $\cA$: 
	\begin{itemize}
   \item Smooth functions with globally bounded derivatives $\cB$ ($=\cD_{L^\infty}$ in \cite{Schwartz66}).
   \item Smooth functions with $p$-integrable derivatives $\Sp$ ($=\cD_{L^p}$ in \cite{Schwartz66}).
   \item Rapidly decreasing Schwartz functions $\cS$.
	 \item Smooth functions with compact support $\cD$.
   \item Global Denjoy--Carleman classes $\BM$.
   \item Sobolev--Denjoy--Carleman classes $\SpM$.
   \item Gelfand--Shilov classes $\SLM$.
	 \item Denjoy--Carleman functions with compact support $\DM$.   
 \end{itemize} 
Hereby, $M=(M_k)$ is a \emph{(strictly) regular} sequence (see \Cref{def:regular}), while 
for the sequence $L=(L_k)$ we just assume $L_k\ge 1$ for all $k$.

In all these cases $\on{Diff} \cA$ is a $C^\infty$ (resp.\ $\CM$, see \Cref{sec:localDC}) regular Lie group, 
by \cite{MichorMumford13} and \cite{KrieglMichorRainer14a}, see also \cite{KrieglMichorRainer16}. 
It thus follows that $\cG_\cA$ has a natural Lie group structure.
In contrast to \cite{MichorMumford13} and \cite{KrieglMichorRainer14a}, where the vector fields were $C^\infty$ 
(resp.\ $\CM$) in time $t$ and hence the $C^\infty$ (resp.\ $\CM$) exponential law applied,   
we are focused in this paper on vector fields which are just integrable in time. 

For the unweighted spaces $\cB$, $W^{\infty,p}$, $\cS$, and $\cD$ we prove that the map which sends a time-dependent 
vector field $u \in L^1([0,1],\cA(\R^d,\R^d))$ to its flow $\Ph_u \in C([0,1],\Diff \cA)$ is continuous.
	
	As an application we obtain that the scale of Bergman spaces on the polystrip with variable width is ODE-closed.
	More precisely:
	The $L^p$-Bergman space $A^p(S_{(r)})$ is the Banach space of $p$-integrable holomorphic functions on the polystrip 
	$S_{(r)} := \{z=(z_1,\ldots,z_d) \in \C^d : \Im (z_i) < r,\, 1 \le i \le d\}$. We prove that the inductive limit 
	\[
		\underrightarrow A^p(\R^d) := \varinjlim_{r>0} A^p(S_{(r)})
	\]
	is topologically isomorphic to the Sobolev--Denjoy--Carleman space $W^{\{\mathbf 1\},p}(\R^d)$ of Roumieu type, where 
	$\mathbf 1 =(1,1,\ldots)$. 
	Similarly, the projective limit 
	\[
		\underleftarrow A^p(\R^d) := \varprojlim_{r>0} A^p(S_{(r)})
	\]
	is topologically isomorphic to the Sobolev--Denjoy--Carleman space $W^{(\mathbf 1),p}(\R^d)$ of Beurling type.
	This allows us to conclude that the flow of any time-dependent vector field $u : [0,1] \to A^p(S_{(r)},\R^d)$, 
	for some $r>0$, which is Bochner integrable in time 
	is a continuous curve $t \mapsto \Ph_u(t)$ in $\Id + \underrightarrow A^p(\R^d,\R^d)$.  
	Analogously, if $u : [0,1] \to \underleftarrow A^p(\R^d,\R^d)$ is integrable by seminorm,  
	then $t \mapsto \Ph_u(t)$ is a continuous curve in $\Id + \underleftarrow A^p(\R^d,\R^d)$.

	The paper is organized as follows. 
	We introduce notation and some necessary background in \Cref{sec:notation}. 
	The function spaces and diffeomorphism groups of interest in this paper are defined in 
	\Cref{spaces}. 
	Our main result $\cG_\cA = \Diff_0 \cA$ is proved in \Cref{sec:diffgroups}.   
	In \Cref{sec:continuity} we show continuity of the flow map in the unweighted cases.
	The application for the scale of Bergman spaces on the polystrip with variable width 
	is given in \Cref{sec:Bergman}.

	\section{Notation and Preliminaries} \label{sec:notation}
	
	In what follows $\N = \{0,1,\dots\}$, $\N_{>0}=\N_{\ge 1} = \N \setminus \{0\}$. 
	For $\al = (\al_1, \dots, \al_d) \in \N^d$, $|\al|:= \al_1 + \cdots +\al_d$ and 
	$\al!:= \al_1!\cdots\al_d!$. By the multinomial theorem, for all $\al \in \N^d$,
	\begin{equation}
		\label{multinomial}
		|\al|!\le d^{|\al|} \al!.
	\end{equation}
	For a $C^k$-map $f: \R^d \to \R^n$ and $|\al|\le k$, $\p^\al f$ denotes the $\al$-th partial 
	derivative of $f$. We also write $\p^\al_x$ to make clear which variable is in the scope of differentiation.

	For a $C^k$-function $f:U\to F$ defined on an open subset $U$ of a Banach space $E$ with values in some 
	other Banach space $F$, we denote by $f^{(k)}=\p^k f = d^k f: U \to L_k(E,F)$ the $k$-th Fr\'echet derivative, 
	where $L_k(E,F)$ is the space of $k$-linear mappings $E^k \to F$ endowed with the operator norm.\par

		For $f \in Z^{X \times Y}$ we consider $f^\vee \in (Z^Y)^X$ defined by $f^\vee(x)(y) = f(x,y)$, and
	with $g \in (Z^Y)^X$ we associate $g^\wedge \in Z^{X \times Y}$ given by $g^\wedge (x,y) = g(x)(y)$.

	\subsection{Sobolev spaces}
		For $k \in \N$ and $p \in [1,\infty]$, let $W^{k,p}(\R^d,\R)$ denote the space of $k$-times 
		weakly differentiable functions defined on $\R^d$ with values in $\R$ such that all 
		partial derivatives up to order $k$ lie in $L^p(\R^d,\R)$. We endow $W^{k,p}(\R^d,\R)$ with the norm 
		\[
			\|f\|_{W^{k,p}}:= \sum_{|\al|\le k} \|\p^\al f\|_{L^p}.
		\]
	An important tool in the subsequent considerations is the following Sobolev inequality (e.g.\ \cite{Adams75}).
	
	\begin{lemma}
		\label{SobolevInequality}
		Let $k:=\lfloor \frac{d}{p} \rfloor +1$. Then $W^{k,p}(\R^d,\R) \subseteq C_b(\R^d,\R)$ (where $C_b$ denotes the space of bounded continuous functions) and there exists a constant $C = C(d,p)$ such that, for all $f \in W^{k,p}(\R^d,\R)$,
		\[
		\|f\|_{L^\infty} \le C\|f\|_{W^{k,p}}.
		\]
	\end{lemma}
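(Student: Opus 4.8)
The plan is to reduce the statement to a pointwise representation of $f(x)$ in terms of its derivatives over a \emph{fixed} unit ball centred at $x$, and then to apply Hölder's inequality; the decisive point is an elementary integrability condition on the resulting singular kernel that holds \emph{exactly} when $kp>d$.

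First I would dispose of the case $p=\infty$, which is trivial: there $k=1$ and $\|f\|_{L^\infty}\le\|f\|_{W^{1,\infty}}$ holds directly from the definition of the norm. For $p<\infty$ the space $C_c^\infty(\R^d,\R)$ is dense in $W^{k,p}(\R^d,\R)$, so by continuity it suffices to establish the asserted inequality for $f\in C_c^\infty(\R^d,\R)$ and then pass to the limit. Moreover, applying the uniform bound to differences shows that an approximating sequence is uniformly Cauchy; hence its limit is continuous and bounded, and the inclusion $W^{k,p}(\R^d,\R)\subseteq C_b(\R^d,\R)$ follows along with the norm estimate.

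Next, for fixed $x\in\R^d$ and a unit vector $\omega$, I would apply Taylor's formula with integral remainder to the function $\rho\mapsto f(x+\rho\omega)$, expanded about $\rho$ and solved for the value at $0$, where $\p_\omega$ denotes the directional derivative along $\omega$:
\[
f(x)=\sum_{j=0}^{k-1}\frac{(-\rho)^j}{j!}\,\p_\omega^j f(x+\rho\omega)
+\frac{(-1)^k}{(k-1)!}\int_0^{\rho} s^{k-1}\,\p_\omega^k f(x+s\omega)\,ds .
\]
Multiplying by the Jacobian weight $\rho^{d-1}$, integrating over $\rho\in(0,1)$ and over $\omega\in\mathbb S^{d-1}$, and passing to the variable $z=x+s\omega$ (so that $s^{k-1}\,ds\,d\omega=|x-z|^{k-d}\,dz$), I obtain a representation of the shape
\[
|f(x)|\le C_1\sum_{j=0}^{k-1}\|d^j f\|_{L^1(B_1(x))}
+C_2\int_{B_1(x)}\frac{|d^k f(z)|}{|x-z|^{d-k}}\,dz ,
\]
using $|\p_\omega^j f|\le|d^j f|$. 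Since $B_1(x)$ has finite volume, each lower-order term is bounded, after Hölder, by $C\,\|f\|_{W^{k-1,p}}$.

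The heart of the matter is the potential term. By Hölder's inequality with $p'=p/(p-1)$ it is dominated by $\|d^k f\|_{L^p}$ times $\bigl(\int_{B_1(0)}|y|^{(k-d)p'}\,dy\bigr)^{1/p'}$, and this last integral converges precisely when $(d-k)p'<d$; an elementary rearrangement shows this is equivalent to $kp>d$, which is guaranteed by $k=\lfloor d/p\rfloor+1>d/p$. Collecting the estimates yields $|f(x)|\le C(d,p)\,\|f\|_{W^{k,p}}$ with a constant independent of $x$, because we always localise to the unit ball around the point under consideration; taking the supremum over $x$ finishes the argument. I expect the only genuinely delicate ingredient to be this exponent bookkeeping. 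Localising to a fixed ball, rather than integrating the radial remainder out to infinity, is what simultaneously avoids any divergence at infinity and explains why the full $W^{k,p}$-norm—and not merely $\|d^k f\|_{L^p}$—appears on the right-hand side.
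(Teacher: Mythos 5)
Your proof is correct. Note, however, that the paper does not prove this lemma at all: it is quoted as a standard Sobolev inequality with a pointer to Adams' book, so there is no internal argument to compare with. What you supply is the classical self-contained proof of the case $kp>d$ of the Sobolev embedding: Taylor expansion along rays solved for the value at the centre, averaging over the unit ball, conversion of the remainder into a Riesz-potential term with kernel $|x-z|^{k-d}$, and H\"older with the integrability condition $(d-k)p'<d\iff kp>d$, which is exactly what $k=\lfloor d/p\rfloor+1$ guarantees. The exponent bookkeeping, the change of variables $s^{k-1}\,ds\,d\omega=|x-z|^{k-d}\,dz$, and the density/uniform-Cauchy argument yielding the continuous bounded representative are all sound; localising to the unit ball correctly explains why the full $W^{k,p}$-norm appears. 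Two endpoint remarks you should make explicit. For $p=1$ one has $p'=\infty$, so the literal H\"older step with $\bigl(\int_{B_1}|y|^{(k-d)p'}\,dy\bigr)^{1/p'}$ degenerates; instead bound the potential term by $\|\,|x-\cdot|^{k-d}\|_{L^\infty(B_1(x))}\,\|d^kf\|_{L^1}$, which works since then $k=d+1$ makes the kernel bounded. For $p=\infty$ the norm inequality is indeed trivial, but the inclusion into $C_b$ is not purely definitional, since $C^\infty_c$ is not dense in $W^{1,\infty}$; one invokes the standard fact that elements of $W^{1,\infty}(\R^d)$ admit Lipschitz (hence continuous, bounded) representatives. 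With these two sentences added, the argument is complete and matches the textbook proof the paper implicitly relies on.
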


	\subsection{Fa\`a di Bruno's formula}
	The Fa\`a di Bruno formula is a generalization of the chain rule to higher order derivatives. The next proposition is a multivariable version, a proof can be found in \cite[Proposition 4.3]{BM04}.
	\begin{proposition}
		\label{fdb}
		Let $f \in C^\infty(\R^m,\R)$ and $g \in C^\infty(\R^n,\R^m)$.
		We have, for all $\ga \in \N^n\setminus \{0\}$, 
		\begin{equation} \label{faa}
			\frac{\p^\ga(f\o g)(x)}{\ga!} = \sum \frac{\al!}{k_1!\cdots k_\ell!}\, \frac{(\p^\al f)(g(x))}{\al!} 
			\Big(\frac{\p^{\de_1}g(x)}{\de_1!}\Big)^{k_1} \cdots \Big(\frac{\p^{\de_\ell}g(x)}{\de_\ell!}\Big)^{k_\ell},
		\end{equation}
		where $\al=k_1+\cdots+ k_\ell$ and the sum is taken over all sets $\{\de_1,\ldots,\de_\ell\}$ of $\ell$ distinct 
		elements of $\N^n \setminus \{0\}$ and all ordered $\ell$-tuples $(k_1,\ldots,k_\ell) \in (\N^m \setminus \{0\})^\ell$, 
		$\ell = 1, 2,\ldots$, such that $\ga = \sum_{i=1}^\ell |k_i| \de_i$.
	\end{proposition}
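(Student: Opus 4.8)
My plan is to recognize \eqref{faa} as the rule for composing Taylor expansions, read off coefficient by coefficient. For a $C^\infty$ function $h$, the quantity $\frac{\p^\ga h(x)}{\ga!}$ is precisely the coefficient of $t^\ga$ in the Taylor expansion of $t\mapsto h(x+t)$ about $t=0$. So I would substitute the Taylor expansion of $g$ into that of $f$, extract the coefficient of $t^\ga$, and verify that the resulting combinatorial sum is exactly the right-hand side of \eqref{faa}.

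To set this up I fix $x\in\R^n$, put $N:=|\ga|$, and write $c_\de := \frac{\p^\de g(x)}{\de!}\in\R^m$ for $\de\in\N^n$. Taylor's theorem gives $g(x+t)-g(x)=\sum_{1\le|\de|\le N}c_\de\,t^\de + o(|t|^N)$ componentwise, and $f(g(x)+s)=\sum_{|\al|\le N}\frac{\p^\al f(g(x))}{\al!}\,s^\al + o(|s|^N)$. Substituting $s=g(x+t)-g(x)=O(|t|)$ and discarding every contribution of order $o(|t|^N)$ leaves
\[
	f(g(x+t)) \;=\; \sum_{|\al|\le N}\frac{\p^\al f(g(x))}{\al!}\,\Big(\sum_{1\le|\de|\le N}c_\de\,t^\de\Big)^{\al} \;+\; o(|t|^N),
\]
where $(\,\cdot\,)^\al=\prod_{j=1}^m(\,\cdot\,)_j^{\al_j}$ is the componentwise vector power; since each $\de$ carries $|\de|\ge 1$, only $\al$ with $|\al|\le N$ can produce a monomial $t^\ga$, so the sum is finite. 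By uniqueness of the degree-$N$ Taylor polynomial, the coefficients of $t^\ga$ on the two sides must agree (the terms of degree $>N$ on the right and the $o(|t|^N)$ error being irrelevant for $|\ga|=N$), the left-hand coefficient being $\frac{\p^\ga(f\o g)(x)}{\ga!}$.

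It then remains to compute the coefficient of $t^\ga$ in $\big(\sum_\de c_\de t^\de\big)^\al=\prod_{j=1}^m\big(\sum_\de c_{\de,j}t^\de\big)^{\al_j}$, where $c_{\de,j}$ is the $j$-th component of $c_\de$. Applying the multinomial theorem to each of the $m$ factors introduces, for every $\de$ and every $j$, a multiplicity $n^{(j)}_\de\in\N$ with $\sum_\de n^{(j)}_\de=\al_j$; writing $k_\de:=(n^{(1)}_\de,\ldots,n^{(m)}_\de)\in\N^m$, the product of the $m$ multinomial coefficients collapses to $\frac{\al!}{\prod_\de k_\de!}$, the contributed monomial is $t^{\sum_\de|k_\de|\de}$, and the conditions $\sum_\de n^{(j)}_\de=\al_j$ become $\sum_\de k_\de=\al$. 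Keeping only the families with $\sum_\de|k_\de|\de=\ga$ and relabelling the finitely many indices $\de$ with $k_\de\neq 0$ as the distinct elements $\de_1,\ldots,\de_\ell$ with associated vectors $k_1,\ldots,k_\ell\in\N^m\setminus\{0\}$ reproduces exactly the summation range of \eqref{faa}, while the surviving coefficient $\frac{\p^\al f(g(x))}{\al!}\cdot\frac{\al!}{k_1!\cdots k_\ell!}\prod_i c_{\de_i}^{k_i}$ is precisely the summand there.

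The only genuine work is this last bookkeeping: matching the componentwise multinomial expansion with the grouping in the statement, and checking that passing from the multiplicity family $(k_\de)_\de$ to the list $(\de_i,k_i)_{i=1}^\ell$ neither loses nor double-counts terms. The analytic input is entirely standard, since $C^\infty$ guarantees Taylor expansions to every order and only finitely many $\al$ contribute to a fixed $\ga$, so no convergence of the full Taylor series is needed. As an alternative I could argue by induction on $|\ga|$, differentiating \eqref{faa} once more via the Leibniz and chain rules; this avoids Taylor's theorem but demands the same combinatorial identity, now phrased as a recursion for the coefficients, which I expect to be the more delicate route.
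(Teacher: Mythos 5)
Your proof is correct; note, however, that the paper itself contains no proof of this proposition to compare against: it quotes the multivariable Fa\`a di Bruno formula from \cite[Proposition 4.3]{BM04}. Your argument --- extracting the coefficient of $t^\ga$ after substituting the degree-$N$ Taylor polynomial of $g$ into that of $f$ --- is the standard proof of this identity and is essentially the argument behind the cited reference. The three points that genuinely need care are all handled in your write-up: the treatment of the Peano remainders (the $o(|s|^N)$ error becomes $o(|t|^N)$ since $s=O(|t|)$, and every cross term containing an error factor remains $o(|t|^N)$ because the other factors are bounded as $t\to 0$); the observation that $|\de|\ge 1$ forces $|\al|\le N$ for any term contributing to $t^\ga$, so only finitely many terms occur and no convergence of full Taylor series is needed; and the multinomial bookkeeping, where the product of the $m$ one-variable multinomial coefficients collapses to $\al!/\prod_\de k_\de!$, and a multiplicity family $(k_\de)_\de$ with $\sum_\de k_\de=\al$ and $\sum_\de |k_\de|\de=\ga$ corresponds bijectively to a set $\{\de_1,\dots,\de_\ell\}$ of distinct multi-indices together with assigned exponents $k_1,\dots,k_\ell\in\N^m\setminus\{0\}$ --- no term is lost or double-counted since permuting the pairs $(\de_i,k_i)$ simultaneously yields the same family. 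Your proposed alternative by induction on $|\ga|$ would also work but, as you say, requires the same combinatorial identity in recursive form, so the Taylor-composition route is the cleaner one.
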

	
	We also need a version for composition of smooth functions between Banach spaces, cf. \cite[Theorem 1]{Yamanaka89}.

	\begin{proposition}
		\label{banachfdb}
		Let $k \in \N_{\ge 1}$, $E,F,G$ Banach spaces, $U \subseteq E$ and $V \subseteq F$ open subsets, and $f: U \rightarrow F$ and $g: V \rightarrow G$ $C^k$-functions with $f(U) \subseteq V$. Then, for $x \in U$,
		\[
		\frac{(f\circ g)^{(k)}(x)}{k!}= \on{sym_k}\bigg( \sum_{j=1}^k \sum_{\substack{\al \in \N^j_{\ge 1}\\\al_1+\cdots +\al_j =k}} \frac{f^{(j)}(g(x))}{j!} \circ \bigg( \frac{g^{(\al_1)}(x)}{\al_1!} \times \cdots \times \frac{g^{(\al_j)}(x)}{\al_j!}\bigg) \bigg),
		\]
		where $\on{sym_k}$ denotes the symmetrization of $k$-linear mappings.
	\end{proposition}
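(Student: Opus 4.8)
The plan is to reduce the statement to the classical one–parameter Faà di Bruno formula by restriction to lines, and then to pass from the diagonal to the full symmetric multilinear map by polarization. Note first that both sides of the asserted identity are continuous symmetric $k$-linear maps $E^k \to G$: the left-hand side because higher Fréchet derivatives are symmetric, and the right-hand side because it is explicitly symmetrized by $\on{sym}_k$. Since the Banach spaces here are real, a continuous symmetric $k$-linear map $A$ is uniquely determined by its diagonal restriction $v \mapsto A(v,\dots,v)$ (polarization). Hence it suffices to verify that both sides agree when evaluated at $(v,\dots,v)$ for an arbitrary fixed $v \in E$.

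First I would fix $x \in U$ and $v \in E$ and introduce the auxiliary curve $\ga(t) := g(x+tv)$ in $V$, defined for $|t|$ small, together with $h(t) := f(\ga(t)) = (f\o g)(x+tv)$. Iterating the chain rule along the line $t \mapsto x+tv$ yields $h^{(k)}(0) = (f\o g)^{(k)}(x)(v,\dots,v)$ and $\ga^{(m)}(0) = g^{(m)}(x)(v,\dots,v)$ for $1 \le m \le k$; this reduces the left-hand side on the diagonal to the scalar-time derivative $h^{(k)}(0)$.

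Next I would establish, by induction on $k$, the one-parameter Faà di Bruno formula for the composition of the Banach-space map $f$ with the curve $\ga$:
\[
	h^{(k)}(0) = \sum_{j=1}^{k} \sum_{\substack{\be \in \N^j_{\ge 1}\\ \be_1+\cdots+\be_j=k}} \frac{k!}{j!\,\be_1!\cdots\be_j!}\, f^{(j)}(g(x))\big(\ga^{(\be_1)}(0),\dots,\ga^{(\be_j)}(0)\big).
\]
The inductive step differentiates the leading factor $f^{(j)}(\ga(t))$, which produces $f^{(j+1)}(\ga(t))$ via the chain rule, and each argument $\ga^{(\be_i)}(t)$ in turn; the bookkeeping is exactly the recursion for ordered set partitions into nonempty blocks, and it reproduces the coefficient $k!/(j!\,\be_1!\cdots\be_j!)$. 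Here the symmetry of $f^{(j)}$ is what permits summation over ordered tuples $\be$ together with the compensating factor $1/j!$.

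Finally I would substitute $\ga^{(\be_i)}(0) = g^{(\be_i)}(x)(v,\dots,v)$ into the displayed formula and compare, after dividing by $k!$, with the right-hand side of the Proposition evaluated on the diagonal. Since $\on{sym}_k(A)(v,\dots,v) = A(v,\dots,v)$ and the product map $\big(\tfrac{g^{(\al_1)}(x)}{\al_1!}\times\cdots\times\tfrac{g^{(\al_j)}(x)}{\al_j!}\big)$ fed with $(v,\dots,v)$ returns the tuple $\big(\tfrac{g^{(\al_1)}(x)}{\al_1!}(v,\dots,v),\dots,\tfrac{g^{(\al_j)}(x)}{\al_j!}(v,\dots,v)\big)$, the two diagonal expressions coincide term by term. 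Polarization then upgrades this to equality of the full symmetric $k$-linear maps. I expect the main obstacle to be the combinatorial step, namely verifying that the multinomial coefficients generated by the induction match $1/(j!\,\al_1!\cdots\al_j!)$ once the effect of the symmetrization is accounted for; the chain rule along lines and the polarization argument are routine.
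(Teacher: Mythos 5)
Your proof is correct, but it necessarily takes a different route from the paper, because the paper does not prove this proposition at all: it is imported by citation from Yamanaka's inverse function theorem paper (Theorem 1 of that reference). Your argument is therefore a genuine self-contained alternative. The structure is sound: both sides are continuous \emph{symmetric} $k$-linear maps (the left by symmetry of higher Fr\'echet derivatives, the right by the explicit $\on{sym}_k$), so polarization reduces the identity to the diagonal; restriction to the line $t\mapsto x+tv$ turns the left-hand side into $h^{(k)}(0)/k!$ for $h(t)=f(g(x+tv))$; and your one-parameter Fa\`a di Bruno formula with coefficient $k!/(j!\,\beta_1!\cdots\beta_j!)$ over ordered compositions is exactly the set-partition count (for $k=3$ the compositions $(1,2)$ and $(2,1)$ each carry $3/2$ and combine, by symmetry of $f''$, into the classical coefficient $3$), which after division by $k!$ matches the right-hand side of the proposition on the diagonal term by term. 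This reduction trades the multilinear bookkeeping of a direct induction on the Fr\'echet-derivative identity (which is essentially what a proof in the cited source amounts to) for the polarization principle, and it also explains transparently why the statement needs $\on{sym}_k$: the unsymmetrized sum, which groups consecutive arguments into blocks of sizes $\al_1,\dots,\al_j$, is not itself symmetric and cannot equal the left-hand side without symmetrization. Two small points you leave implicit are standard: that $h^{(k)}(0)=(f\circ g)^{(k)}(x)(v,\dots,v)$ for $C^k$ maps, and the inductive verification of the one-parameter coefficients; also note that the proposition's hypotheses as printed ($f:U\to F$, $g:V\to G$, $f(U)\subseteq V$) actually describe the composition $g\circ f$, while the displayed formula treats $g$ as the inner map -- you tacitly adopt the latter reading, which is clearly the intended one.
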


	\subsection{Integrability by seminorm}
	\label{seminorm}
	Integrability by seminorm is a notion of integration for functions with values in locally convex vector spaces.  
	We briefly recall the necessary definitions and theorems. 
	For a detailed treatment we refer to \cite{Blondia81}.

	Let $E$ be a locally convex vector space and $\cP$ the 
	family of continuous seminorms on $E$. Let $I$ be a bounded interval in $\R$. A function $f:I\rightarrow E$ is called 
	\emph{simple} if it is Lebesgue measurable and only takes finitely many values.  
	It is called \emph{measurable by seminorm} if for each $p \in \cP$, there exists a nullset $N_p \subseteq I$ and a 
	sequence $(f^p_n)_{n \in \N}$ of simple functions such that 
	\[
	p\big(f_n^p(t)-f(t)\big) \overset{n \rightarrow \infty}{\longrightarrow} 0 \quad \text{ for all } t \in I \setminus N_p.
	\]
	It is actually enough to require that property for each $p \in \cP_0$, 
	where $\cP_0$ is a subbase of the continuous seminorms. 
	If $N_p$ and $(f_n^p)$ can be chosen independently from $p$, then $f$ is called \emph{strongly measurable}. 
	Finally, $f$ is called \emph{weakly measurable} if $\ell \circ f: I \rightarrow \R$ is Lebesgue measurable 
	for all $\ell \in E'$. 

	\begin{theorem}[{\cite[Theorem 2.2]{Blondia81}}]
		\label{pettis}
		A function $f:I \rightarrow E$ is measurable by seminorm if and only if $f$ is weakly measurable and for each $p \in \cP$ there exists a nullset $N_p \subseteq I$ such that $f(I \setminus N_p)$ is separable.
	\end{theorem}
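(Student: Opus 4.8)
The plan is to reduce the statement to the classical Pettis measurability theorem for Banach-space-valued functions. For each $p \in \cP$ I would form the associated normed space $E_p := E/\ker p$, equipped with the norm $\|\pi_p(x)\|_p := p(x)$, where $\pi_p : E \to E_p$ is the continuous linear quotient map, and pass to its completion $\hat E_p$, which is a Banach space. The key observation is that a sequence $(f_n^p)$ of simple $E$-valued functions satisfies $p(f_n^p(t) - f(t)) \to 0$ if and only if $\|\pi_p(f_n^p(t)) - \pi_p(f(t))\|_p \to 0$. Since $\pi_p$ sends simple $E$-valued functions to simple $E_p$-valued functions and, conversely, every simple $E_p$-valued function lifts to a simple $E$-valued one by selecting preimages of its finitely many values, measurability by seminorm of $f$ is equivalent to strong (Bochner) measurability of $\pi_p \circ f : I \to \hat E_p$ for every $p \in \cP$. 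Both implications would then follow by applying the classical Pettis theorem to each $\pi_p \circ f$ and translating back.

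For the forward implication I would assume $f$ measurable by seminorm and fix $p$ with the associated nullset $N_p$ and simple functions $(f_n^p)$. To get weak measurability, given $\ell \in E'$ I would choose $q \in \cP$ and $C > 0$ with $|\ell| \le C q$; then $|\ell(f_n^q(t)) - \ell(f(t))| \le C\, q(f_n^q(t) - f(t)) \to 0$ off a nullset, exhibiting $\ell \circ f$ as an almost-everywhere limit of measurable simple functions. For separability I would note that the countable value set $V_p := \{f_n^p(t) : t \in I,\ n \in \N\}$ satisfies $\pi_p(f(t)) \in \overline{\pi_p(V_p)}$ for all $t \notin N_p$, so $f(I \setminus N_p)$ is separable for the seminorm $p$.

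For the converse I would assume $f$ weakly measurable and, for each $p$, the existence of a nullset $N_p$ with $f(I \setminus N_p)$ separable for $p$. Fixing $p$, every $\eta \in (E_p)'$ gives $\eta \circ \pi_p \in E'$, so $\eta \circ (\pi_p \circ f)$ is measurable and $\pi_p \circ f$ is weakly measurable into $\hat E_p$; its essential range lies in $\pi_p(f(I \setminus N_p))$, hence is separable. The classical Pettis theorem then yields strong measurability of $\pi_p \circ f$, and I would choose the approximating simple functions to take values in a countable dense subset of its essential range $\pi_p(f(I \setminus N_p)) \subseteq \pi_p(E)$, lifting those finitely many values back to $E$ to obtain simple $E$-valued functions $f_n^p$ with $p(f_n^p(t) - f(t)) \to 0$ a.e.

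The hard part will be the bookkeeping rather than any single estimate: I must ensure that weak measurability and $p$-separability transfer faithfully between $E$ and each quotient $E_p$, and, most delicately, that the simple functions produced by the Banach-space Pettis theorem can be arranged to take values in the genuine range $\pi_p(f(I \setminus N_p)) \subseteq \pi_p(E)$, so that they admit lifts to $E$-valued simple functions. I would also be careful to read \emph{separable} throughout as separability relative to the seminorm $p$ (equivalently, in $E_p$), which is exactly what makes the dependence of $N_p$ on $p$ natural.
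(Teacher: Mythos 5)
Your proposal cannot be compared against an in-paper argument for the simple reason that the paper does not prove this statement at all: it is imported verbatim, with citation, as Theorem 2.2 of \cite{Blondia81}, and the citation is the paper's entire ``proof''. Judged on its own merits, your reduction to the Banach-space Pettis measurability theorem via the quotients $E_p = E/\ker p$ (normed by $\|\pi_p(x)\|_p := p(x)$) and their completions $\hat E_p$ is correct, and it is essentially the natural way to prove such a statement. You also correctly isolate the two places where the argument could silently fail. First, ``separable'' must indeed be read as $p$-separability, i.e.\ separability of $\pi_p(f(I\setminus N_p))$ in $E_p$: with separability in the topology of $E$ the forward implication would be false, since the simple functions $(f_n^p)$ approximate $f$ only in the single seminorm $p$, so $f(I\setminus N_p)$ need only lie in the $p$-closure, not the $E$-closure, of their countable value set. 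Second, the simple functions produced by the classical Pettis theorem take values in $\hat E_p$ and need not lift to $E$; your fix---rerunning the approximation with values in a countable dense subset $D \subseteq \pi_p(f(I\setminus N_p)) \subseteq \pi_p(E)$ of the genuine range---works because, once strong measurability of $\pi_p \circ f$ is known, each function $t \mapsto \|\pi_p(f(t)) - d\|_p$, $d \in D$, is measurable, so the usual first-hit/truncation construction yields finitely-valued measurable approximants with values in $D$, each of which lifts to an $E$-valued simple function by choosing preimages. Your ordering (apply Pettis first, refine the approximants second) avoids any circularity in that step. One minor simplification: in the weak-measurability step you need not hunt for $q \in \cP$ with $|\ell| \le Cq$; since $\cP$ is by definition the family of \emph{all} continuous seminorms on $E$, the seminorm $|\ell|$ itself belongs to $\cP$.
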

	
	For a simple function $f : I \to E$, the integral over a measurable set $J \subseteq I$ is clearly defined as 
	\[
	\int_J f(t)\,dt:= \sum_{y \in E} \la(f^{-1}(\{y\}) \cap J) \cdot y,
	\]
	where $\la$ is the Lebesgue measure on $I$. 
	A function $f : I \to E$ which is measurable by seminorm is called \emph{integrable by seminorm} if
	\begin{gather*}
			\forall p \in \cP,~\forall n \in \N:~p\circ(f_n^p-f) \in L^1(I),\\
			\forall J \subseteq I ~\text{measurable}~\exists F_J \in E~ \forall p \in \cP :~ 
			p\Big(F_J - \int_J f_n^p(t)\,dt\Big) \overset{n \rightarrow \infty}{\longrightarrow} 0;
	\end{gather*}
	in this case $F_J=: \int_J f(t)\,dt$. 
	For a complete locally convex space $E$, integrability of $p\circ f$ 
	for each $p \in \cP$ already implies integrability by seminorm.
	If $E$ is a Banach space, then clearly integrability by seminorm coincides with Bochner integrability and 
	the respective integrals coincide.  
	It is easily seen that for each $p \in \cP$, 
	\begin{equation*}
		p\Big(\int_J f(t)\,dt\Big) \le \int_J p (f(t))\,dt.
	\end{equation*}
	
	Let $\cL^1(I,E)$ consist of functions $f : I \to E$ integrable by seminorm. 
	For a continuous seminorm $p$ on $E$, define $\tilde{p}(f):= \int_I p(f(t)) \,dt$ 
	and let $\tilde{\cP}$ be the family of such seminorms. Then $(\cL^1(I,E),\tilde{\cP})$ 
	is a (non-Hausdorff) locally convex space. 
	For $\cN:= \{f \in \cL^1(I,E): \tilde{p}(f)=0 \text{ for all } \tilde{p} \in \tilde{\cP}\}$, let 
	\[
	L^1(I,E):= \cL^1(I,E) / \cN,
	\]
	which is then clearly also Hausdorff.

	\section{Function spaces and their associated diffeomorphism groups}
	\label{spaces}

	Let us introduce the function spaces considered in this paper.
	We will define spaces $\cA(\R^d,\R)$ of real valued functions for different regularity classes $\cA$, 
	and set $\cA(\R^d,\R^m) := (\cA(\R^d,\R))^m$.
	
	\subsection{Classical spaces of test functions}

		For $p \in [1,\infty]$, we consider 
		\begin{align*}
		W^{\infty,p}(\R^d,\R):&=\bigcap_{k \in \N} W^{k,p}(\R^d)
		=\big\{f \in C^\infty(\R^d,\R): \|f^{(\al)}\|_{L^p}<\infty ~ \forall \al \in \N^d\big\}
		\end{align*}
		and 
		endow it with its natural Fr\'echet topology. 
		For $p = \infty$ we also write $\cB(\R^d,\R):=W^{\infty,\infty}(\R^d,\R)$.
		We consider the Schwartz space 
		\[
		\cS(\R^d,\R) := \big\{f \in C^\infty(\R^d,\R) : \|f\|^{(p,\al)} < \infty \text{ for all } p \in \N, \al \in \N^d \big\},
		\]
		where 
		\[
		\|f\|^{(p,\al)} := \sup_{x \in \R^d}  (1+|x|)^p |f^{(\al)}(x)|,
		\]
		with its natural Fr\'echet topology. 
		We denote by $\cD(\R^d,\R)$ the nuclear (LF)-space of smooth functions on $\R^d$ with compact support.
		
	\subsection{Local Denjoy--Carleman classes}	\label{sec:localDC}
	Let $M=(M_k)$ be a positive sequence. 
	For an open subset $U \subseteq \R^d$ the \emph{Denjoy--Carleman class} of \emph{Beurling} type, 
	denoted by $\CbM(U,\R)$, is the space 
	of $f \in C^\infty(U,\R)$ such that for all compact $K \subseteq U$ and all $\rh>0$ 
	\[
		\|f\|^M_{K,\rh} := \sup_{\al \in \N^d,\, x \in K} \frac{|f^{(\al}(x)|}{\rh^{|\al|}|\al|! M_{|\al|!}} < \infty.
	\] 	
	The Denjoy--Carleman class of \emph{Roumieu} type $\CrM(U,\R)$ is the space of $f \in C^\infty(U,\R)$ 
	such that for all compact $K \subseteq U$ there exists $\rh>0$ with $\|f\|^M_{K,\rh} < \infty$. They are endowed with their 
	natural locally convex topologies.
	We write $\CM$ if we mean either $\CbM$ or $\CrM$.

	Some regularity properties for the sequence $M = (M_k)$ guarantee that the fundamental results of analysis hold true 
	for the classes $\CM$: stability under composition, differentiation, inversion, and solution of ODEs. 
	This applies for sequences which satisfy the following definition. Here we do not strive for the utmost generality, 
	but see \cite{RainerSchindl14} for a characterization of the stability properties.  

	\begin{definition} \label{def:regular}
		We say that a positive sequence $M = (M_k)$ is \emph{regular} if
	 	\begin{enumerate}
			\item $1=M_0 \le M_1 \le M_2 \le \cdots$,
			\item $M$ is log-convex, i.e., $M_k^2 \le M_{k-1}M_{k+1}$ for all $k$, 
			\item $M$ has moderate growth, i.e., there is $C>0$ such that $M_{k+j} \le C^{k+j} M_k M_j$ for all $k,j$. 
		\end{enumerate}
		It is easy to see that for a regular sequence $M$ the Roumieu class $\CrM$ contains the real analytic class $C^\om$.
		In the Beurling case, the inclusion $C^\om \subsetneq \CbM$ is equivalent to 
		\begin{enumerate}
			\item[(4)] $M_k^{1/k}\rightarrow \infty$ as $k \rightarrow \infty$, or equivalently $M_{k+1}/M_k\rightarrow \infty$.
		\end{enumerate}
		A regular sequence $M=(M_k)$ which additionally satisfies (4) is called \emph{strictly regular}.
	 \end{definition} 

	 Evidently, a regular sequence $M=(M_k)$ is \emph{derivation closed}, i.e., there is a $C>0$ such that $M_{k+1} \le C^k M_k$ 
	 for all $k$. We say that a regular sequence $M=(M_k)$ is \emph{quasianalytic} if 
	 \[
	 	\sum_{k=1}^\infty \frac{1}{(k! M_k)^{1/k}} = \infty;
	 \]
	 otherwise it is called \emph{non-quasianalytic}. By the Denjoy--Carleman theorem, $M$ is quasianalytic if and only if 
	 $\CM$ is quasianalytic, i.e., a function $f \in \CM(U)$, where $U$ is a connected open subset of $\R^d$, 
	 is uniquely determined by its Taylor series expansion at any point $a \in U$.

	 For regular sequences $M$ the classes $\CM$ are stable under composition, indeed log-convexity implies the following 
	 inequality (cf.\ \cite[Proposition 4.4]{BM04}) 
	 \begin{equation}
	 	M_1^k \, M_n\ge M_k\, M_{1}^{k_1} \cdots M_{n}^{k_n}, \quad \text{for }k_i\in \N, ~\sum_{i=1}^n i k_i = n, ~\sum_{i=1}^n k_i = k. \label{eq:Childress}
	 \end{equation}
	 For later use we recall the following simple lemma.

	 \begin{lemma}[{\cite[Lemma 2.2]{KrieglMichorRainer14a}}]
		\label{fdbApplication}
		Let $M=(M_k)$ satisfy \eqref{eq:Childress} and let $A>0$. Then there are positive constants $B,C$ depending 
		only on $AM_1$, $m$, and $n$, and $C \to 0$ as $A \to 0$ such that
		\[
		\sum \frac{\al!}{k_1!\cdots k_\ell!} A^{|\al|} M_{|\al|} M_{|\de_1|}^{|k_1|} \cdots M_{|\de_\ell|}^{|k_\ell|} \le B C^{|\ga|} M_{|\ga|}
		\]
		where the sum is as in \Cref{fdb}.
	\end{lemma}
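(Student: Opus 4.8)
The plan is to separate the combinatorial content of the sum from the sequence $M$ by means of \eqref{eq:Childress}, and then to recognise the remaining purely combinatorial factor as a single Taylor coefficient of an explicit rational function. First I would pull the factor $M_{|\ga|}$ out of every summand. Grouping the indices according to the value of $|\de_i|$, set $k^{(j)}:=\sum_{i:\,|\de_i|=j}|k_i|$; then $\sum_j k^{(j)}=\sum_i|k_i|=|\al|$ and $\sum_j j\,k^{(j)}=\sum_i|\de_i|\,|k_i|=|\ga|$, while $\prod_i M_{|\de_i|}^{|k_i|}=\prod_j M_j^{k^{(j)}}$. Applying \eqref{eq:Childress} with $k=|\al|$, $n=|\ga|$ and the partition $(k^{(j)})_j$ yields
\[
M_{|\al|}\,M_{|\de_1|}^{|k_1|}\cdots M_{|\de_\ell|}^{|k_\ell|}\le M_1^{|\al|}\,M_{|\ga|}.
\]
Since every summand is nonnegative, the claim reduces to an estimate $\Sigma\le B\,C^{|\ga|}$ for the constant-coefficient sum $\Sigma:=\sum\frac{\al!}{k_1!\cdots k_\ell!}\,a^{|\al|}$ with $a:=AM_1$, the index set being exactly that of \Cref{fdb}.

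Next I would identify $\Sigma$ with a derivative of an explicit map by using \Cref{fdb} in reverse. Choosing $f\in C^\infty(\R^m,\R)$ and $g\in C^\infty(\R^n,\R^m)$ with $f(y)=\prod_{s=1}^m(1-a y_s)^{-1}$ and $g_s(x)=\prod_{t=1}^n(1-x_t)^{-1}-1$ for $1\le s\le m$, one checks at the origin that $g(0)=0$, that $\p^{\de}g_s(0)/\de!=1$ for every $\de\neq0$, and that $\p^\al f(0)/\al!=a^{|\al|}$. Substituting these values into \eqref{faa} collapses each summand to exactly $\tfrac{\al!}{k_1!\cdots k_\ell!}a^{|\al|}$, whence
\[
\Sigma=\frac{\p^\ga(f\o g)(0)}{\ga!},\qquad (f\o g)(x)=\Big(1+a-a\prod_{t=1}^n(1-x_t)^{-1}\Big)^{-m}.
\]
Crucially $f\o g$ is holomorphic near $0$ with nonnegative Taylor coefficients, so $\Sigma$ is precisely the coefficient of $x^\ga$ in its power series.

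Finally I would estimate this coefficient by evaluating on the real diagonal inside the domain of convergence. For $0<\rho<1$ with $a(1-\rho)^{-n}<1+a$ the function is finite at $(\rho,\dots,\rho)$, and nonnegativity of the coefficients gives
\[
\Sigma\le \rho^{-|\ga|}\,(f\o g)(\rho,\dots,\rho)=\rho^{-|\ga|}\Big(1+a-a(1-\rho)^{-n}\Big)^{-m}.
\]
Choosing $\rho=\rho(a)$ so that $a(1-\rho)^{-n}$ equals a fixed fraction of $1+a$ keeps the second factor bounded by a constant $B$ depending only on $m$, and produces $\Sigma\le B\,C^{|\ga|}$ with $C=\rho(a)^{-1}$; together with the reduction above this is the asserted bound, the constants depending only on $a=AM_1$, $m$, and $n$. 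The delicate point, and the step I expect to be the main obstacle, is the quantitative behaviour of the constants as $A\to0$: the geometric rate $C$ is governed by the radius of convergence of $f\o g$, which is pinned to the singular locus $\prod_t(1-x_t)^{-1}=1+\tfrac1a$, so $C$ must be controlled through the dependence $\rho(a)$, and the vanishing as $A\to0$ has to be extracted by isolating the lowest-order contribution in $a$, namely the single tuple $\ell=1,\ \de_1=\ga,\ |k_1|=1$, which contributes precisely $m\,a$.
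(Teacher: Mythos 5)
The paper itself offers no proof of this lemma --- it is quoted verbatim from \cite[Lemma 2.2]{KrieglMichorRainer14a} --- so your argument can only be measured against the statement itself. Up to the final clause, your proof is sound and quite elegant: grouping the multiplicities as $k^{(j)}=\sum_{i:\,|\de_i|=j}|k_i|$ and applying \eqref{eq:Childress} with $k=|\al|$, $n=|\ga|$ correctly strips off the sequence $M$ at the cost of $M_1^{|\al|}M_{|\ga|}$; the choice $f(y)=\prod_{s}(1-ay_s)^{-1}$, $g_s(x)=\prod_{t}(1-x_t)^{-1}-1$ makes every summand of \eqref{faa} collapse to $\frac{\al!}{k_1!\cdots k_\ell!}a^{|\al|}$, so $\Sigma$ is the coefficient of $x^\ga$ in $\bigl(1+a-a\prod_t(1-x_t)^{-1}\bigr)^{-m}$; and since all coefficients are nonnegative, evaluation at $(\rho,\dots,\rho)$ gives $\Sigma\le\rho^{-|\ga|}\bigl(1+a-a(1-\rho)^{-n}\bigr)^{-m}$, hence $\Sigma\le B\,C^{|\ga|}$ with $B,C$ depending only on $a=AM_1$, $m$, $n$.

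The genuine gap is the clause ``$C\to 0$ as $A\to 0$'', which you rightly single out as the obstacle but then propose to rescue by ``isolating the lowest-order contribution''. That route cannot work, because that very contribution refutes the clause: for every $\ga\neq 0$ the sum contains the terms $\ell=1$, $\de_1=\ga$, $k_1$ a unit vector of $\N^m$, which contribute $m\,AM_1M_{|\ga|}$; a bound $B\,C^{|\ga|}M_{|\ga|}$ valid for all $\ga$ with $B$ independent of $\ga$ therefore forces $C\ge 1$, since otherwise $B\,C^{|\ga|}\to 0$ as $|\ga|\to\infty$. Your generating function confirms this exactly: for $n=m=1$ one has $\bigl(1+a-a(1-x)^{-1}\bigr)^{-1}=\frac{1-x}{1-(1+a)x}$, so $\Sigma=a(1+a)^{|\ga|-1}$ and no admissible $C$ below $1+a$ exists. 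What your method does yield, after the small twist of subtracting the constant term of the series (legitimate since every coefficient with $\ga\neq0$ carries a factor $a$, as $|\al|\ge1$), is the corrected statement: for fixed $\rho$ with $a\bigl((1-\rho)^{-n}-1\bigr)<1$ and all $\ga\neq 0$,
\[
\Sigma\le\rho^{-|\ga|}\Bigl(\bigl(1-a\bigl((1-\rho)^{-n}-1\bigr)\bigr)^{-m}-1\Bigr),
\]
i.e.\ the lemma holds with $C$ bounded and \emph{$B\to 0$ as $A\to 0$}. That is the version you should state and prove; it suffices for \Cref{claim:cont} in \Cref{MinfRoumieu}, but note that the clause as printed is not harmless for the paper, since it is exactly what is invoked in the proof of \Cref{BeurlingSobolev} to make $\si$ arbitrarily small, an argument that would need repair (e.g.\ by also making $\la$ small) under the corrected statement.
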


	 We refer to \cite{KMRc}, \cite{KMRq}, \cite{KMRu}, \cite{RainerSchindl14}, and \cite{RainerSchindl16a} 
	for a detailed exposition of the connection between these conditions on $M=(M_k)$
	and the properties of $\CM$.

	For regular sequences $M=(M_k)$, the classes $C^{[M]}$ can be extended to \emph{convenient vector spaces} 
	(i.e.\ Mackey-complete locally convex spaces, cf.\ \cite{KM97}), and they then form cartesian closed categories.
This has been developed in \cite{KMRc}, \cite{KMRq}, and \cite{KMRu}.

    \subsection{Ultradifferentiable spaces of test functions}		
    Let $M=(M_k)$ be a positive sequence. 
    For $p \in [1,\infty]$ and $\si>0$, we consider the Banach space
	\[
		W^{M,p}_\si(\R^d, \R):= \big\{ f \in C^\infty(\R^d, \R): \|f\|^{M,p}_{\si}<\infty \big\},
		\]
		where
		\[
		\|f\|^{M,p}_{\si}:= \sup_{\al \in \N^d} \frac{\|f^{(\al)}\|_{L^p}}{\si^{|\al|} |\al|! M_{|\al|}}.
		\]
		The corresponding \emph{Beurling} class
		\[
		W^{(M),p}(\R^d, \R) := \varprojlim_{n \in \N} W^{M,p}_{1/n}(\R^d,\R),
		\]
		is a Fr\'echet space. The corresponding \emph{Roumieu} class
		\[
		W^{\{M\},p}(\R^d, \R) := \varinjlim_{n \in \N} W^{M,p}_{n}(\R^d,\R),
		\]
		is a compactly regular (LB)-space, 
		see \cite[Lemma 4.9]{KrieglMichorRainer14a}. 
		By writing $W^{[M],p}(\R^d,\R)$ we mean either $W^{(M),p}(\R^d,\R)$ or $W^{\{M\},p}(\R^d,\R)$. 
		For $p=\infty$, we also use $\cB^{[M]}(\R^d,\R):= W^{[M],\infty}(\R^d,\R)$
		and
		\[
			\|f\|^{M}_{\si} := \|f\|^{M,\infty}_{\si}.
		\]

		\begin{remark}
			The definition of $\BM$-mappings makes sense between arbitrary infinite dimensional Banach spaces $E$ and $F$, or even an open subset $U \subseteq E$ of the domain. Then
		\[
		\|f\|^M_{U,\si}:= \sup_{k \in \N, \,x \in U} \frac{\|f^{(k)}(x)\|_{L_k(E,F)}}{\si^k k!M_k},
		\]  
		where $f^{(k)}$ denotes the $k$-th Fr\'echet derivative. The corresponding Beurling and Roumieu classes are defined 
		analogously to the finite dimensional case. In the finite dimensional case both definitions yield the same function 
		spaces and the respective norms are equivalent.
		\end{remark}

		Let $L=(L_k)$ be another positive sequence.
		We consider the Banach space 
		\[
		\cS_{L,\si}^M(\R^d,\R) := \big\{f \in C^\infty(\R^d,\R) : \|f\|_{\si}^{L,M} < \infty\big\}.
		\]
		with the norm
		\[
		\|f\|_{\si}^{L,M} := \sup_{\substack{p \in \N,\, \al \in \N^d\\ x \in \R^d}}  \frac{(1+|x|)^p |f^{(\al)}(x)|}{\si^{p+|\al|}\, p!|\al|!\, L_p M_{|\al|}}.
		\]
		The associated \emph{Gelfand--Shilov} class of Beurling type is the 
		Fr\'echet space 
		\[
		\SbLM(\R^d,\R) := \varprojlim_{n \in \N} \cS^M_{L,1/n}(\R^d,\R).
		\]
		The Gelfand--Shilov class of Roumieu type
		\[
		\SrLM(\R^d,\R) := \varinjlim_{n \in \N} \cS^M_{L,n}(\R^d,\R)
		\]
		is a compactly regular (LB)-space, 
		see \cite[Lemma 4.9]{KrieglMichorRainer14a}. By writing $\SLM(\R^d,\R)$ we mean
		 either $\SbLM(\R^d,\R)$ or $\SrLM(\R^d,\R)$.

		Finally,
		we define 
		\[
		\DM(\R^d,\R) := \CM(\R^d,\R) \cap \cD(\R^d,\R) = \BM(\R^d,\R) \cap \cD(\R^d,\R) 
		\]
		which is non-trivial only if $M=(M_k)$ is non-quasianalytic.
		We equip $\DM(\R^d,\R)$ with the following topology,
		\begin{align*}
		\cD^{[M]}(\R^d,\R) &= \varinjlim_{K \Subset \R^d} \cD^{[M]}_K(\R^d,\R)  
		\end{align*}
		where
		\begin{align*}
		\cD^{\bM}_K(\R^d,\R) :=  \varprojlim_{\ell \in \N} \cD^M_{K, 1/\ell}(\R^d,\R), \quad
		\cD^{\rM}_K(\R^d,\R) :=  \varinjlim_{\ell \in \N} \cD^M_{K,\ell}(\R^d,\R)
		\end{align*}
		and
		\[
		\cD^M_{K,\rh}(\R^d,\R) := \big\{f \in C^\infty(\R^d,\R) : \on{supp} f \subseteq K,\, \|f\|_{\rh}^{M} < \infty\big\}
		\]
		is a Banach space.
		Then $\cD^{\bM}(\R^d)$ is a (LFS)-space and $\cD^{\rM}(\R^d)$ is a Silva space, see \cite{Komatsu73}.

		\begin{remark}
		For $f = (f_1,\ldots,f_m) \in W^{M,p}_{\si}(\R^d,\R^m)$, we also sometimes write $\|f\|^{M,p}_{\si}$ and actually mean 
		$\max_{1\le i \le m} \|f_i\|^{M,p}_{\si}$; similarly, for the other aforementioned function spaces. 
		If domain and codomain are clear from the context, we sometimes omit mentioning them explicitly, 
		e.g., $W^{[M],p}$ instead of $W^{[M],p}(\R^d, \R^m)$.
		\end{remark}

		The next diagram taken from \cite{KrieglMichorRainer14a} describes the mutual inclusion relations 
		of the above spaces.			
		For $1 \le p<q <\infty$, we have the following continuous inclusions: 
			\[
			\xymatrix{
				\cD~ \ar@{{ >}->}[r] & \cS~ \ar@{{ >}->}[r] & \Sp~ \ar@{{ >}->}[r] & W^{\infty,q}~ \ar@{{ >}->}[r] 
				& \cB~ \ar@{{ >}->}[r] & C^\infty \\
				\DrM~ \ar@{{ >}->}[r] \ar@{{ >}->}[u] & \SrLM~ \ar@{{ >}->}[r] \ar@{{ >}->}[u] 
				& \SprM~ \ar@{{ >}->}[r]^{*} \ar@{{ >}->}[u] & W^{\rM,q}~ \ar@{{ >}->}[r]^{*} \ar@{{ >}->}[u] 
				& \BrM~ \ar@{{ >}->}[u] \ar@{{ >}->}[r] & \CrM~ \ar@{{ >}->}[u]\\
				\DbM~ \ar@{{ >}->}[r] \ar@{{ >}->}[u] &\SbLM~ \ar@{{ >}->}[r] \ar@{{ >}->}[u] 
				& \SpbM~ \ar@{{ >}->}[r]^{*} \ar@{{ >}->}[u] 
				& W^{\bM,q}~ \ar@{{ >}->}[r]^{*} \ar@{{ >}->}[u] 
				& \BbM~ \ar@{{ >}->}[u] \ar@{{ >}->}[r] & \CbM~ \ar@{{ >}->}[u]
			}
			\]  
			For the inclusions marked by $*$ we assume that $M=(M_k)$ is derivation closed. 
			If the target is $\R$ (or $\C$) then all spaces are algebras, provided that $(k!M_k)$ is log-convex in the 
			weighted cases,
			and each space in 
			\[
			\xymatrix{
				\cD(\R^d)~ \ar@{{ >}->}[r] & \cS(\R^d)~ \ar@{{ >}->}[r] & \Sp(\R^d)~ \ar@{{ >}->}[r] 
				& W^{\infty,q}(\R^d)~ \ar@{{ >}->}[r] 
				& \cB(\R^d) 
			}
			\]
			is a $\cB(\R^d)$-module, and thus an ideal in each space on its right. Likewise 
			each space in 
			\[
			\xymatrix{
				\DM(\R^d) \ar@{{ >}->}[r] & \SLM(\R^d) \ar@{{ >}->}[r] & \SpM(\R^d) \ar@{{ >}->}[r] 
				& W^{[M],q}(\R^d) \ar@{{ >}->}[r] 
				& \BM(\R^d) 
			}
			\]
			is a $\BM(\R^d)$-module, and thus an ideal in each space on its right.

		\subsection{Associated diffeomorphism groups} \label{sec:diffeo}
		Let $\cA$ be any of the classes $\cB,W^{\infty,p}$, $\cS$, $\cD$, $\BM$, $W^{[M],p}$, $\SLM$, $\DM$. 
		Suppose that $M=(M_k)$ is a regular sequence, resp.\ strictly regular in the Beurling case, 
		and let $L=(L_k)$ be a sequence with $L_k\ge 1$ for all $k$.

		In \cite{KrieglMichorRainer14a} and \cite{MichorMumford13} (see also \cite{KrieglMichorRainer16}) it was shown that
		\begin{align*}
			\on{Diff} \cA:&= \big\{\Ph \in \Id+\cA(\R^d,\R^d): \inf_{x \in \R^d}\det d\Ph(x) > 0 \big\}\\
			&= \big\{\Ph \in \Id+\cA(\R^d,\R^d): \Ph ~\text{is bijective}, ~ \Ph^{-1}\in \Id + \cA \big\}
		\end{align*}
		is a manifold modelled on the open subset $\{\Ph-\Id: \Ph \in \on{Diff}\cA$\} of the convenient vector space 
		$\cA(\R^d,\R^d)$ with global chart $\Ph \mapsto \Ph -\Id$ and actually that it is a smooth (resp. $C^{[M]}$) regular Lie group.
		We have $C^\infty$ injective group homomorphisms 
\[
      \xymatrix{
        \Diff\cD \ar@{{ >}->}[r] & \Diff\cS \ar@{{ >}->}[r] & \Diff W^{\infty,p} \ar@{{ >}->}^{\quad p<q}[r] & \Diff W^{\infty,q} \ar@{{ >}->}[r] & \Diff\cB 
      }
 \]
 		and $\CM$ injective group homomorphisms
 		\[
      \xymatrix{
        \Diff\DM \ar@{{ >}->}[r] & \Diff\SLM \ar@{{ >}->}[r] & \Diff\SpM \ar@{{ >}->}^{~ p<q}[r] & \Diff W^{[M],q} 
        \ar@{{ >}->}[r] & \Diff\BM,
      }
\]
  where each group is a normal subgroup of the groups on its right.

		Our main goal is now to give a different description of $\on{Diff}\cA$ in terms of the so-called Trouv\'e group 
		from image analysis, cf.\ \cite{Trouve95} and \cite{Younes10}. This is outlined in the next section.

	\section{Diffeomorphism groups generated by time-dependent vector-fields}
	\label{sec:diffgroups}

	\subsection{ODE-closedness and the Trouv\'e group.}
		Let $I$ be some interval and $u$ a time-dependent vector field $u:I\times \R^d \rightarrow \R^d$ sufficiently regular, 
		e.g., continuous in $t$ and Lipschitz continuous in $x$ with $t$-integrable global Lipschitz constant, to uniquely solve
		\begin{equation}
		\label{ode}
		x(t) = x_0 + \int_{s_0}^t u(s,x(s))\,ds
		\end{equation}
		for all $s_0,t \in I$ and $x_0 \in \R^d$.\par

		Then one considers, for $t_0 \in I$, the \emph{flow} $\Ph_u(s_0,t_0, x_0) := x(t_0)$ of $u$ at time $t_0$. 
		From now on we assume, unless otherwise stated, that $I = [0,1]$ and $s_0 = 0$ and simply write $\Ph_u(t_0,x_0)$ 
		instead of $\Ph_u(0,t_0,x_0)$. In addition, we set $\ph_u(t_0,x_0):= \Ph_u(t_0,x_0)-x_0$. \par 

		In \cite{Younes10} it was shown that for $u^\vee \in L^1(I, C^1_0(\R^d,\R^d))$ the regularity with respect to 
		the spatial variable is transferred to the flow in the sense that $\Ph_u(t_0, \cdot) \in \Id+C^1_0(\R^d,\R^d)$. 
		In addition, $\Ph_u(t_0,\cdot)^{-1}$ exists and is again an element of $\Id+C^1_0(\R^d,\R^d)$. 
		This yields a way of constructing a multitude of diffeomorphisms of a certain type, 
		namely such of the form $\Id+C^1_0(\R^d,\R^d)$.\par
		
		This way of regularity permanence with respect to ODEs is now generalized to arbitrary locally convex spaces $E$ 
		of maps $\R^d \to \R^d$ and dubbed \emph{ODE-closedness}. To be more precise, let $\cF$ be a family of functions $\cF \subseteq E^I$ containing all constant functions. Then $E$ is called \emph{$\cF$-ODE-closed} if $\ph_{u^\wedge}$ exists and  
		\begin{equation*}
			\ph_{u^\wedge} (t, \cdot) \in E \text{ for all } u \in \cF,~t\in I.
		\end{equation*}
		If $E$ is a Banach space, it is natural to choose $\cF$ to be the family of Bochner integrable functions $I \to E$. 
		For general $E$, among the sensible choices for $\cF$ are functions integrable by seminorm, cf.\ 
		\Cref{seminorm}.\par
		If $\cF$ is a vector space and for $u,v \in \cF$ the functions $w_1, w_2$ defined by 
		\[
			w_1 (t):= 
			\begin{cases}
			u(2t) & \text{ if } t \in [0,1/2]\\
			v(2t-1) & \text{ if } t \in (1/2,1]
			\end{cases}
			\quad \text{ and  } \quad w_2(t):=u(1-t)
		\]
		again belong to $\cF$, 
		then it is not hard to see that
		\[
		\cG_\cF:= \left\{ \Ph_{u^ \wedge}(1,\cdot):u \in \cF \right\}
		\]
		is a group with respect to composition, the so-called \emph{Trouv\'e group}; cf.\ \cite{Trouve95} and \cite{Younes10}. 
		In general not much is known about the Trouv\'e group. 
		Clearly if $E$ is $\cF$-ODE-closed, then $\cG_\cF \subseteq \Id +E$.

		\begin{remark}
		When it does not lead to confusion, we omit writing $(\cdot)^\vee$ and $(\cdot)^\wedge$, e.g., for $u:I\rightarrow E \subseteq (\R^d)^{\R^d}$ we simply write $u(t,x)$ instead of $u^\wedge(t,x)$. Similarly for functions of several variables $f: X_1\times X_2 \rightarrow Y$, by writing $f(x_2)$, we actually mean $f(\cdot,x_2):X_1 \rightarrow Y$. This notational inaccuracy will mainly occur, when writing $\Ph_u(t)$ instead of $\Ph_u(t,\cdot)$.
		\end{remark}

		\subsection{Admissible vector fields}  \label{setup}

		Let $\cA$ be any of the classes $W^{\infty,p}$, $\cS$, $\cD$, $W^{[M],p}$, $\SLM$, $\DM$, where $p \in [1,\infty]$
		(in particular, the cases $\cB$ and $\BM$ are included). 
		Suppose that $M=(M_k)$ is a regular sequence, resp.\ strictly regular in the Beurling case, 
		and let $L=(L_k)$ be a sequence with $L_k\ge 1$ for all $k$. 

		We will show that the Trouv\'e groups for the classes $\cA$ (with suitable choices of $\cF$) 
		coincide with the connected component of the identity in $\on{Diff}\cA$ which we denote by $\on{Diff}_0\cA$. 
		By the results of \cite{KrieglMichorRainer14a} and \cite{MichorMumford13}, see \Cref{sec:diffeo}, 
		we conclude that these Trouv\'e groups have a natural regular Lie group structure. 
		So let us now fix $\cF_{\cA}$ for each $\cA$ mentioned above and  
		write $\cG_{\cA}$ instead of $\cG_{\cF_\cA}$ for the corresponding Trouv\'e group.\par

		For $\cA \in \{W^{\infty,p}, \cS, W^{(M),p}, \cS^{(M)}_{(L)}\}$, 
		when $E:=\cA(\R^d,\R^d)$ is a Fr\'echet space, 
		let $\cF_\cA := L^1(I,E)$ in the sense of \Cref{seminorm}. In particular, this means, for $u \in \cF_\cA$,
		\begin{align}
			\label{infSobolev}
			\forall \al \in \N^d : \int_0^1 \|\p^\al_x u(t)\|_{L^p}\,dt<\infty, &\quad \text{ if }  \cA=W^{\infty,p},
			\\ 
			\label{Schwartz}
			\forall p \in \N,~\forall \al \in \N^d : \int_0^1 \|u(t)\|^{(p,\al)}\,dt <\infty, 
			&\quad \text{ if }  \cA=\cS,
			\\
			\label{ultraBeurlingSobolev}
			\forall \si > 0: \int_0^1 \|u(t)\|^{M,p}_{\si}\,dt < \infty,
			&\quad \text{ if }  \cA=W^{(M),p},
			\\
			\label{BeurlingGelfandShilov}
			\forall \si > 0: \int_0^1 \|u(t)\|^{L,M}_\si \, dt < \infty, 
			&\quad \text{ if }  \cA=\SbLM.
		\end{align}

		In the cases $\cA \in \{ W^{\{M\},p}, \SrML\}$, we take $\cF_\cA$ to be 
		the set of those $u \in L^1(I,E)$ which factor into some step of the inductive limit which represents $E$
		and are (Bochner-)integrable therein. In particular,  
		\begin{align}
		\label{ultraRoumieuSobolev}
			\exists \si > 0: \int_0^1 \|u(t)\|^{M,p}_{\si} \,dt < \infty, &\quad \text{ if }  \cA=W^{\{M\},p},
		\\
		\label{ultraRoumieuGelfand}
			\exists \si > 0: \int_0^1 \|u(t)\|^{L,M}_\si \,dt < \infty, &\quad \text{ if }  \cA=\SrLM.
		\end{align}

		For the compactly supported classes $\cA \in\{\cD, \cD^{(M)}\}$ we take $\cF_\cA$ to be the class of functions that map into some $\cD_K(\R^d,\R^d)$, resp. $\cD^{(M)}_K(\R^d,\R^d)$, and are integrable by seminorm therein. Then 
		there exists a compact subset $K \subseteq \R^d$ such that $\supp u(t) \subseteq K$ for all 
		$t \in [0,1]$, and 
		\begin{align}
			\label{compsupp}
			\forall \al \in \N^d: \int_0^1 \|\p^\al_x u(t)\|_{L^\infty}\,dt < \infty, &\quad \text{ if }  \cA=\cD,
		\\
			\label{bcompsupp}
			\forall \si > 0: \int_0^1 \|u(t)\|^{M}_{\si}\,dt < \infty, &\quad \text{ if }  \cA=\DbM.
		\end{align}		

		Finally in the Roumieu case $\cD^{\{M\}}$, let $\cF_{\cD^{\{M\}}}$ be the class of functions that map into some 
		$\cD^M_{K,\si}(\R^d,\R^d)$ and are Bochner integrable therein. Then
		there exists a compact subset $K \subseteq \R^d$ such that $\supp u(t) \subseteq K$ for all 
		$t \in [0,1]$ and
		\begin{equation}
			\label{rcompsupp}
			 \exists \si >0 :
			\int_0^1 \|u(t)\|^{M}_{\si}\,dt < \infty, \quad \text{ if }  \cA=\DrM.
		\end{equation}
		
		\begin{remark}
			\label{measurable}
			Since point evaluation is continuous on all spaces $\cA(\R^d,\R^d)$, we get, by  
			\Cref{pettis}, that the function
			\[
			I \rightarrow \R^d,~s \mapsto u(s,x), 
			\] 
			is measurable for all $x \in \R^d$. Then it is straightforward to check that for all $\ph \in C(I,\R^d)$ and $x \in \R^d$, the mapping
			\[
			I \rightarrow \R^d, ~ s \mapsto u(s, x+\ph(s)),
			\]
			is measurable; see \cite[Lemma 2.2]{AulbachWanner96} for a detailed argument.
		\end{remark}

		\subsection{Main result}
		
		\begin{theorem}
			\label{maintheorem1}
			For all classes $\cA$ and corresponding $\cG_\cA$ introduced in \Cref{setup}, 
			\[
			\cG_\cA = \on{Diff}_0\cA.
			\]
			For each time-dependent vector field $u \in \cF_\cA$ the flow $t \mapsto \ph_u(t) \in \cA$ 
			is continuous.
 		\end{theorem}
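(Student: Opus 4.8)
The plan is to prove the two inclusions $\cG_\cA \subseteq \on{Diff}_0\cA$ and $\on{Diff}_0\cA \subseteq \cG_\cA$ separately, establishing the continuity assertion on the way so that it can be reused to obtain connectedness. The heart of the matter is an \emph{ODE-closedness} estimate: for $u \in \cF_\cA$ the flow $\Ph_u(t,\cdot) = \Id + \ph_u(t)$ exists and satisfies $\ph_u(t) \in \cA$ for every $t$, with the defining seminorms controlled by the time integral of the seminorms of $u$. Existence and uniqueness of a Carath\'eodory flow with $\Ph_u(t,\cdot) \in \Id + C^1_b$ follow from the embedding $\cA \hookrightarrow C^1_b$ together with the measurability recorded in \Cref{measurable}. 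For the spatial regularity I would differentiate the fixed-point equation \eqref{ode}; the first derivative $\p_x\Ph$ then solves a linear integral equation controlled by Gronwall's inequality, and higher derivatives are handled inductively on $|\al|$: by \Cref{fdb} (respectively \Cref{banachfdb}) the derivative $\p^\al_x\ph_u(t)$ satisfies an ODE whose inhomogeneity is a Fa\`a di Bruno sum of terms $(\p^\be u)\circ\Ph$ multiplied by products of lower-order derivatives $\p^\de\Ph$.

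For the weighted classes I would sum these contributions against the appropriate weights, using the log-convexity inequality \eqref{eq:Childress} together with \Cref{fdbApplication} to absorb the combinatorial factors into a term of the form $B\,C^{|\ga|}M_{|\ga|}$; a Gronwall argument in the weighted seminorm $\|\cdot\|^M_\si$ (respectively $\|\cdot\|^{M,p}_\si$) then closes the induction and yields $\ph_u(t)\in\cA$ with an explicit bound. For the $L^p$-based spaces the extra difficulty is that one must estimate $L^p$-norms of the derivatives rather than sup-norms; here I would invoke \Cref{SobolevInequality} to bound the sup-norms of the derivatives of $u$ and of $\Ph$ occurring inside the compositions, and the change-of-variables formula (with the Jacobian of $\Ph$ bounded above and below) to convert $\|(\p^\be u)\circ\Ph\|_{L^p}$ back into a multiple of $\|\p^\be u\|_{L^p}$. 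This combined weighted-and-$L^p$ iteration is the technical core, and the step I expect to be the main obstacle.

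Granting the estimate, continuity of $t\mapsto\ph_u(t)$ into $\cA$ follows from the identity $\ph_u(t)-\ph_u(t_0)=\int_{t_0}^t u(s,\Ph(s,\cdot))\,ds$ and dominated convergence, since by the previous step each relevant seminorm of the integrand is dominated by an $L^1(I)$-function; this is precisely the second assertion of the theorem. Consequently $t\mapsto\Ph_u(t,\cdot)$ is a continuous path in $\on{Diff}\cA$ joining $\Id$ to $\Ph_u(1,\cdot)$. That $\Ph_u(1,\cdot)$ really belongs to $\on{Diff}\cA$ follows because $\ph_u(1)\in\cA$ and Liouville's formula $\det d\Ph_u(t,x)=\exp\int_0^t(\on{div} u)(s,\Ph(s,x))\,ds$ supplies the uniform lower bound $\inf_x\det d\Ph_u(1,x)\ge\exp\big(-\int_0^1\|\on{div} u(s)\|_{L^\infty}\,ds\big)>0$. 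Hence $\Ph_u(1,\cdot)\in\on{Diff}_0\cA$, proving $\cG_\cA\subseteq\on{Diff}_0\cA$.

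For the reverse inclusion I would use the open-subgroup trick. Since $\cF_\cA$ is closed under the concatenation and time-reversal operations $w_1,w_2$, the set $\cG_\cA$ is a subgroup of $\on{Diff}\cA$, so it suffices to show it contains a neighborhood of $\Id$. Given $\Ph=\Id+f\in\on{Diff}\cA$ with $f\in\cA$ small enough that $\|df\|_{L^\infty}<1$, the straight-line path $\Ph_t:=\Id+tf$ stays in $\on{Diff}\cA$ (its Jacobian is positive and uniformly bounded below), and it is the flow of the time-dependent field $u(t,\cdot):=f\circ\Ph_t^{-1}$. Stability of $\cA$ under right composition with elements of $\on{Diff}\cA$, which is part of the regular Lie group structure of \cite{KrieglMichorRainer14a,MichorMumford13}, gives $u(t)\in\cA$, while $t\mapsto u(t)$ is continuous into $\cA$ — with support contained in a fixed compact set in the compactly supported cases, and factoring through a single Banach step in the (LB)-cases — so that $u\in\cF_\cA$. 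Therefore $\Ph\in\cG_\cA$, and $\cG_\cA$ is an open, hence also closed, subgroup of $\on{Diff}\cA$; being contained in the connected set $\on{Diff}_0\cA$ and containing $\Id$, it must equal $\on{Diff}_0\cA$.
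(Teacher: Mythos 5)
Your global architecture matches the paper's quite closely. For $\on{Diff}_0\cA\subseteq\cG_\cA$, your straight-line paths $\Id+tf$ generated by $u(t)=f\circ(\Id+tf)^{-1}$, with $u\in\cF_\cA$ justified by the Lie group structure of $\on{Diff}\cA$ and factorization through a step in the (LB)-cases, is essentially the paper's \Cref{maintheorem}; your open-subgroup argument and the paper's polygon argument are interchangeable. Your Liouville-formula bound $\det d\Ph_u(t,x)=\exp\int_0^t(\on{div}u)(s,\Ph_u(s,x))\,ds$ is a legitimate (and arguably cleaner) substitute for the paper's proof of ($\bullet\bullet$) via the reversed flow, and it is valid in the Carath\'eodory setting. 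The problem lies in the converse inclusion, at exactly the step you yourself flag as the main obstacle.

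The weighted Gronwall induction, as you describe it, does not close, because every application of Fa\`a di Bruno loses a factor in the geometric parameter. Concretely: with induction hypothesis $\sup_t\|\p^\de\ph_u(t)\|_{L^\infty}\le C\la^{|\de|}\,\de!\,M_{|\de|}$ for $1\le|\de|<|\ga|$ and $\int_0^1\|u(t)\|^M_\rho\,dt<\infty$, the estimate via \eqref{eq:Childress} and \Cref{fdbApplication} bounds the inhomogeneity at order $\ga$ by $\mathrm{const}\cdot(d\la\ta)^{|\ga|}\,\ga!\,M_{|\ga|}$, where $\ta$ depends on $d\rho(C+1)M_1$; since $\rho$ is dictated by $u$ and the identity part of $\Ph_u$ forces $C+1\ge 1$, one cannot arrange $\ta\le 1$, and Gronwall then returns the bound with parameter $d\la\ta$ rather than $\la$ — so there is no single $\si$ in which ``Gronwall in $\|\cdot\|^M_\si$'' can run, and the induction target $C\la^{|\ga|}$ fails for large $|\ga|$. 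This loss is visible in the paper itself: the identical computation (the proof of \Cref{claim:cont} inside \Cref{MinfRoumieu}) yields continuity only into the \emph{larger} step $\cB^M_\si$ with $\si=d\la\ta$, which is harmless there precisely because uniform boundedness in some $\cB^M_\la$ has already been secured by a different mechanism: the $\cB^{\{M\}}$ implicit function theorem of Yamanaka applied to $S(\ph,x)=\ph-T(\ph,x)$ on a short time interval chosen via \eqref{deltachoice}, followed by patching in time using composition stability (\Cref{gluing}). To rescue your direct induction you would need an additional smallness device (e.g.\ a spatial rescaling $x\mapsto Rx$ shrinking $\rho$, combined with short-time intervals and patching), none of which appears in your sketch. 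Two secondary points: your dominated-convergence continuity argument, like the paper's, lands only in an enlarged Roumieu step, so the Beurling cases require the extra uniformity that $\ta\to0$ as $\rho\to0$ (\Cref{BeurlingSobolev}, \Cref{rem:Beurling}); and for $\cS$ and $\SLM$ you must also control the decay weights via the uniform boundedness of $(1+|x|)(1+|\Ph_u(s,x)|)^{-1}$, as in \Cref{gelfandshilovroumieuprop} — neither is addressed in your proposal.
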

 		
		The proof of \Cref{maintheorem1} is subdivided into several propositions. 
		One inclusion can be proved uniformly for all classes $\cA$. 
		Abusing notation we shall denote by $\cA$ also the space $\cA(\R^d,\R^d)$.
		
		\begin{proposition}
			\label{maintheorem}
			$\cG_\cA \supseteq \on{Diff}_0\cA$.
		\end{proposition}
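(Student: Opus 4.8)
The plan is to realise every element of $\Diff_0\cA$ as the time-one flow of a suitable smooth time-dependent vector field. Fix $\Ph\in\Diff_0\cA$. Since $\Diff\cA$ is a Lie group modelled on the locally convex space $\cA$, it is locally path-connected: the global chart identifies a neighbourhood of $\Id$ with an open subset of $\cA$, which is locally path-connected. Hence $\Diff_0\cA$ is path-connected, and in fact the set of diffeomorphisms reachable from $\Id$ by a \emph{smooth} curve is a subgroup — one uses smoothness of the group operations to compose, right-translate and invert such curves, and reparametrises concatenations by a function flat at the endpoints so that they stay smooth. This subgroup is open, hence closed, hence equal to $\Diff_0\cA$. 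Therefore I may choose a smooth curve $c:[0,1]\to\Diff\cA$ with $c(0)=\Id$ and $c(1)=\Ph$.

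Next I would pass to the (right) logarithmic derivative
\[
	u(t) := \dot c(t)\o c(t)\i,
\]
the time-dependent vector field whose flow is $c$. Because $\Diff\cA$ is a Lie group, composition and inversion are smooth, so $u$ is a smooth curve in the Lie algebra $T_\Id\Diff\cA=\cA$; in particular each $u(t)$ genuinely lies in $\cA$ and $t\mapsto u(t)$ is continuous. By construction $\dot c(t)=u(t)\o c(t)$ with $c(0)=\Id$, i.e.\ $\frac{d}{dt}c(t,x)=u(t,c(t,x))$ and $c(0,x)=x$; since $\cA\hookrightarrow C^1_b$, uniqueness of solutions of \eqref{ode} gives $c=\Ph_u$, whence $\Ph=c(1)=\Ph_u(1,\cdot)$.

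It then remains to verify $u\in\cF_\cA$, which is the only place the individual structure of $\cA$ enters. In the Fr\'echet cases $\cA\in\{W^{\infty,p},\cS,W^{(M),p},\SbLM\}$ the continuous curve $u$ on the compact interval $[0,1]$ is integrable by seminorm, so $u\in\cF_\cA=L^1(I,\cA)$. In the Roumieu (LB)-cases $\cA\in\{W^{\{M\},p},\SrLM\}$ and in the compactly supported cases $\cA\in\{\cD,\DbM,\DrM\}$ I would invoke compact regularity of the respective inductive limits (and of the Silva space $\DrM$ and the (LF)-space $\cD$): the image $u([0,1])$ is compact, hence contained in a single Banach (resp.\ Fr\'echet) step, on which $u$ is continuous and therefore Bochner integrable; for $\cD$ and $\DM$ this simultaneously produces one compact set $K$ with $\supp u(t)\subseteq K$ for all $t$. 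Thus $u\in\cF_\cA$ in every case, and the inclusion follows.

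The main obstacle is precisely this last step: guaranteeing that $u(t)=\dot c(t)\o c(t)\i$ really takes values in $\cA$ (and not merely in a larger space) and that the resulting curve meets the exact integrability required by $\cF_\cA$ — in particular the factoring through a single step of the inductive limit in the Roumieu and compactly supported cases. Both points rest on the Lie group structure of $\Diff\cA$ established in \cite{KrieglMichorRainer14a} and \cite{MichorMumford13} (smoothness of composition and inversion) together with the compact regularity of the (LB)-, (LF)- and Silva spaces recorded in \Cref{spaces}.
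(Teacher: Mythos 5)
Your proof is correct, and it rests on the same core mechanism as the paper's: connect $\Ph$ to $\Id$ by a path in $\on{Diff}\cA$ and take the right logarithmic derivative as the time-dependent vector field — note that the paper's segmentwise field $u(t)=\ph\circ\ga(t)^{-1}$ with $\ga(t)=\Id+t\ph$ is exactly $\dot\ga(t)\circ\ga(t)^{-1}$. Where you genuinely diverge is in handling the global connection to $\Id$. The paper stays with a \emph{polygon} in the global chart: each straight segment yields a smooth vector field as above and hence an element of $\cG_\cA$, and the segments are stitched together algebraically, using that $\cG_\cA$ is a group together with the observation that right translation by $\Ph_k^{-1}$ carries the chart segment from $\Ph_k$ to $\Ph_{k+1}$ to a chart segment from $\Id$ to $\Ph_{k+1}\circ\Ph_k^{-1}$. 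You instead manufacture a \emph{single smooth curve} from $\Id$ to $\Ph$ via the open--closed argument for the subgroup of smoothly reachable elements (with flat reparametrizations at concatenation points), and then need only one vector field. Each route buys something: yours avoids invoking the group property of $\cG_\cA$ entirely (the paper needs it for the iteration $\Ph_{k+1}\circ\Ph_k^{-1}\in\cG_\cA\Rightarrow\Ph_{k+1}\in\cG_\cA$) and realizes $\Ph$ as a single flow endpoint, at the cost of the extra subgroup-and-smoothing apparatus; the paper's vector field is completely explicit on each segment, and its smoothness requires nothing beyond smoothness of inversion in $\on{Diff}\cA$. Your final integrability step is identical in substance to the paper's: both place the smooth curve $u$ into a single step of the inductive limit via compact regularity (resp.\ the Silva and strict (LF) structure) in the Roumieu and compactly supported cases, and use integrability by seminorm of continuous curves in the Fr\'echet cases.
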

		
		\begin{proof}
			Observe that $\on{Diff}\cA-\Id$ is open in $\cA$, and hence so is $\on{Diff}_0\cA-\Id$. 
			Since $\on{Diff}_0\cA-\Id$ is connected and locally path-connected, it is path-connected. 
			Thus any $\Ph \in \on{Diff}_0\cA$ can be connected by a polygon with the identity.

			Now take $\Ph = \Id +\ph \in \on{Diff}_0\cA$ such that $\ga(t):= \Id + t \ph \in \on{Diff}\cA$ for 
			$0\le t \le 1$. Since $\on{Diff}\cA$ is a Lie group, $u(t):= \ph\circ\ga(t)^{-1}$ is smooth. 
			Hence in all cases $u$ is in $\cF_\cA$ (in the Roumieu cases a smooth curve factors through a 
			step in the defining inductive limit, since it is compactly regular or Silva).
			So $\Ph = \Ph_u(1,\cdot) \in \cG_\cA$.
 
			For the general case, let $\Ph \in \on{Diff}_0\cA$. Take a polygon with vertices 
			$\Id=\Ph_1, \dots, \Ph_n=\Ph$ connecting $\Id$ with $\Ph$. 
			By the previous paragraph, we know already that $\Ph_2 \in \cG_\cA$. 
			Now we may argue iteratively. 
			By assumption, the segment between $\Ph_2$ and $\Ph_3$ lies in $\on{Diff}\cA$ and therefore also 
			the segment between $\Id$ and $\Ph_3\circ \Ph_2^{-1}$ which shows that $\Ph_3\circ \Ph_2^{-1} \in \cG_\cA$. 
			Since $\cG_\cA$ is a group, we may conclude that $\Ph_3 \in \cG_\cA$. 
			Applying the same argument for the remaining vertices finally shows that $\Ph \in \cG_\cA$.
		\end{proof}

		The other inclusion $\cG_\cA \subseteq \on{Diff}_0\cA$ follows from the following assertions:
		\begin{itemize}
			\item[($\bullet$)] \emph{For each time-dependent vector field $u \in \cF_\cA$ the flow $t \mapsto \ph_u(t) \in \cA$ 
					is continuous.}
			\item[($\bullet \bullet$)]	\emph{$\inf_{x \in \R^d} \det d\Ph_u(t,x) > 0$ for all  $t \in I$}.	\label{detpositive}
		\end{itemize} 
		Let us check ($\bullet \bullet$). 
		Observe that there is some $v \in \cF_\cA$ such that $\Ph_u(t)^{-1} = \Ph_v(t)$ for all $t$.  
		In particular, $\Ph_u(t)^{-1}$ is continuously differentiable on $\R^d$ and 
		the first derivative is invertible at any point $x \in \R^d$. 
		For fixed $x \in \R^d$, the mapping $t \mapsto \ga(t):= \det d\Ph_u(t,x)$ is continuous and has values in 
		$\R \setminus \{0\}$. 
		Since $\ga(0)=1$, it thus follows that $\ga(t) = \det d\Ph_u(t,x) > 0$ for all $x \in \R^d$ and $t \in I$. 
		Now suppose there exists a sequence $x_n$ in $\R^d$ such that $\det d\Ph_u(t,x_n) \rightarrow 0$ 
		as $n \rightarrow \infty$. This would imply  
		the existence of a sequence $y_n$ such that $\det d \Ph_v (t,y_n) \rightarrow \infty$. 
		But this contradicts ($\bullet$), since all spaces $\cA$ are contained in $\cB$.

		The rest of the section is devoted to prove ($\bullet$).
		Depending on $\cA$, the techniques will be different.

		\subsection{Ultradifferentiable classes}
		First we treat the weighted Roumieu-type spaces.
		
		\begin{proposition}
			\label{MinfRoumieu}
			Let $M=(M_k)$ be a regular sequence. 
			For each time-dependent vector field $u \in \cF_{\cB^{\{M\}}}$ the flow $t \mapsto \ph_u(t) \in \cB^{\{M\}}$ 
			is continuous.
		\end{proposition}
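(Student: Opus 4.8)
The plan is to obtain uniform-in-$t$ bounds on all spatial derivatives of the flow by an induction on the order of differentiation that couples the Fa\`a di Bruno formula with a Gr\"onwall argument, and then to deduce continuity from the integral representation of the flow. Write $\Ph(t):=\Ph_u(t)$ and $\ph(t):=\ph_u(t)=\Ph(t)-\Id$. Since $\cF_{\cB^{\{M\}}}\subseteq L^1(I,C^1_b)$, the flow exists and $\ph(t)\in C^1_b$ depends continuously on $t$ by the classical theory. By the definition of $\cF_{\cB^{\{M\}}}$ (condition \eqref{ultraRoumieuSobolev} with $p=\infty$) fix $\si>0$ with $\int_0^1\|u(s)\|^M_\si\,ds<\infty$, and recall from \Cref{measurable} that the integrands below are measurable in $s$.

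First I would differentiate $\ph(t,x)=\int_0^t u(s,\Ph(s,x))\,ds$ in $x$. For $|\ga|\ge 1$ the Fa\`a di Bruno formula (\Cref{fdb}) gives
\[
\p^\ga_x\big(u(s)\o\Ph(s)\big)=du(s,\Ph(s))\cdot\p^\ga\Ph(s)+g_\ga(s),
\]
where the first summand is the unique term containing the top-order derivative $\p^\ga\Ph$ (the contribution with $\ell=1$, $\de_1=\ga$, $|k_1|=1$) and $g_\ga(s)$ collects the remaining summands, each involving only derivatives $\p^\de\Ph$ with $|\de|<|\ga|$. Thus $\p^\ga\Ph(t)$ solves a linear integral equation whose homogeneous part is governed by $du(s,\Ph(s))$, and Gr\"onwall's inequality yields
\[
\|\p^\ga\Ph(t)\|_{L^\infty}\le\Lambda\int_0^t\|g_\ga(s)\|_{L^\infty}\,ds\quad(|\ga|\ge 2),\qquad \Lambda:=\exp\Big(\int_0^1\|du(s)\|_{L^\infty}\,ds\Big),
\]
which is finite because $\|du(s)\|_{L^\infty}\le d\si M_1\|u(s)\|^M_\si$.

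The core is an inductive estimate on a short interval $[0,\tau]$ on which $Q:=\int_0^\tau\|u(s)\|^M_\si\,ds$ is small. I would prove $\|\p^\de\Ph(t)\|_{L^\infty}\le Q\rho^{|\de|}|\de|!\,M_{|\de|}$ for all $t\in[0,\tau]$ and all $\de$, by induction on $|\de|$, with a finite $\rho$ fixed through the base cases $|\de|\le 1$. Granting orders $<n$, I bound $\|g_\ga(s)\|_{L^\infty}$ for $|\ga|=n$ by inserting $\|\p^\al u(s)\|_{L^\infty}\le\|u(s)\|^M_\si\si^{|\al|}|\al|!M_{|\al|}$ and the induction hypothesis into the Fa\`a di Bruno sum; after normalising with \eqref{multinomial} and pulling the factor $(\rho d)^{|\ga|}$ out of the inner derivatives, \Cref{fdbApplication} (which rests on \eqref{eq:Childress}) applied with $A=Q\si d$ gives
\[
\|g_\ga(s)\|_{L^\infty}\le|\ga|!\,\|u(s)\|^M_\si\,B\,(\rho dC)^{|\ga|}M_{|\ga|},
\]
with $B,C$ depending only on $Q\si dM_1$ and $d$. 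Feeding this into the Gr\"onwall bound, the factors $Q$ and $\rho^{|\ga|}$ cancel against the target and the induction closes as soon as $\Lambda B(dC)^{|\ga|}\le 1$ for all $|\ga|\ge 2$. This is exactly where the clause \emph{$C\to 0$ as $A\to 0$} of \Cref{fdbApplication} is used: choosing $\tau$ (hence $Q$, hence $A$) small forces $dC<1$, closing the induction and giving $\ph(t)\in W^{M,\infty}_\rho\subseteq\cB^{\{M\}}$ for $t\le\tau$. Splitting $[0,1]$ into finitely many such subintervals and composing the corresponding flows — legitimate since $\Diff\cB^{\{M\}}$ is a group (\Cref{sec:diffeo}) — extends membership to every $t\in[0,1]$ and, together with the composition estimate, yields a uniform bound $\|\p^\ga\ph(t)\|_{L^\infty}\le C_*\rho_*^{|\ga|}|\ga|!M_{|\ga|}$ on all of $[0,1]$.

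Finally, for continuity I would combine this uniform bound with the representation $\p^\ga\ph(t)=\int_0^t\p^\ga(u(s)\o\Ph(s))\,ds$. For fixed $N$ the integrand is dominated, uniformly for $|\ga|\le N$, by an $L^1$ function of $s$ (via the bounds above and $\|u(s)\|^M_\si\in L^1$), so absolute continuity of the integral makes $t\mapsto\ph(t)$ continuous into $C^N_b$. To upgrade to continuity into $\cB^{\{M\}}$, fix $\rho'>\rho_*$ and estimate $\|\ph(t)-\ph(t_0)\|^M_{\rho'}$ by splitting the supremum over $\ga$: the terms $|\ga|\le N$ are handled by $C^N_b$-continuity, while for $|\ga|>N$ the uniform bound gives the tail estimate $2C_*(\rho_*/\rho')^{|\ga|}$, which is arbitrarily small for $N$ large since $\rho_*/\rho'<1$. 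Hence $t\mapsto\ph(t)$ is continuous into $W^{M,\infty}_{\rho'}\hookrightarrow\cB^{\{M\}}$. I expect the main obstacle to be the inductive Fa\`a di Bruno estimate — cleanly isolating the top-order term for the Gr\"onwall step and matching the scaling so that \Cref{fdbApplication} applies — together with the recognition that the induction only closes after localising in time so that the amplitude $Q$ (and hence the constant $C$) is small.
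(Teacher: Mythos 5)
Your proof is correct, but it reaches the crucial step --- membership of $\ph_u(t)$ in $\cB^{\{M\}}$ --- by a genuinely different route. The paper localizes in time exactly as you do (choosing $J$ with $\int_J \|u(s)\|^M_{\rh}\,ds \le \tfrac12 (\max(1,\rh)M_1)^{-1}$), but then studies the fixed-point operator $T(\ph,x)(t)=\int_{t_0}^t u(s,x+\ph(s))\,ds$ on $C(J,\R^d)\times\R^d$: it shows $T$ is a $\cB^{\{M\}}$-map between Banach spaces with $\|T\|^M_{C(J,\R^d)\times\R^d,\rh}\le 1/2$ and invokes Yamanaka's ultradifferentiable implicit function theorem \cite[Theorem 3]{Yamanaka89} for $S(\ph,x)=\ph-T(\ph,x)$, so the fixed point $x\mapsto\ph(x)$ inherits the $\cB^{\{M\}}$-estimates wholesale, with smooth dependence on $x$ \emph{constructed} rather than assumed. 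You instead differentiate the integral equation, isolate the top-order term $du(s,\Ph(s))\cdot\p^\ga\Ph(s)$, and close a Gronwall-plus-Fa\`a-di-Bruno induction using the clause $C\to 0$ as $A\to 0$ in \Cref{fdbApplication} after making $Q$ small; this is precisely the scheme the paper itself uses for the unweighted class $\cB$ in \Cref{unweighted} (following \cite[Theorem 8.9]{Younes10}), upgraded with quantitative $M$-estimates, and it avoids the infinite-dimensional implicit function theorem. The price is that you take from ``classical theory'' that the flow of a Carath\'eodory field in $L^1(I,C^k_b)$ is $C^k$ in $x$ with $\p^\ga\Ph$ satisfying the differentiated integral equation; that is standard but should be cited or proved, since it is exactly what the paper's fixed-point construction delivers for free. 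The gluing and the continuity argument coincide with the paper's \Cref{gluing} and \Cref{claim:cont}; your $C^N_b$-plus-geometric-tail upgrade into $W^{M,\infty}_{\rho'}$ is slightly more roundabout than the paper's one-shot estimate $\|\ph_u(t)-\ph_u(r)\|^M_{\si}\le D\int_r^t\|u(s)\|^M_{\rh}\,ds$, but both work. Two cosmetic points: your inductive bound cannot hold for $\de=0$ (where $\Ph=\Id+\ph$ is unbounded) --- harmless, since the Fa\`a di Bruno sum only involves $\de_i\neq 0$ and $\|\ph(t)\|_{L^\infty}$ is bounded separately --- and $\rho\sim\Lambda/(QM_1)$ grows as $\tau$ shrinks, which is fine because, as you note, the factor $\rho^{|\ga|}$ cancels in the inductive step.
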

		
		\begin{proof}
			Take $u \in \cF_{\cB^{\{M\}}}$. Due to \eqref{ultraRoumieuSobolev}, 
			there exists $\rh>0$ such that $\int_0^1 \|u(t)\|^M_{\rh} \,dt < \infty$. 
			We show first that $t \mapsto \Ph_{u}(t, \cdot)$ has values in $\Id + \cB^{\{M\}}$ and 
			later on continuity in $t$.

			Let $\de > 0, ~t_0 \in I$ and set $J := I \cap [t_0-\de, t_0+\de]$. 
			For $\ph \in C(J, \R^d)$ and $x \in \R^d$, define
			a mapping $T: C(J,\R^d)\times\R^d \rightarrow C(J,\R^d)$ by setting
			\begin{equation*}
				T(\ph,x)(t):= \int_{t_0}^t u(s, x+\ph(s))\,ds.
			\end{equation*}
			We claim that 
			$T(\cdot,x)$ is Lipschitz with Lipschitz constant less than $1$ if $\de$ is chosen sufficiently small. For,
			\begin{align*}
			|T(\ph,x)(t) - T(\ps,x)(t)| & = \big| \int_{t_0}^t u(s,x+\ph(s)) - u(s, x+\ps(s)) \,ds \big|\\
			&\le \int_{t_0}^t |u(s,x+\ph(s)) - u(s, x+\ps(s))|\,ds\\
			&\le  \rh M_1 \int_{t_0}^t \|u(s)\|^M_{\rh} |\ph(s)-\ps(s)| \,ds\\
			&\le \rh M_1 \int_J \|u(s)\|^M_{\rh} \,ds ~ \|\ph-\ps\|_{L^\infty},
			\end{align*}
			and $\int_J \|u(s)\|^M_{\rh} \,ds$ gets arbitrarily small (independent of $t_0$) by choosing $\de$ small enough. So let $\de$ be chosen such that 
			\begin{equation}
			\label{deltachoice}
			\max(1,\rh) M_1 \int_J \|u(s)\|^M_{\rh} \,ds \le 1/2.
			\end{equation}
			Then, for each fixed $x$,
			 $T(\cdot,x)$ is a contraction on $C(J,\R^d)$ and has a unique fixed point $\ph(x) \in C(J, \R^d)$, i.e., 
			 for all $|t-t_0|\le \de$,
			\begin{equation} \label{fixedpoint}
				\ph(x)(t) = T(\ph(x),x)(t) = \int_{t_0}^t u(s, x+\ph(x)(s))\,ds.	
			\end{equation}

			Patching the solutions together appropriately (this will be carried out in more detail later, 
			see \Cref{gluing}), we get a solution of \eqref{ode} on $I$. 
			Thus we can define, for each $t \in I$, $\Ph_u(t)=\Id+\ph_u(t)$ and we have to show that 
			\[
			\ph_u(t) \in \cB^{\{M\}}(\R^d,\R^d) \quad \text{ for all } t \in I.
			\]
			To this end we need some additional observations.\par 
			
			\begin{claim} \label{claim:1}
				$T \in C^\infty(C(J,\R^d)\times\R^d,C(J,\R^d))$ and for $k \ge 0$ and $\al \in \N^d$, we get
			\begin{equation}
			\label{derivative}
			\p^k_\ph \p^\al_x T(\ph,x)(\ps_1, \dots, \ps_k)(t) 
			= \int_{t_0}^t d^k \p_x^\al u(s, x+\ph(s))(\ps_1(s), \dots, \ps_k(s))\,ds,
			\end{equation}
			where $\p^k_\ph$ denotes the $k$-th (Fr\'echet-)derivative with respect to $\ph \in C(J, \R^d)$,  
			$\p^\al_x$ the $\al$-th partial derivative with respect to $x \in \R^d$, 
			$d^k$ the $k$-th total derivative with respect to the second variable (in $\R^d$).
			\end{claim}

			We illustrate the inductive start, the induction step works along similar lines. For the existence (and continuity) of the first derivative it is enough to show existence and continuity of the first partial derivatives. Fix $x \in \R^d$ and $\ph, \ps \in C(J,\R^d)$. Then, for all $t \in J$,  
			\begin{align*}
			\MoveEqLeft
			|T(\ph+\ps,x)(t) - T(\ph,x)(t) - \int_{t_0}^td u(s, x+\ph(s))\cdot\ps(s) \,ds| \\
			\le~ & \int_{t_0}^t| u(s, x+\ph(s) + \ps(s)) - u(s, x+\ph(s)) - d u(s, x+\ph(s))\cdot\ps(s)| \,ds \\
			\le ~& \int_{t_0}^t \sup_{y \in \R^d}\|  d^2 u(s, y)\|_{L_2(\R^d, \R^d)}|\ps(s)|^2 \,ds\\
			\le ~& 2 \rh^2 M_2\|\ps\|_{L^\infty}^2 \int_{0}^1 \|u(s)\|^M_{\rh}\,ds.
			\end{align*} 
			Therefore $\p_\ph T(\ph,x)(\ps) (t)= \int_{t_0}^t d u(s, x+\ph(s))\cdot\ps(s) \,ds$, which is easily seen to be 
			continuous as a mapping from $C(J,\R^d)\times \R^d \rightarrow L(C(J,\R^d),C(J,\R^d))$. The existence and continuity 
			of the derivative with respect to $x$ can be proven similarly. This implies \eqref{derivative} for $k = 1$ and 
			$|\al| = 1$. Now proceed by induction to finish the proof of \Cref{claim:1}.\par

			\begin{claim} \label{claim:2}
				$T \in \cB^{\{M\}}(C(J,\R^d)\times \R^d, C(J,\R^d))$.				
			\end{claim}

			Using \eqref{derivative}, we get for $\|\ps_i\|_{L^\infty} \le 1$, 
			\begin{align*}
			\|\p^k_\ph \p^\al_x T (\ph,x)(\ps_1, \dots,\ps_k)\|_{L^\infty} &\le \rh^{k+|\al|}(k+|\al|)!M_{k+|\al|}  
			\int_J \|u(s)\|^M_{\rh}\,ds
			\end{align*}
			which gives $\|T\|^M_{C(J,\R^d)\times\R^d,\rh} \le 1/2$, by \eqref{deltachoice}, and thus proves \Cref{claim:2}. 
			In particular,
			\begin{equation}
			\label{Tderivative}
			\|\p_\ph T(\ph,x)\|_{L(C(J,\R^d), C(J,\R^d))} \le 1/2, \quad  \ph \in C(J,\R^d),~ x \in \R^d.
			\end{equation}

			\medskip

			Consider the mapping 
			\[
			S: C(J,\R^d)\times \R^d \rightarrow C(J,\R^d), \quad (\ph,x) \mapsto \ph - T(\ph,x).
			\]
			Then $\partial_\ph S(\ph,x) = \Id - \partial_\ph T(\ph,x) \in L(C(J,\R^d), C(J,\R^d))$ admits a bounded inverse due to \eqref{Tderivative}. The inverse is given by the Neumann series $\sum_{k = 0}^\infty \partial_\ph T(\ph,x)^k$, which thus gives 
			\[
			\|\partial_\ph S(\ph,x)^{-1}\|_{L(C(J,\R^d), C(J, \R^d))} \le 2.
			\]
			For all $(k,\al) \in \N^{1+d}$ except $(0, 0, \dots,0)$ and $(1,0,\dots,0)$, we get
			\[
			\|\partial_\ph^k \partial_x^\al S(\ph,x)\|\le \rh^{k+|\al|}M_{k+|\al|}(k+|\al|)!\, \|T\|^M_{C(J,\R^d)\times\R^d,\rh} ,
			\]
			and for $(1,0,\dots,0)$ we clearly have
			\[
			\|\partial_\ph S(\ph,x)\| \le 1+ \rh M_1 \|T\|^M_{C(J,\R^d)\times\R^d,\rh}.
			\]
			Since $M$ is a regular sequence, we can apply the implicit function theorem for the class $\cB^{\{M\}}$, 
			cf.\ \cite[Theorem 3]{Yamanaka89}.  
			Thus, $x \mapsto \ph(x)$ fulfills the $\cB^{\{M\}}$-estimates for derivatives of order $\ge 1$. 
			Moreover, \eqref{fixedpoint} and $\int_0^1\|u(t)\|_{L^\infty} \,dt < \infty$ imply that $x\mapsto \ph(x)$ is globally bounded. So there exists (for each $J$) a $\ta>0$ such that $\ph \in \cB^{M}_\ta(\R^d, C(J,\R^d))$. Since we can cover $[0,1]$ by finitely many $J$, we can choose one $\ta$ valid for all $J$. Observe however that $\ph$ (the fixed point of $T$) depends on the interval $J$.\par 

			\begin{claim} \label{gluing}
 				There is a global solution $\ph$, 
			\begin{equation}
			\label{phisolution}
			\ph(x)(t) = \int_0^t u(s,x+\ph(x)(s))\,ds, \quad  t \in I,
			\end{equation}
			and there exists $\la>0$ such that $\ph(\cdot)(t) \in \cB^M_\la(\R^d,\R^d)$ and it is uniformly bounded therein with respect to $t \in I$.	
			\end{claim}

			At this point, we know that, for each $J_k:= [(k-1)\de, k\de]$, 
			there exists $\ph_k \in \cB^{M}_\ta (\R^d, C(J_k, \R^d))$ such that
			\[
			\ph_k(x)(t) = \int_{(k-1)\de}^t u(s,x+\ph_k(x)(s))\,ds, \quad  t \in J_k,\,  x \in \R^d.
			\]
			Since $\ev_t$ is a bounded linear operator on $C(J,\R^d)$, we have 
			$\p^l(\ev_t \circ \ph_k)(x) =  \ev_t \p^l\ph_k(x)$, 
			and therefore $x \mapsto \ph_k(x)(t) = \ev_t \circ \ph (x)\in \cB^{M}_\ta (\R^d,\R^d)$ for any $t \in J_k$. 
			Now we can define $\ph$ iteratively: 
			For $t \in J_1$ and $x \in \R^d$, set $\ph(x)(t):= \ph_1(x)(t)$. 
			Suppose we have already defined $\ph$ on $[0, k\de]$. Then set 
			$y_k(x):= x+\ph(x)(k\de) \in \Id+\cB^{\{M\}}(\R^d,\R^d)$. And for $t \in J_{k+1}$, let 
			\[
			\ph(x)(t):= \ph_{k+1}(y_k(x))(t).
			\]
			Using composition-closedness of $\cB^{\{M\}}$ 
			(cf.\ \cite[Theorem 6.1]{KrieglMichorRainer14a}), 
			it is clear that $\ph(\cdot)(t) \in \cB^{\{M\}}(\R^d, \R^d)$ for $t \in [0,(k+1)\de]$. 
			Iterating this procedure sufficiently many times, 
			finally gives \eqref{phisolution} and the construction also yields the additional boundedness condition.
			\Cref{gluing} is proved.

			We are left to prove continuity of $t \mapsto \Ph_{u}(t)$ as a mapping into $\Id + \cB^{\{M\}}$. 

			\begin{claim} \label{claim:cont}
			There exists $\si \ge \rh$ such that $t \mapsto \ph_u(t)$ is continuous into $\cB^M_\si(\R^d,\R^d)$.				
			\end{claim}
 
			By \Cref{gluing}, there exists $\la > 0$ such that $\ph_{u}(t)$ is  bounded (uniformly in $t$) in 
			$\cB^{M}_\la(\R^d,\R^d)$ by some constant $C$. And we have for $0\le r\le t\le 1$, $\ga \in \N^d$ and $x \in \R^d$, 
			\begin{equation}
			\label{continuityinequ}
			\frac{|\p^\ga_x \ph_u(t,x) -  \p^\ga_x \ph_u(r,x)|}{\si^{|\ga|}|\ga|! M_{|\ga|}} \le \int_r^t  \frac{|\p^\ga_x (u (s,\Ph_u(s,x)))|}{\si^{|\ga|}|\ga|! M_{|\ga|}}\,ds,
			\end{equation}
			where we used \eqref{ultraRoumieuSobolev} to justify interchanging differentiation and integration. Next we apply 
		Fa\`a di Bruno's formula \eqref{faa} to the integrand and get
			\begin{align*}
				\MoveEqLeft
				\frac{\|\p^\ga_x(u(s)\circ \Ph_u(s))\|_{L^\infty(\R^d)}}{\ga! M_{|\ga|}} 
				\le \sum \frac{\al!}{k_1! \dots k_l!}\frac{\|\p^\al_x u(s)\|_{L^\infty(\R^d)}}{\al! M_{|\al|}}\\
				&\quad\quad\quad\quad\quad \cdot\bigg(\frac{\|\p^{\de_1}_x \Ph_u(s)\|_{L^\infty(\R^d)}}{\de_1! M_{|\de_1|}}\bigg)^{|k_1|}\cdots  \bigg(\frac{\|\p^{\de_l}_x \Ph_u(s)\|_{L^\infty(\R^d)}}{\de_l! M_{|\de_l|}}\bigg)^{|k_l|}\\
				&\quad\quad\quad\quad \le \sum \frac{\al!}{k_1! \dots k_l!}(d\rh (C+1))^{|\al|}(d\la)^{|\ga|}\frac{\|\p^\al_x u(s)\|_{L^\infty(\R^d)}}{\rh^{|\al|}|\al|! M_{|\al|}}\\
				&\quad\quad\quad\quad \le \sup_{\be \in \N^d} \frac{\|\p^\be_x u(s)\|_{L^\infty(\R^d)}}{\rh^{|\be|}|\be|! M_{|\be|}} (d\la)^{|\ga|}\sum \frac{\al!}{k_1! \dots k_l!}(d\rh (C+1))^{|\al|}.
			\end{align*}
			where the summation is as in \eqref{faa}. For the first inequality we used \eqref{eq:Childress}, 
			the second one follows from the boundedness condition and \eqref{multinomial}. 
			The sum on the right-hand side is $\le D \ta^{|\ga|}$, for some $D,\ta>0$, by \Cref{fdbApplication}.  
			Choosing $\si = d\la\ta$ and plugging this into the right-hand side of \eqref{continuityinequ} implies
			\Cref{claim:cont}.

			The proof of the proposition is complete.
		\end{proof}
		
		\begin{proposition}
			\label{MpRoumieu}
			Let $M=(M_k)$ be a regular sequence and $p \in [1,\infty)$. 
			For each time-dependent vector field $u \in \cF_{W^{\{M\},p}}$ the flow $t \mapsto \ph_u(t) \in W^{\{M\},p}$ 
			is continuous.
		\end{proposition}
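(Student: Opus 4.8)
The plan is to reduce everything possible to the already established Roumieu case \Cref{MinfRoumieu} and to supply only the one genuinely new ingredient, namely the transfer of $L^p$-integrability of $u$ through composition with the flow. First I would note that, since $M$ is regular and hence derivation closed, the Sobolev inequality \Cref{SobolevInequality} gives a continuous embedding $W^{M,p}_\si(\R^d,\R^d)\hookrightarrow\cB^M_{\si'}(\R^d,\R^d)$ for a suitable $\si'$ (one bounds $\|f^{(\al)}\|_{L^\infty}$ by $\sum_{|\be|\le k}\|f^{(\al+\be)}\|_{L^p}$ with $k=\lfloor d/p\rfloor+1$ and absorbs the finitely many extra derivatives using moderate growth and derivation closedness), so that $\|f\|^M_{\si'}\le K\|f\|^{M,p}_\si$. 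Combined with \eqref{ultraRoumieuSobolev} this shows $u\in\cF_{\cB^{\{M\}}}$, whence \Cref{MinfRoumieu} applies: the flow $\Ph_u(t)=\Id+\ph_u(t)$ exists with $\ph_u(t)\in\cB^{\{M\}}$, is uniformly bounded in some $\cB^M_\la(\R^d,\R^d)$ by a constant $C$, and $t\mapsto\ph_u(t)$ is continuous into $\cB^{\{M\}}$. It remains only to upgrade the spatial regularity from $\cB^{\{M\}}$ to $W^{\{M\},p}$ and to prove continuity in this finer topology.

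The key preliminary is a uniform change-of-variables bound. From $(\bullet\bullet)$ we have $\inf_x\det d\Ph_u(t,x)>0$ for each $t$; since the inverse flow $\Ph_u(t)^{-1}=\Ph_v(t)$ is likewise uniformly bounded in some $\cB^M_{\la'}$ (apply \Cref{MinfRoumieu} to the $v\in\cF_{W^{\{M\},p}}$ furnished by the discussion of $(\bullet\bullet)$), the identity $(d\Ph_u(t,x))^{-1}=d\Ph_v(t,\Ph_u(t,x))$ shows that $\|(d\Ph_u(t,x))^{-1}\|$ is bounded uniformly in $t$ and $x$, hence $\det d\Ph_u(t,x)\ge c>0$ uniformly. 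Substituting $y=\Ph_u(t,x)$ then yields, for every $g\in L^p(\R^d)$,
\[
\|g\circ\Ph_u(t)\|_{L^p}\le c^{-1/p}\,\|g\|_{L^p},
\]
uniformly in $t$.

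With this in hand I would rerun the Fa\`a di Bruno computation of \Cref{MinfRoumieu} in the $L^p$-norm instead of $L^\infty$. Differentiating \eqref{phisolution}, interchanging $\p^\ga_x$ with the time integral (justified by \eqref{ultraRoumieuSobolev}), and taking the $L^p$-norm in $x$ gives $\|\p^\ga_x\ph_u(t)\|_{L^p}\le\int_0^t\|\p^\ga_x(u(s)\circ\Ph_u(s))\|_{L^p}\,ds$. Expanding the integrand by \eqref{faa} and using the $L^p$-triangle inequality, in each summand I would keep only the single composition factor $(\p^\al u(s))\circ\Ph_u(s)$ in the $L^p$-norm—estimated by $c^{-1/p}\|\p^\al u(s)\|_{L^p}$ via the change of variables above—and pull the remaining factors $\p^{\de_i}\Ph_u(s)$ out in the $L^\infty$-norm using the uniform $\cB^M_\la$-bound $C$. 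From here the estimate is formally identical to the one in \Cref{MinfRoumieu}: invoking \eqref{eq:Childress}, \eqref{multinomial}, the bound $\|\p^\al u(s)\|_{L^p}\le\rh^{|\al|}|\al|!M_{|\al|}\|u(s)\|^{M,p}_\rh$, and \Cref{fdbApplication}, one produces $\si\ge\rh$ and $D>0$ with
\[
\frac{\|\p^\ga_x(u(s)\circ\Ph_u(s))\|_{L^p}}{\si^{|\ga|}|\ga|!M_{|\ga|}}\le D\,\|u(s)\|^{M,p}_\rh
\]
for all $\ga$ and $s$. Integrating in $s$ shows $\ph_u(t)\in W^{M,p}_\si\subseteq W^{\{M\},p}$.

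Finally, continuity follows by substituting $\ph_u(t)-\ph_u(r)$ for $\ph_u(t)$ in the same chain of inequalities, exactly as in \eqref{continuityinequ}, to obtain
\[
\|\ph_u(t)-\ph_u(r)\|^{M,p}_\si\le D\int_r^t\|u(s)\|^{M,p}_\rh\,ds,
\]
whose right-hand side is independent of $\ga$, so the supremum over $\ga$ may be taken freely; absolute continuity of the integral forces the left-hand side to $0$ as $r\to t$, giving continuity of $t\mapsto\ph_u(t)$ into $W^{M,p}_\si$ and hence into $W^{\{M\},p}$. I expect the main obstacle, and the only real departure from the $L^\infty$ case, to be the change-of-variables estimate together with the uniform lower bound on $\det d\Ph_u$: once exactly one $L^p$-factor has been isolated in each Fa\`a di Bruno term, everything else is a verbatim repetition of the summation already carried out in \Cref{MinfRoumieu}.
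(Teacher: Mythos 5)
Your proposal is correct and takes essentially the same route as the paper's proof: reduce to \Cref{MinfRoumieu} via the Sobolev embedding \Cref{SobolevInequality}, obtain a uniform lower bound on $\det d\Ph_u$ so that the change of variables $y=\Ph_u(s,x)$ transfers $L^p$-norms through the composition (the paper's \eqref{integralest}), and then rerun the Fa\`a di Bruno estimate keeping exactly one factor in the $L^p$-norm while the derivatives of $\Ph_u$ are controlled in $L^\infty$ by the $\cB^M_\la$-bound. The only difference is presentational: you spell out the Jacobian bound via the inverse flow $\Ph_v$ and the Minkowski step, which the paper leaves largely implicit.
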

		
		\begin{proof}
			Let $u \in \cF_{W^{\{M\},p}}$, i.e., there exists $\rh > 0$ such that $\int_0^1 \|u(t)\|^{M,p}_{\rh} \,dt < \infty$. Then $u \in \cF_{\cB^{\{M\}}}$, by \Cref{SobolevInequality},  
			and thus by \Cref{MinfRoumieu}, 
			$I \ni t \mapsto \Ph_u(t)$ is a curve of diffeomorphisms in $\on{Diff}\BrM$. Therefore, there is $B>0$ such that
			\begin{align}
			\label{integralest}
			\begin{split}
				\int_{\R^d} |\p^\al_x u(s,\Ph_u(s,x))|^p\,dx&=\int_{\R^d} \frac{|\p^\al_x u(s,y)|^p}{|\det d \Ph_u(s,\Ph_u^{-1}(s,y))|}\,dy\\
				&\le B \int_{\R^d} |\p^\al_x u(s,y)|^p \,dy.
				\end{split}
			\end{align} 
			Using this, we may argue analogously as in the end of the proof of \Cref{MinfRoumieu}. First we apply Minkowski's integral inequality to get an analogue of \eqref{continuityinequ}. 
			For $0\le r \le t\le 1$ and $\si \ge \rh$,
			\begin{align}
			\label{continuityinequ2}
				\sup_{\ga} \frac{\|\p^\ga_x\ph_u(t)-\p^\ga_x\ph_u(r)\|_{L^p(\R^d)}}{\si^\ga |\ga|!M_{|\ga|}}\le \int_r^t \sup_{\ga} \frac{\|\p^\ga_x( u(s)\circ\Ph_u(s))\|_{L^p(\R^d)}}{\si^\ga |\ga|! M_{|\ga|}} \,ds.
			\end{align}
			For the integrand on the right-hand side, we use the Fa\`a di Bruno formula \eqref{faa} and \eqref{integralest} to get
			\begin{align*}
				\MoveEqLeft
				\frac{\|\p^\ga_x (u(s)\circ\Ph_u(s))\|_{L^p(\R^d)}}{\ga! M_{|\ga|}} 
				\le  B^{1/p} \sum \frac{\al!}{k_1! \dots k_l!}\frac{\|\p^\al_x u(s)\|_{L^p(\R^d)}}{\al! M_{|\al|}} \\
				&\hspace{3.5cm} \cdot\bigg(\frac{\|\p^{\de_1}_x \Ph_u(s)\|_{L^\infty(\R^d)}}{\de_1! M_{|\de_1|}}\bigg)^{|k_1|}\cdots  \bigg(\frac{\|\p^{\de_l}_x \Ph_u(s)\|_{L^\infty(\R^d)}}{\de_l! M_{|\de_l|}}\bigg)^{|k_l|},
			\end{align*}
			and then complete the proof as in \Cref{MinfRoumieu}.		
		\end{proof}

		\begin{proposition}
			\label{gelfandshilovroumieuprop}
			Let $M=(M_k)$ be a regular sequence and $L = (L_k)$ satisfy $L_k \ge 1$. 
			For each time-dependent vector field $u \in \cF_{\SrLM}$ the flow $t \mapsto \ph_u(t) \in \SrLM$ 
			is continuous.
		\end{proposition}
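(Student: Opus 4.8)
The plan is to bootstrap from \Cref{MinfRoumieu}. Since $\SrLM \hookrightarrow \BrM$ (see the inclusion diagram in \Cref{spaces}), any $u \in \cF_{\SrLM}$, i.e.\ satisfying \eqref{ultraRoumieuGelfand} for some $\rh>0$, also lies in $\cF_{\cB^{\{M\}}}$. Hence \Cref{MinfRoumieu} already provides the flow $\ph_u$, shows that $t \mapsto \Ph_u(t)$ is a continuous curve in $\on{Diff}\BrM$, and (by \Cref{gluing}) yields a $\la>0$ and a constant $C$ with $\ph_u(t)$ bounded by $C$ in $\cB^M_\la(\R^d,\R^d)$ uniformly in $t$. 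It remains only to upgrade the $M$-control of the derivatives to the full Gelfand--Shilov norm, i.e.\ to produce the polynomial decay encoded by the weight $(1+|x|)^p$. The key elementary observation is that, since $\|\ph_u(t)\|_{L^\infty}\le C$, one has $1+|x| \le (1+C)\,(1+|\Ph_u(s,x)|)$ for all $s$ and $x$, and therefore $(1+|x|)^p \le (1+C)^p\,(1+|\Ph_u(s,x)|)^p$.

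Next I would set up the continuity estimate directly in the $\|\cdot\|^{L,M}_\si$-norm, in complete analogy with \eqref{continuityinequ}. For $0 \le r \le t \le 1$, a multi-index $\ga$, an index $p \in \N$, and $x \in \R^d$, interchanging differentiation and integration (justified by \eqref{ultraRoumieuGelfand} as before) gives
\[
\frac{(1+|x|)^p\,|\p^\ga_x \ph_u(t,x) - \p^\ga_x \ph_u(r,x)|}{\si^{p+|\ga|}\,p!\,|\ga|!\,L_p\,M_{|\ga|}}
\le \int_r^t \frac{(1+|x|)^p\,|\p^\ga_x (u(s)\o\Ph_u(s))(x)|}{\si^{p+|\ga|}\,p!\,|\ga|!\,L_p\,M_{|\ga|}}\,ds.
\]
Taking $r=0$ (recall $\ph_u(0)=0$) will yield the membership $\ph_u(t) \in \SrLM$, while letting $r \to t$ will yield continuity, provided the integrand can be bounded by an integrable function of $s$ independent of $\ga$, $p$, and $x$.

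To bound the integrand I would apply the weight comparison above and then Fa\`a di Bruno's formula \eqref{faa} to $\p^\ga_x(u(s)\o\Ph_u(s))$. The derivative factors are treated exactly as at the end of \Cref{MinfRoumieu}: after dividing by $|\ga|!\,M_{|\ga|}$, the resulting Fa\`a di Bruno sum is controlled by \Cref{fdbApplication} (using \eqref{eq:Childress} and the uniform bound of $\Ph_u(s)$ in $\cB^M_\la$) at the cost of a geometric factor $(d\la\ta)^{|\ga|}$ that is absorbed into $\si^{|\ga|}$ by enlarging $\si$. The decisive new point is that the polynomial weight transfers cleanly onto the outer derivative of $u$: writing $\al = k_1 + \cdots + k_\ell$ as in \eqref{faa}, the Gelfand--Shilov norm of $u$ gives $(1+|\Ph_u(s,x)|)^p\,|(\p^\al_x u(s))(\Ph_u(s,x))| \le \|u(s)\|^{L,M}_\rh\,\rh^{p+|\al|}\,p!\,|\al|!\,L_p\,M_{|\al|}$, so the factors $p!$ and $L_p$ cancel against the denominator of the target norm and only the harmless geometric term $((1+C)\rh/\si)^p$ survives in the $p$-direction. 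Choosing $\si$ large enough to dominate simultaneously $(1+C)\rh$ in the weight direction and $d\la\ta$ in the derivative direction — which is permitted precisely because we work in a Roumieu class — bounds the integrand by a constant multiple of $\|u(s)\|^{L,M}_\rh$, which is integrable by \eqref{ultraRoumieuGelfand}. This settles both claims at once.

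The main obstacle is exactly the interaction of the polynomial weight $(1+|x|)^p$ with the nonlinear change of variable $x \mapsto \Ph_u(s,x)$ inside the composition. Its resolution rests on two facts: that the $\cB^{\{M\}}$-theory already furnishes a uniform sup-bound $C$ for $\ph_u$, which makes the weight comparison possible, and that the $p!\,L_p$ weights cancel between the two sides, so that the inevitable geometric loss $(1+C)^p$ can be absorbed into the Roumieu parameter $\si$. It is worth noting that no property of $L$ beyond this cancellation enters the argument, which is consistent with $L_k \ge 1$ being the only hypothesis imposed on $L$.
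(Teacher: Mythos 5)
Your proposal is correct and follows essentially the same route as the paper's proof: bootstrap the flow and its uniform $\cB^{\{M\}}$-bounds from \Cref{MinfRoumieu}, compare the weights via $(1+|x|) \le (1+C)(1+|\Ph_u(s,x)|)$ (the paper's constant $C_2$), apply Fa\`a di Bruno together with \Cref{fdbApplication}, and absorb both geometric losses — $((1+C)\rh)^p$ in the weight direction and the $(d\la\ta)^{|\ga|}$ factor in the derivative direction — into an enlarged Roumieu parameter, exactly as the paper does with its choice $\ta=\max\{C_2\si, C_4 d\rh\}$. The cancellation of the $p!\,L_p$ factors that you highlight is precisely the mechanism in the paper's estimate as well.
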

		
		\begin{proof}
			Let $u \in \cF_{\cS^{\{M\}}_{\{L\}}}$ and let $\si$ be as in \eqref{ultraRoumieuGelfand}. 
			Then clearly $u \in \cF_{\cB^{\{M\}}}$, it maps into the step $\cB^{M}_\si$, and is integrable therein. 
			Due to \Cref{MinfRoumieu}, we thus know that $t \mapsto \Ph_u(t)$ is continuous into $\on{Diff}\cB^{\{M\}}$. 
			In particular, there is $\rh>0$ and $C_1>0$ such that
			\begin{equation} \label{eq:BMest}
				\sup_{t \in I} \|\ph_u(t)\|^M_{\rh} \le C_1	
			\end{equation} 
			For $0\le r \le t \le 1$, we have
			\begin{align*}
				\frac{(1+|x|)^p|\p^\ga_x\ph_u(t,x)-\p^\ga_x\ph_u(r,x)|}{p!|\ga|! L_p M_{|\ga|}}
				\le \int_r^t \frac{(1+|x|)^p|\p^\ga_x u(s)\circ\Ph_u(s,x)|}{p!|\ga|! L_p M_{|\ga|}}\,ds.
			\end{align*}
			By Fa\`a di Bruno's formula \eqref{faa} and \eqref{eq:BMest} (replacing $C_1$ by $1+ C_1$),  
			\begin{align*}
				\MoveEqLeft
				\frac{|\p^\ga_x (u(s)\circ\Ph_u)(s,x)|}{|\ga|!  M_{|\ga|}}
				\le (d\rh)^{|\ga|} \sum \frac{\al!}{k_1! \cdots k_l!} (dC_1)^{|\al|} 
				\frac{|\p^\al_x u(s)(\Ph_u(s,x))|}{|\al|! M_{|\al|}}.	
			\end{align*}	
			By \eqref{eq:BMest}, $(1+|x|)(1+|\Ph_u(s,x)|)^{-1}$ is uniformly bounded, say by $C_2$, in $s$ and $x$. 
			Thus,
			\begin{align*}
			 	\frac{(1+|x|)^p|\p^\al_x u(s)(\Ph_u(s,x))|}{p!|\al|! L_p M_{|\al|}} 
			 	&\le 
			 	C_2^p  \frac{(1+|\Ph_u(s,x)|)^p|\p^\al_x u(s)(\Ph_u(s,x))|}{p!|\al|! L_p M_{|\al|}} 
			 	\\
			 	& \le 
			 	C_2^p \si^{p+|\al|} \|u(s)\|^{L,M}_\si. 
			 \end{align*} 
			By \Cref{fdbApplication}, there are $C_3,C_4>0$ such that 
			$\sum \frac{\al!}{k_1! \cdots k_l!} (dC_1 \si)^{|\al|} \le C_3 C_4^{|\ga|}$.		
			For $\ta=\max\{C_2\si, C_4d\rh\}$ we hence obtain
			\begin{equation*}
				\|\ph_u(t,\cdot)-\ph_u(r,\cdot)\|^{L,M}_\ta \le C_3 \int_r^t \|u(s)\|^{L,M}_\si \,ds,
			\end{equation*}
			which shows continuity of $t \mapsto \ph_u(t) \in \SrLM$ since $s\mapsto \|u(s)\|^{L,M}_\si$ is integrable.
		\end{proof}
		
		\begin{proposition}
			\label{Roumieucomp}
			Let $M=(M_k)$ be a regular non-quasianalytic sequence. 
			For each time-dependent vector field $u \in \cF_{\DrM}$ the flow $t \mapsto \ph_u(t) \in \DrM$ 
			is continuous.
		\end{proposition}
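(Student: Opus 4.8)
The plan is to reduce the Roumieu compactly supported case to the case $\cB^{\{M\}}$ already settled in \Cref{MinfRoumieu}, and then to upgrade the conclusion by exploiting the fact that the flow of a compactly supported vector field remains supported in a fixed compact set. Thus the only genuinely new input beyond \Cref{MinfRoumieu} is support preservation, together with the topological bookkeeping needed to read off continuity in the Silva inductive limit $\DrM$.

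First I would observe that $\cF_{\DrM} \subseteq \cF_{\cB^{\{M\}}}$. Indeed, by \eqref{rcompsupp} there are a compact $K \subseteq \R^d$ with $\supp u(t) \subseteq K$ for all $t \in I$ and some $\si>0$ with $\int_0^1 \|u(t)\|^M_\si\,dt < \infty$; the latter is precisely the integrability condition \eqref{ultraRoumieuSobolev} required for membership in $\cF_{\cB^{\{M\}}}$. Hence \Cref{MinfRoumieu} applies and yields that the flow $t \mapsto \ph_u(t)$ exists, takes values in $\cB^{\{M\}}(\R^d,\R^d)$, and, by \Cref{claim:cont}, is continuous as a curve into a single Banach step $\cB^M_\tau(\R^d,\R^d)$ for some $\tau \ge \si$.

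Next I would establish that $\supp \ph_u(t) \subseteq K$ for every $t \in I$. If $x_0 \notin K$, then $u(s,x_0)=0$ for all $s$, since $\supp u(s) \subseteq K$; consequently the constant curve $x(t) \equiv x_0$ solves \eqref{ode} with initial value $x_0$, and by uniqueness it is the flow line through $x_0$. Therefore $\Ph_u(t,x_0)=x_0$, i.e.\ $\ph_u(t,x_0)=0$. As $\R^d \setminus K$ is open, this gives $\supp \ph_u(t) \subseteq K$ for all $t$.

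Finally I would combine the two facts. The Banach space $\cD^M_{K,\tau}(\R^d,\R^d)$ is exactly the closed subspace of $\cB^M_\tau(\R^d,\R^d)$ consisting of maps supported in $K$, equipped with the restriction of $\|\cdot\|^M_\tau$ (closedness follows because $\|\cdot\|^M_\tau$-convergence forces pointwise convergence). Since $t \mapsto \ph_u(t)$ is continuous into $\cB^M_\tau$ and takes values in this closed subspace, it is continuous into $\cD^M_{K,\tau}$; composing with the continuous inclusion $\cD^M_{K,\tau} \hookrightarrow \DrM$ (a step of the defining inductive limit, after replacing $\tau$ by an integer $\ell \ge \tau$, which only decreases the norm) gives the asserted continuity into $\DrM$. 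I expect the only delicate point to be this last bookkeeping at the level of the Silva inductive limit—one must ensure that the single continuity weight $\tau$ from \Cref{claim:cont} together with the fixed $K$ lands the whole curve inside one Banach step—but this is routine precisely because $\cD^M_{K,\tau}$ sits isometrically as a closed subspace of $\cB^M_\tau$.
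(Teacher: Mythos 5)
Your proof is correct and follows essentially the same route as the paper: reduce to \Cref{MinfRoumieu} via $\cF_{\DrM}\subseteq\cF_{\cB^{\{M\}}}$ and then control the support of $\ph_u(t)$, with the continuity into a single Banach step $\cB^M_\tau$ (hence into the Silva limit $\DrM$) supplied by \Cref{claim:cont}. The only cosmetic difference is in the support step: you use uniqueness of flow lines to get the sharper containment $\supp\ph_u(t)\subseteq K$, while the paper uses the displacement bound $\bigcup_{t\in I}\supp\ph_u(t)\subseteq B_{R+M}(0)$ with $M=\sup_{t\in I}\|\ph_u(t)\|_{L^\infty}$, which avoids invoking uniqueness explicitly; both are valid, since a fixed compact set is all that is needed.
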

		
		\begin{proof}
			There is $R>0$ such that $\bigcup_{t \in I}\supp u(t)$ is contained in the ball $B_R(0)$ with center $0 \in \R^d$
			and radius $R$.	
			It is easily seen that 
			$\bigcup_{t \in I} \supp \ph_u(t) \subseteq B_{R+M}(0)$, where $M=\sup_{t\in I}\|\ph_u(t)\|_{L^\infty(\R^d)}$. The rest follows from \Cref{MinfRoumieu}.
		\end{proof}
		
		Next we treat the Beurling analogues of the above spaces.
		
		\begin{proposition}
			\label{BeurlingSobolev}
			Let $M=(M_k)$ be a regular sequence and $p \in [1,\infty]$. 
			For each time-dependent vector field $u \in \cF_{W^{(M),p}}$ the flow $t \mapsto \ph_u(t) \in W^{(M),p}$ 
			is continuous.
		\end{proposition}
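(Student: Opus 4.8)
The plan is to rerun the proof of \Cref{MpRoumieu} (and of \Cref{MinfRoumieu} when $p=\infty$), but to keep careful track of the dependence on the seminorm index so as to obtain continuity in \emph{every} seminorm $\|\cdot\|^{M,p}_\si$, $\si>0$; this is what the Fr\'echet topology of $W^{(M),p}=\varprojlim_n W^{M,p}_{1/n}$ requires. The first step is to reduce to the Roumieu setting. Since $u\in\cF_{W^{(M),p}}$ satisfies \eqref{ultraBeurlingSobolev}, in particular $\int_0^1\|u(s)\|^{M,p}_1\,ds<\infty$; for $p<\infty$ the Sobolev inequality \Cref{SobolevInequality}, together with moderate growth and derivation-closedness of $M$, upgrades this to $\int_0^1\|u(s)\|^M_\rh\,ds<\infty$ for some $\rh>0$, and for $p=\infty$ it holds outright. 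Hence $u\in\cF_{\cB^{\{M\}}}$, and \Cref{MinfRoumieu} applies: $t\mapsto\Ph_u(t)$ is a curve of diffeomorphisms in $\on{Diff}\BrM$, there is $\la>0$ with $\sup_{t\in I}\|\ph_u(t)\|^M_\la\le C$, and by the argument for $(\bullet\bullet)$ the Jacobian stays bounded away from $0$ uniformly, so the change-of-variables estimate \eqref{integralest} is available (with $B=1$ when $p=\infty$).

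Now I fix an arbitrary $\si>0$. Interchanging differentiation and integration and applying Minkowski's integral inequality exactly as in \eqref{continuityinequ2} (resp.\ \eqref{continuityinequ} for $p=\infty$), one gets for $0\le r\le t\le1$
\[
\|\ph_u(t)-\ph_u(r)\|^{M,p}_\si\le\int_r^t\sup_{\ga}\frac{\|\p^\ga_x(u(s)\circ\Ph_u(s))\|_{L^p}}{\si^{|\ga|}|\ga|!M_{|\ga|}}\,ds.
\]
To the integrand I would apply Fa\`a di Bruno's formula \eqref{faa}, the change-of-variables estimate \eqref{integralest}, the log-convexity inequality \eqref{eq:Childress}, and the uniform flow bound $\|\ph_u(s)\|^M_\la\le C$; precisely as in \Cref{MpRoumieu} (resp.\ \Cref{MinfRoumieu} when $p=\infty$), with \eqref{multinomial} absorbing the $d$-powers, this bounds the integrand, for each $\ga$, by $B^{1/p}(d\la)^{|\ga|}$ times the Fa\`a di Bruno sum $\sum\frac{\al!}{k_1!\cdots k_l!}(d(C+1))^{|\al|}\frac{\|\p^\al_x u(s)\|_{L^p}}{|\al|!M_{|\al|}}$.

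The decisive step---and the only genuine departure from the Roumieu proof---is a trade-off between the target index $\si$ and a source index $\ta>0$ which I am free to shrink. Estimating $\|\p^\al_x u(s)\|_{L^p}\le\ta^{|\al|}|\al|!M_{|\al|}\|u(s)\|^{M,p}_\ta$ and invoking \Cref{fdbApplication} with $A=d(C+1)\ta$ bounds the Fa\`a di Bruno sum by $D\,E^{|\ga|}\|u(s)\|^{M,p}_\ta$, where $D,E$ depend only on $A$ and, crucially, $E\to0$ as $A\to0$. The $\ga$-dependent factor is then $(d\la E/\si)^{|\ga|}$, so, since $E\to0$ as $\ta\to0$, I may pick $\ta=\ta(\si)>0$ small enough that $d\la E\le\si$; the supremum over $\ga$ then collapses to give
\[
\|\ph_u(t)-\ph_u(r)\|^{M,p}_\si\le B^{1/p}D\int_r^t\|u(s)\|^{M,p}_\ta\,ds,
\]
whose right-hand side tends to $0$ as $r\to t$ because $\int_0^1\|u(s)\|^{M,p}_\ta\,ds<\infty$ by the \emph{Beurling} hypothesis \eqref{ultraBeurlingSobolev}. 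This proves continuity of $t\mapsto\ph_u(t)$ into $W^{M,p}_\si$ for every $\si$, hence into $W^{(M),p}$. The main obstacle is exactly this: unlike the Roumieu case, where one fixed large target index suffices, here continuity must hold in arbitrarily small seminorms, and the property $E\to0$ as $A\to0$ in \Cref{fdbApplication} is precisely what lets a small $\si$ be absorbed by a correspondingly small $\ta$---a move legitimated only by the Beurling control of $\int_0^1\|u(s)\|^{M,p}_\ta\,ds$ for every $\ta$.
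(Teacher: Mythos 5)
Your proof is correct and uses essentially the same mechanism as the paper: fix the uniform flow bound (with fixed $\la$, $C$) coming from one index via \Cref{MinfRoumieu}, then shrink the index on the derivatives of $u$ (legitimate by the Beurling hypothesis \eqref{ultraBeurlingSobolev}) and exploit the property $C \to 0$ as $A \to 0$ in \Cref{fdbApplication} to make the target seminorm index $\si$ arbitrarily small. Your writeup merely makes explicit what the paper's terse proof leaves implicit, including the case $p<\infty$, which the paper dismisses as ``analogous''.
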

		
		\begin{proof}
			First let $p = \infty$. 
			Take $u \in \cF_{\cB^{(M)}}$ and fix some $\rh_0>0$. Then $u$ is integrable in $\cB^M_{\rh_0}$. 
			The proof of \Cref{MinfRoumieu} (see \Cref{gluing}) implies that 
			there are $C,\la>0$ depending on $\rh_0$ such that 
			$\sup_{t \in I}\|\vh_u(t)\|^M_\la \le C$. 
			In the estimate of the integrand in \eqref{continuityinequ} now take any $\rh<\rh_0$ 
			and observe that the $\ta$ tends to $0$ 
			as $\rh$ tends to $0$, by \Cref{fdbApplication}. 
			Therefore the $\si$ may be chosen arbitrarily small, which yields continuity as 
			a map into $\cB^{(M)}$. 
			The case $p < \infty$ works analogously.
		\end{proof}

		\begin{proposition}
			\label{GSB}
			Let $M=(M_k)$ be a regular sequence and let $L = (L_k)$ satisfy $L_k \ge 1$. 
			For each time-dependent vector field $u \in \cF_{\SbLM}$ the flow $t \mapsto \ph_u(t) \in \SbLM$ 
			is continuous.
		\end{proposition}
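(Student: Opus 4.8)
The plan is to run the Gelfand--Shilov estimate of \Cref{gelfandshilovroumieuprop} and then replace its single, fixed choice of target seminorm index by the limiting device of \Cref{BeurlingSobolev}. To begin, observe that for $u \in \cF_{\SbLM}$ the integrability $\int_0^1 \|u(t)\|^{L,M}_\si\,dt < \infty$ holds for \emph{every} $\si > 0$. Taking $p=0$ in the Gelfand--Shilov seminorm and using $L_0 \ge 1$ gives $\|f\|^M_\si \le L_0\,\|f\|^{L,M}_\si$, so $u \in \cF_{\cB^{(M)}}$. By \Cref{BeurlingSobolev} (case $p=\infty$; the required bound is produced in its proof via \Cref{gluing}) the flow $t \mapsto \ph_u(t)$ is continuous, hence bounded, into $\cB^{(M)}$, and fixing one index $\rh > 0$ yields a constant $C_1 > 0$ with $\sup_{t \in I}\|\ph_u(t)\|^M_\rh \le C_1$. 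As in \Cref{gelfandshilovroumieuprop}, the $\al = 0$ part of this bound gives $|\ph_u(s,x)| \le C_1$, whence $1 + |x| \le C_2\,(1+|\Ph_u(s,x)|)$ with $C_2 := 1 + C_1$, uniformly in $s \in I$ and $x \in \R^d$. From here on $\rh$, $C_1$, $C_2$ are \emph{frozen}.

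For the continuity, fix $0 \le r \le t \le 1$ and estimate, for all $p \in \N$, $\ga \in \N^d$, $x \in \R^d$ (interchanging $\p^\ga_x$ and $\int_r^t$, justified because $u$ is integrable in every seminorm),
\[
\frac{(1+|x|)^p\,|\p^\ga_x\ph_u(t,x) - \p^\ga_x\ph_u(r,x)|}{p!\,|\ga|!\,L_p\,M_{|\ga|}}
\le \int_r^t \frac{(1+|x|)^p\,|\p^\ga_x(u(s)\o\Ph_u(s,x))|}{p!\,|\ga|!\,L_p\,M_{|\ga|}}\,ds .
\]
To the integrand I would apply Fa\`a di Bruno's formula \eqref{faa} exactly as in \Cref{gelfandshilovroumieuprop}: the bound $\sup_t\|\ph_u(t)\|^M_\rh \le C_1$ controls the factors $\p^{\de_i}\Ph_u$, and $1+|x| \le C_2(1+|\Ph_u(s,x)|)$ shifts the polynomial weight onto the inner argument, where it is absorbed into $\|u(s)\|^{L,M}_\si$. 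With $\si > 0$ a free seminorm index for $u$, this produces
\[
\frac{(1+|x|)^p\,|\p^\ga_x(u(s)\o\Ph_u(s,x))|}{p!\,|\ga|!\,L_p\,M_{|\ga|}}
\le C_2^p\,\si^p\,(d\rh)^{|\ga|}\,\|u(s)\|^{L,M}_\si \sum \frac{\al!}{k_1!\cdots k_l!}\,(dC_1\si)^{|\al|},
\]
the sum being over the index set of \eqref{faa}.

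The decisive step is to control this last sum by \Cref{fdbApplication} and to exploit its quantitative clause. It bounds the sum by $C_3\,C_4^{|\ga|}$, where $C_4$ depends only on $dC_1\si\,M_1$ and, crucially, $C_4 \to 0$ as $\si \to 0$ (recall $C_1$ is frozen). Collecting exponents, the target index is $\ta := \max\{C_2\si,\, C_4\,d\rh\}$, and integrating in $s$,
\[
\|\ph_u(t) - \ph_u(r)\|^{L,M}_\ta \le C_3 \int_r^t \|u(s)\|^{L,M}_\si\,ds .
\]
As $\si \to 0$ both contributions to $\ta$ vanish: the weight part $C_2\si$ trivially, and the composition part $C_4\,d\rh$ because $C_4 \to 0$ while $\rh$ stays fixed. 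Hence, given any prescribed $\ta_0 > 0$, I would choose $\si$ so small that $\ta \le \ta_0$; since $\si \mapsto \|\cdot\|^{L,M}_\si$ is nonincreasing and $\int_0^1 \|u(s)\|^{L,M}_\si\,ds < \infty$ for this $\si$, the right-hand side tends to $0$ as $t \to r$, proving continuity of $t \mapsto \ph_u(t)$ into $\cS^M_{L,\ta_0}$. As $\ta_0 > 0$ is arbitrary, this is continuity into $\SbLM = \varprojlim_n \cS^M_{L,1/n}$, as desired.

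I expect the only real obstacle to be this final, Beurling-specific point: forcing the target index $\ta$ all the way down to $0$. In the Roumieu setting of \Cref{gelfandshilovroumieuprop} one is content with \emph{some} finite $\ta$, whereas here the polynomial-weight index $C_2\si$ and the composition index $C_4\,d\rh$ must be sent to zero \emph{simultaneously}. This succeeds only because the clause ``$C \to 0$ as $A \to 0$'' of \Cref{fdbApplication} lets $C_4$ decay when $\si$ shrinks, and because freezing $\rh$ after a single choice keeps $C_1, C_2$ fixed so that shrinking $\si$ does not feed back into them; the Beurling hypothesis that $\int_0^1\|u(s)\|^{L,M}_\si\,ds<\infty$ for \emph{all} $\si$ is exactly what permits taking $\si$ arbitrarily small.
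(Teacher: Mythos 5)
Your proposal is correct and follows essentially the same route as the paper: the paper's proof of \Cref{GSB} is precisely the instruction to repeat \Cref{gelfandshilovroumieuprop}, observing that $\rh$ and $C_1$ (hence $C_2$) can be fixed uniformly for all small $\si$, so that $\ta=\max\{C_2\si, C_4 d\rh\}\to 0$ as $\si\to 0$ via the clause ``$C\to 0$ as $A\to 0$'' of \Cref{fdbApplication}. You have simply written out in full detail what the paper leaves as a one-line remark, including the correct identification of the Beurling-specific point.
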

		
		\begin{proof}
			Just repeat the proof of \Cref{gelfandshilovroumieuprop} and observe that for any choice of small $\si>0$ in the beginning of the proof, $\rh$ and $C_1$ can be chosen uniformly, and thus the $\ta$ in the end of the proof gets small as well.
		\end{proof}
		
		\begin{proposition}
			\label{DB}
			Let $M=(M_k)$ be a regular non-quasianalytic sequence. 
			For each time-dependent vector field $u \in \cF_{\DbM}$ the flow $t \mapsto \ph_u(t) \in \DbM$ 
			is continuous.
		\end{proposition}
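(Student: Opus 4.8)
The plan is to mirror the argument of \Cref{Roumieucomp}, which handled the Roumieu compactly supported class, but to invoke the global Beurling result \Cref{BeurlingSobolev} in place of the global Roumieu result \Cref{MinfRoumieu}. The only feature distinguishing $\DbM$ from $\BbM$ is the compact support condition, so once I know that the supports of the flow stay inside a fixed compact set and that $t \mapsto \ph_u(t)$ is continuous into $\BbM$, continuity into $\DbM$ should follow with little extra work.

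First I would establish uniform support control. Since $u \in \cF_{\DbM}$, there is a ball $B_R(0)$ with $\supp u(t) \subseteq B_R(0)$ for all $t \in I$. Because $M_0 = 1$, the seminorm $\|u(s)\|^M_\si$ dominates $\|u(s)\|_{L^\infty}$ for every $\si$, so by \eqref{bcompsupp} the quantity $c := \int_0^1 \|u(s)\|_{L^\infty}\,ds$ is finite; it bounds the displacement of every trajectory, $|\ph_u(t,x)| \le c$ for all $t,x$. Consequently, if $|x| > R + c$, then $|\Ph_u(s,x)| \ge |x| - c > R$ for all $s$, so the trajectory never enters $B_R(0)$, hence $u$ vanishes along it and the trajectory is constant, giving $\ph_u(t,x) = 0$. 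Thus $\bigcup_{t \in I} \supp \ph_u(t)$ is contained in the fixed compact set $K := \overline{B_{R+c}(0)}$.

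Next I would reduce to the global Beurling case. The continuous inclusion $\DbM \hookrightarrow \BbM$ together with the fact that \eqref{bcompsupp} is precisely condition \eqref{ultraBeurlingSobolev} for $p = \infty$ shows $u \in \cF_{\BbM}$, so \Cref{BeurlingSobolev} (applied with $p = \infty$) gives that $t \mapsto \ph_u(t)$ is continuous as a curve into $\BbM$. Finally, every $\ph_u(t)$ lies in $\cD^{(M)}_K$, and the Fr\'echet space $\cD^{(M)}_K = \varprojlim_\ell \cD^M_{K,1/\ell}$ carries exactly the subspace topology induced by $\BbM$, since its defining seminorms $\|\cdot\|^M_{1/\ell}$ coincide with the restrictions of the corresponding seminorms on $\BbM$. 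Hence continuity into $\BbM$ together with membership in $\cD^{(M)}_K$ yields continuity into $\cD^{(M)}_K$, and composing with the continuous inclusion $\cD^{(M)}_K \hookrightarrow \DbM$ gives continuity into $\DbM$.

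I expect the only genuinely delicate point to be this last topological identification: confirming that a curve continuous into the Fr\'echet space $\BbM$ which remains inside a single compactly supported step is continuous into the (LFS)-inductive limit $\DbM$. This is soft precisely because the support localization of the second paragraph confines the flow to one step $\cD^{(M)}_K$, so no estimate uniform over all compacta is needed. All the hard analytic work—the Fa\`a di Bruno bounds and the freedom to shrink $\si$ in the Beurling setting—is already packaged inside \Cref{BeurlingSobolev}.
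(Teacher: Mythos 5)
Your proof is correct and takes essentially the same route as the paper: the paper's own proof is precisely the two-step reduction you describe, namely support confinement argued as in \Cref{Roumieucomp} followed by an appeal to \Cref{BeurlingSobolev}, with the topological identification of $\cD^{(M)}_K$ as a closed subspace of $\BbM$ left implicit. Your write-up merely fills in the details (the displacement bound giving $\supp \ph_u(t) \subseteq \overline{B_{R+c}(0)}$, and the fact that continuity into $\BbM$ plus membership in a single step $\cD^{(M)}_K$ yields continuity into the inductive limit $\DbM$), all of which are sound.
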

		
		\begin{proof}
			That $\bigcup_{t\in I}\supp \ph_u(t)$ is bounded follows as in \Cref{Roumieucomp}. The rest follows from  
			\Cref{BeurlingSobolev}.
		\end{proof}

				\begin{remark} \label{rem:Beurling}
					Let us sketch an alternative proof of \Cref{BeurlingSobolev}, \Cref{GSB}, and \Cref{DB} 
					for \emph{strictly} regular $M$:
					First consider the case $\BbM$. 
					Let $u \in \cF_{\BbM}$ and set 
					\[
						L_k := \frac{1}{k!} \sum_{|\al| = k} \int_0^1 \|\p_x^\al u(t,\cdot)\|_{L^\infty}\, dt.
					\] 
					By \cite[Lemma 6]{Komatsu79b}, there is a strictly regular sequence $N \ge L$ such that 
					$(N_k/M_k)^{1/k} \to 0$. Hence, \Cref{MinfRoumieu} implies that 
					$t \mapsto \ph_u(t)$ is a continuous map $I \to \cB^{\{N\}}$, and thus 
					a continuous map $I \to \cB^{(M)}$, since $(N_k/M_k)^{1/k} \to 0$ entails that    
					$\cB^{\{N\}}$ is continuously included in $\cB^{(M)}$.

					If $p<\infty$ and $u \in \cF_{W^{(M),p}}$, then 
					$t \mapsto \ph_u(t)$ is a continuous map $I \to \cB^{(M)}$, by the previous paragraph. 
					That $t \mapsto \ph_u(t)$ actually has values in $W^{(M),p}$ follows easily from 
					\[
						\ph_u(t) = \int_0^t u(s) \o \Ph_u(s) \, ds,   	
					\]
					and its continuity as map $I \to W^{(M),p}$ is shown similarly, since for $t\ge r$,
					\[
						\ph_u(t) - \ph_u(r)  = \int_r^t u(s) \o \Ph_u(s) \, ds.   	
					\]

					The case $\SbLM$ is treated analogously and for $\DbM$ only a condition for the support 
					has to be checked.
				\end{remark}
		
		\subsection{Unweighted classes}

		\begin{proposition} \label{unweighted}
			Let $\cA$ be any of the classes $W^{\infty,p}$, for $1 \le p \le \infty$, $\cS$, and $\cD$. 
			For each time-dependent vector field $u \in \cF_{\cA}$ the flow $t \mapsto \ph_u(t) \in \cA$ 
			is continuous.
		\end{proposition}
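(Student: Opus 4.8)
The plan is to treat $\cB=W^{\infty,\infty}$ as the base case, playing here the role that $\cB^{\{M\}}$ plays in \Cref{MinfRoumieu}, and then to reduce the remaining classes to it. Since each of $W^{\infty,p}$, $\cS$, $\cD$ embeds continuously into $\cB$ (for $p<\infty$ via the Sobolev inequality \Cref{SobolevInequality}), every $u\in\cF_\cA$ satisfies $u\in\cF_\cB$, so once the $\cB$-case is settled we will know that $t\mapsto\Ph_u(t)$ is a curve in $\Diff\cB$ with $\inf_x\det d\Ph_u(t,x)>0$ and that $\ph_u(t)$ is bounded in $\cB$ uniformly in $t$. For the $\cB$-case itself I would rerun the construction of \Cref{MinfRoumieu}: the contraction operator $T$ and \Cref{claim:1} (that $T$ is $C^\infty$ and \eqref{derivative} holds) are insensitive to the weight $M$ and carry over verbatim, and the Neumann-series bound on $\p_\ph S(\ph,x)^{-1}$ is unchanged. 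The only substantive difference is that, in place of the $\cB^{\{M\}}$ implicit function theorem, one invokes the ordinary $C^\infty$ implicit function theorem to conclude that $x\mapsto\ph(x)$ is smooth with globally bounded derivatives of every order, and then glues over the finitely many subintervals exactly as in \Cref{gluing}. I expect this step to be the main obstacle, since it is where one must secure the uniform-in-$t$ bounds on every $\cB$-seminorm of $\ph_u(t)$ on which all the later estimates rely.

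Granting the $\cB$-regularity, continuity in $\cB$ is obtained by differentiating $\ph_u(t,x)=\int_0^t u(s,\Ph_u(s,x))\,ds$ under the integral and estimating, for each fixed $\ga$,
\[
\sup_{x}|\p^\ga_x\ph_u(t,x)-\p^\ga_x\ph_u(r,x)|\le \int_r^t \|\p^\ga_x(u(s)\o\Ph_u(s))\|_{L^\infty}\,ds.
\]
By Fa\`a di Bruno's formula \eqref{faa} the integrand is a \emph{finite} sum of products of factors $(\p^\al u(s))\o\Ph_u(s)$ with $|\al|\le|\ga|$ and derivatives $\p^{\de_i}\Ph_u(s)$; the latter are bounded in $L^\infty$ uniformly in $s$ by the $\cB$-bounds, so the integrand is $\le C_\ga\sup_{|\al|\le|\ga|}\|\p^\al u(s)\|_{L^\infty}$, which is integrable by \eqref{infSobolev}. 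Hence the integral tends to $0$ as $r\to t$, giving continuity in each seminorm. For $W^{\infty,p}$ with $p<\infty$ the same scheme applies after replacing the sup-norm by the $L^p$-norm and using Minkowski's integral inequality as in \eqref{continuityinequ2}: the change of variables \eqref{integralest}, available because $\inf_x\det d\Ph_u>0$, converts $\|(\p^\al u(s))\o\Ph_u(s)\|_{L^p}$ into $B^{1/p}\|\p^\al u(s)\|_{L^p}$, while the $\p^{\de_i}\Ph_u(s)$-factors remain bounded in $L^\infty$; taking $r=0$ shows in addition that $\ph_u(t)\in W^{\infty,p}$.

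For the Schwartz space $\cS$ I would follow the idea of \Cref{gelfandshilovroumieuprop}: since $\ph_u(s)$ is bounded in $\cB$, $\Ph_u(s,x)=x+\ph_u(s,x)$ satisfies $(1+|x|)\le C_2(1+|\Ph_u(s,x)|)$ uniformly in $s,x$. Then, for each fixed pair $(q,\ga)$ (with $q$ the polynomial degree),
\[
\|\ph_u(t)-\ph_u(r)\|^{(q,\ga)}\le \int_r^t \|u(s)\o\Ph_u(s)\|^{(q,\ga)}\,ds,
\]
and expanding $\p^\ga_x(u(s)\o\Ph_u(s))$ by \eqref{faa} each term carries a factor $(1+|x|)^q|(\p^\al u(s))(\Ph_u(s,x))|\le C_2^q(1+|\Ph_u(s,x)|)^q|(\p^\al u(s))(\Ph_u(s,x))|\le C_2^q\|u(s)\|^{(q,\al)}$ with $|\al|\le|\ga|$, times $L^\infty$-bounded derivatives of $\Ph_u(s)$. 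Thus $\|u(s)\o\Ph_u(s)\|^{(q,\ga)}\le C\sup_{|\al|\le|\ga|}\|u(s)\|^{(q,\al)}$, which is integrable by \eqref{Schwartz}; taking $r=0$ also shows $\ph_u(t)\in\cS$, and letting $r\to t$ yields continuity.

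Finally, the compactly supported case $\cD$ reduces to $\cB$ exactly as in \Cref{Roumieucomp}: there is a ball $B_R(0)$ containing $\bigcup_t\supp u(t)$, and if $\ph_u(t,x)\ne0$ then $u(s,\Ph_u(s,x))\ne0$ for some $s\le t$, forcing $|x|\le R+\sup_t\|\ph_u(t)\|_{L^\infty}$; hence the curve stays in a single step $\cD_K$ of $\cD=\varinjlim_K\cD_K$, on which the topology is induced by the $\cB$-seminorms, so that continuity follows from the $\cB$-case already established.
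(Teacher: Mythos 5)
Your proposal is correct, but for the core case $\cA=\cB$ it takes a genuinely different route from the paper. The paper does not rerun the fixed-point/implicit-function machinery of \Cref{MinfRoumieu}; instead it establishes the uniform-in-$t$ $\cB$-bounds by induction on the order of the derivative: using the Banach-space Fa\`a di Bruno formula (\Cref{banachfdb}) it isolates the top-order linear term $(\p_x u(s))(\Ph(s)(x))\,\p_x^{k}\Ph(s)(x)/k!$, absorbs all lower-order contributions into a remainder with integrable norm (by the induction hypothesis), and then applies Gronwall's inequality, adapting \cite[Theorem 8.9]{Younes10}. Your route --- contraction operator $T$, Neumann-series bound, implicit function theorem, gluing as in \Cref{gluing} --- also works, and you correctly flag the delicate point: the \emph{ordinary} $C^\infty$ implicit function theorem only gives smoothness of $x\mapsto\ph(x)$, not global boundedness of its derivatives, so you must supply the quantitative step yourself, e.g.\ by differentiating the fixed-point identity $\ph(x)=T(\ph(x),x)$ and inducting, using $\|\p_\ph S(\ph,x)^{-1}\|\le 2$ together with the global boundedness of every derivative of $T$; note that, unlike in \Cref{rem:Beurling}, the derivative data of a general $\cB$-field need not be majorized by any regular sequence, so a reduction to some $\cB^{\{N\}}$ via Yamanaka's theorem is unavailable and this bootstrap must be done directly. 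The paper's Gronwall argument buys a shorter, more elementary proof of this one case; your scheme buys structural uniformity with \Cref{MinfRoumieu} at the cost of the quantitative implicit function step. For the remaining cases your treatment coincides with the paper's, which handles $W^{\infty,p}$ for $p<\infty$, $\cS$, and $\cD$ precisely by the modifications of \Cref{MpRoumieu}, \Cref{gelfandshilovroumieuprop}, and \Cref{Roumieucomp} that you spell out, and your continuity argument in $\cB$ (treating each seminorm separately via \eqref{faa}) matches the paper's remark that the proof of \Cref{claim:cont} simplifies in this setting.
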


		\begin{proof}
			We may assume that there exists a unique continuous map $\ph_u$ such that
			\begin{equation} \label{sol}
			\ph_u(t,x) = \int_0^t u(s,x+\ph_u(s,x))\, ds, \quad x \in \R^d,~ t \in [0,1]. 
			\end{equation}
			Let us consider the case $\cA=\cB$. 
			That $\ph_u(t)$ is uniformly bounded with respect to $t$ in $\cB(\R^d,\R^d)$ is the content of 
			\cite[Theorem 8.9]{Younes10}, 
			at least for vector fields $u$ vanishing at infinity together with all derivatives. 
			But the proofs can be adjusted to work for the larger class of time-dependent $\cB$-vector fields. 
			Nevertheless we recall a proof of the uniform boundedness.

			It is clear, from \eqref{sol} and $\int_0^1 \|u(s)\|_{L^\infty} \, ds < \infty$, that 
			$\ph_u(t)$ is globally bounded, uniformly for all $t$.
			In order to show that $\p_x^k\vh_u(t)$ is globally bounded, uniformly in $t$, for each $k$, 
			we use induction on $k$. So suppose that we already know that $\p_x^h \ph_u(t)$ is globally bounded for each $t$ 
			and each $h < k$. 
			Recall that $\Ph(s) = \Ph_u(s) = \Id + \ph_u(s)$. By \Cref{banachfdb},
			\begin{align*}
			\frac{\p_x^k (u(s)\circ \Ph(s))(x)}{k!}
			&= 
			(\p_x u(s))(\Ph(s)(x))  \frac{\p_x^{k}\Ph(s)(x)}{k!} + R(s,x),		
			\end{align*}
			where $R(s,x)$ is the rest of the Fa\`a di Bruno formula which involves only derivatives $\p_x^j u(s)$ 
			of order $j\ge 2$ and derivatives $\p_x^h \Ph_u(t)$ of order $h \le k-1$. Using the induction hypothesis 
			it is easy to see that
			\[
				\int_0^1 \|R(s,x)\|_{L_k} \,ds =: C < \infty.
			\]
			With \eqref{sol} we have 
			\[
				\|\p_x^k\ph_u(t,x)\|_{L_k} \le \int_0^t \|(\p_x u)(s,x+ \ph_u(s,x))\|_{L_1}  \frac{\|\p_x^{k}(x + \ph_u(s,x))\|_{L_k}}{k!}  ds + C.
			\]
			Then Gronwall's inequality implies that $\p_x^k\vh_u(t)$ is globally bounded, uniformly in $t$. 
			That $t \mapsto \ph_u(t) \in \cB(\R^d,\R^d)$ is continuous is easily shown in a manner similar  
			to the proof of \Cref{claim:cont} in \Cref{MinfRoumieu} (the proof actually simplifies since each derivative 
			can be treated separately). 

			The remaining cases $\cA = W^{\infty,p}$, for $p<\infty$, $\cA = \cS$, and $\cA=\cD$ 
			can be proved with slight modifications of 
			the arguments used in the proofs of \Cref{MpRoumieu}, \Cref{gelfandshilovroumieuprop}, and 
			\Cref{Roumieucomp}, since $\cA$ is
			continuously included in $\cB$. 
		\end{proof}

\section{Continuity of the flow map} \label{sec:continuity}

In this section we prove that the map $u \mapsto \ph_u$ is continuous for all classical test function spaces.
We do not know if  similar results hold for the ultradifferentiable classes.  

For $f \in \Sp(\R^d,\R^d)$ we will write
\[
	\|f\|_{W^{k,p}} := \sum_{j=0}^k \Big(\int_{\R^d} \|f^{(j)}(x)\|_{L_j(\R^d,\R^d)}^p \, dx \Big)^{1/p}
\]
in the following.

\begin{lemma} \label{lem:Sobolev1}
	Let $p \in [1,\infty]$.
	Let $g,f \in W^{\infty,p}(\R^d,\R^d)$ and assume that $\Id + f$ is a diffeomorphism of $\R^d$
	with $c:= \inf_{x \in \R^d} \det d(x +f(x)) >0$. 
	Then, for all $k \in \N$ there is a constant $C=C(c,k)>0$, such that
	\[
		\|g \o (\Id + f) \|_{W^{k,p}} \le C \|g\|_{W^{k,p}} (1  + \|f\|_{W^{k,\infty}})^k.	
	\]
\end{lemma}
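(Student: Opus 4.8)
The plan is to reduce the estimate to the Banach-space Fa\`a di Bruno formula (\Cref{banachfdb}) combined with a change of variables that is controlled by the hypothesis $c>0$. Write $F := \Id + f$, so that $F$ is a diffeomorphism of $\R^d$ with $\det dF \ge c$ everywhere, $F^{(1)} = \Id + f^{(1)}$, and $F^{(m)} = f^{(m)}$ for $m \ge 2$. Since $f \in \Sp$, each derivative $f^{(\al)}$ lies in $\Sp$ and is therefore globally bounded by \Cref{SobolevInequality}, so $\|f\|_{W^{k,\infty}}$ is finite and all the quantities below make sense.

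First I would differentiate the composition. By \Cref{banachfdb}, $(g \o F)^{(k)}(x)$ is a fixed multiple of a symmetrization of a sum, over $1 \le j \le k$ and tuples $\al \in \N^j_{\ge 1}$ with $\al_1 + \cdots + \al_j = k$, of the terms $g^{(j)}(F(x)) \o (F^{(\al_1)}(x) \times \cdots \times F^{(\al_j)}(x))$. Taking operator norms and using submultiplicativity under composition and tensor product, together with the bounds $\|F^{(1)}(x)\| \le 1 + \|f\|_{W^{1,\infty}}$ and $\|F^{(\al_i)}(x)\| = \|f^{(\al_i)}(x)\|_{L_{\al_i}} \le \|f\|_{W^{k,\infty}}$ for $\al_i \ge 2$, every factor $\|F^{(\al_i)}(x)\|$ is at most $1 + \|f\|_{W^{k,\infty}}$. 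Since at most $j \le k$ such factors occur in each term, collecting the finitely many combinatorial constants into a single $C_k$ gives the pointwise estimate
\[
  \|(g \o F)^{(k)}(x)\|_{L_k} \le C_k\,(1 + \|f\|_{W^{k,\infty}})^k \sum_{j=1}^k \|g^{(j)}(F(x))\|_{L_j}.
\]

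Next I would pass to $L^p$-norms. For $p < \infty$, raising to the $p$-th power, integrating in $x$, and substituting $y = F(x)$ --- whose Jacobian factor $|\det dF(F^{-1}(y))|^{-1}$ is bounded by $1/c$ --- I would obtain
\[
  \Big(\int_{\R^d} \|(g \o F)^{(k)}(x)\|_{L_k}^p\,dx\Big)^{1/p}
  \le C_k\,(1 + \|f\|_{W^{k,\infty}})^k\, c^{-1/p}\,\Big(\int_{\R^d}\Big(\sum_{j=1}^k \|g^{(j)}(y)\|_{L_j}\Big)^p dy\Big)^{1/p},
\]
and Minkowski's inequality bounds the last factor by $\sum_{j=1}^k \|g^{(j)}\|_{L^p} \le \|g\|_{W^{k,p}}$. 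The order-zero contribution is treated identically, giving $\|g \o F\|_{L^p} \le c^{-1/p}\|g\|_{L^p}$. Summing over the orders $0, \ldots, k$ and using that $j \mapsto \|f\|_{W^{j,\infty}}$, $j \mapsto \|g\|_{W^{j,p}}$, and $(1 + \|f\|_{W^{k,\infty}})^j$ are nondecreasing, I can absorb all constants into $C(c,k) := \big(\sum_{j=0}^k C_j\big)\max(1, c^{-1})$, the bound on $c^{-1/p}$ being taken independent of $p$. The remaining case $p = \infty$ is simpler: since $F$ is surjective, $\sup_x \|g^{(j)}(F(x))\|_{L_j} = \|g^{(j)}\|_{L^\infty}$, so no Jacobian enters and the same bookkeeping applies.

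I expect the only genuinely delicate point to be the change of variables, which is precisely where the hypothesis $c > 0$ is used: it guarantees that $F$ is a global diffeomorphism whose inverse has Jacobian bounded by $1/c$, so that precomposition with $F$ is bounded on $L^p$. The Fa\`a di Bruno step is routine bookkeeping; the one thing worth checking is that the identity summand in $F^{(1)}$ is exactly what produces the $1$ in the factor $1 + \|f\|_{W^{k,\infty}}$ and causes no other trouble.
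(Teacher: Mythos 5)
Your proof is correct, but it takes a genuinely different route from the paper. The paper proves the lemma by induction on $k$: the base cases $k=0,1$ are handled by the change of variables bound $\|h \o (\Id+f)\|_{L^p} \le c^{-1/p}\|h\|_{L^p}$, and the inductive step uses the factorization $d(g\o(\Id+f)) = \big(dg\o(\Id+f)\big)\cdot(\mathbb 1 + df)$, splitting into two terms and applying the induction hypothesis to $dg$ --- so Fa\`a di Bruno is never invoked, and the exponent $k$ on $(1+\|f\|_{W^{k,\infty}})$ accrues one factor per unwound derivative. You instead expand $(g\o F)^{(k)}$ in one shot via \Cref{banachfdb}, bound each multilinear term by submultiplicativity (correctly noting that symmetrization does not increase the operator norm, that $F^{(1)} = \Id + df$ contributes the additive $1$, and that at most $j \le k$ factors occur, giving the exponent $k$), and then perform the substitution $y = F(x)$ exactly once, with Minkowski handling the sum over $j$. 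Both arguments hinge on the same use of $c>0$, namely that precomposition with $F$ is bounded on $L^p$ with norm $c^{-1/p} \le \max(1,c^{-1})$, and your dimension-free treatment of Fr\'echet derivatives matches the paper's convention for $\|\cdot\|_{W^{k,p}}$ in this section, so the constant indeed depends only on $c$ and $k$ as claimed. What each approach buys: your direct expansion makes the combinatorics and the origin of the exponent $k$ fully explicit and localizes the change of variables to a single step, at the cost of multilinear bookkeeping; the paper's induction is shorter and avoids Fa\`a di Bruno entirely, but its inductive step silently uses a Leibniz-type module estimate of the form $\|u\cdot v\|_{W^{k-1,p}} \lesssim \|u\|_{W^{k-1,p}}\|v\|_{W^{k-1,\infty}}$, which your version does not need to assume.
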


\begin{proof}
	We prove the assertion by induction on $k$.
	By assumption, $\|g \o (\Id + f)\|_{L^p} \le c^{-1/p} \|g\|_{L^p}$, thus the assertion holds for $k=0$.
	For $k=1$ we have,
	\begin{align*}
		\|d(g\circ(\Id+f))\|_{L^p}
		&\le   \|dg\circ(\Id+f)\|_{L^p} + \|dg\circ(\Id+f)\cdot df\|_{L^p}
		\\
		&\le c^{-1/p} \|dg\|_{L^p} + c^{-1} \|dg\|_{L^p} \|df\|_{L^\infty} 
		\\
		&\le c^{-1/p}\|g\|_{W^{1,p}} (1  + \|f\|_{W^{1,\infty}}).
	\end{align*}
			Now assume the statement holds for $k-1$. Then 
			\begin{align*}
		\|d(g\circ(\Id+f))\|_{W^{k-1,p}}
		&\le   \|dg\circ(\Id+f)\|_{W^{k-1,p}} + \|dg\circ(\Id+f)\cdot df\|_{W^{k-1,p}}
		\\
		&\le C\|dg\|_{W^{k-1,p}} (1  + \|f\|_{W^{k-1,\infty}})^{k-1} (1 + \|df\|_{W^{k-1,\infty}}) 
		\\
		&\le C \|g\|_{W^{k,p}} (1  + \|f\|_{W^{k,\infty}})^k. \qedhere
	\end{align*}
\end{proof}

\begin{lemma} \label{lem:Sobolev2}
	Let $p \in [1,\infty]$.
	Let $g,f_1,f_2 \in W^{\infty,p}(\R^d,\R^d)$ and assume that $\Id + f_i$ are diffeomorphisms of $\R^d$
	with $c_i:= \inf_{x \in \R^d} \det d(x +f_i(x)) >0$. 
	Then, for all $k \in \N$ there is a constant $C=C(c_i,k)>0$, such that
	\[
		\|g \o (\Id + f_1) - g \o (\Id + f_2) \|_{W^{k,p}} \le C \|g\|_{W^{k+1,\infty}} (1  +\max_{i=1,2} \|f_i\|_{W^{k,\infty}})^k \|f_1 - f_2\|_{W^{k,p}}. 	
	\]
\end{lemma}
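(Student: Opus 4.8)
The plan is to reduce the difference of compositions to a single product by interpolating linearly between $\Id + f_2$ and $\Id + f_1$ and applying the fundamental theorem of calculus. Setting $h_s := f_2 + s(f_1 - f_2)$ for $s \in [0,1]$, I would write
\[
	g \o (\Id + f_1) - g \o (\Id + f_2) = \int_0^1 \big( dg \o (\Id + h_s) \big)\cdot (f_1 - f_2)\, ds.
\]
First I would take the $W^{k,p}$-norm of both sides and apply Minkowski's integral inequality to move it inside the integral over $s$, reducing the claim to an estimate, uniform in $s$, of $\|(dg \o (\Id+h_s))\cdot(f_1-f_2)\|_{W^{k,p}}$.

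For fixed $s$, I would expand the $\ga$-th derivative (for $|\ga|\le k$) of this product by the Leibniz rule into finitely many terms of the form $\p^{\ga_1}(dg \o (\Id + h_s))\cdot \p^{\ga_2}(f_1-f_2)$ with $|\ga_1|+|\ga_2| \le k$. The crucial bookkeeping is to estimate every factor carrying derivatives of $g$ in the sup-norm and the factor involving $f_1 - f_2$ in the $L^p$-norm,
\[
	\|\p^{\ga_1}(dg \o (\Id+h_s))\cdot \p^{\ga_2}(f_1-f_2)\|_{L^p} \le \|\p^{\ga_1}(dg\o(\Id+h_s))\|_{L^\infty}\, \|\p^{\ga_2}(f_1-f_2)\|_{L^p}.
\]
This split is exactly why the statement carries $\|g\|_{W^{k+1,\infty}}$ (sup-norms, and one extra order of differentiation) rather than $\|g\|_{W^{k,p}}$: the composition factor always lands in $L^\infty$, and the single derivative of $g$ in $dg$ together with up to $k$ further derivatives accounts for order $k+1$.

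To control $\|\p^{\ga_1}(dg\o(\Id+h_s))\|_{L^\infty} \le \|dg\o(\Id+h_s)\|_{W^{|\ga_1|,\infty}}$, I would invoke \Cref{lem:Sobolev1} with $p$ replaced by $\infty$, applied to $dg$ in place of $g$ and to $h_s$ in place of $f$. This yields a bound $C\|dg\|_{W^{k,\infty}}(1+\|h_s\|_{W^{k,\infty}})^k \le C\|g\|_{W^{k+1,\infty}}(1 + \max_{i=1,2}\|f_i\|_{W^{k,\infty}})^k$, where I use that $h_s$ is a convex combination of $f_1$ and $f_2$, so $\|h_s\|_{W^{k,\infty}} \le \max_{i}\|f_i\|_{W^{k,\infty}}$. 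Summing over the finitely many Leibniz terms and over $|\ga|\le k$, and then integrating the resulting $s$-independent bound over $s\in[0,1]$, gives the asserted estimate.

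The one genuine subtlety, and what I expect to be the main obstacle, is that the interpolants $\Id + h_s$ need not be diffeomorphisms for intermediate $s$, so \Cref{lem:Sobolev1} is not directly applicable as stated. I would resolve this by observing that its $p=\infty$ case does not actually use the diffeomorphism hypothesis: the constant $c$ enters only through the factor $c^{-1/p}$, which equals $1$ when $p=\infty$, and the base case $\|G\o\psi\|_{L^\infty}\le\|G\|_{L^\infty}$ holds for an \emph{arbitrary} map $\psi$, the inductive step using only the chain and product rules in $L^\infty$. Thus the composition factor $dg\o(\Id+h_s)$ is controlled in sup-norm for every $s$ with no positivity requirement on $\det d(\Id + h_s)$. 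The hypotheses $c_i>0$ are needed only to guarantee, again via \Cref{lem:Sobolev1}, that the endpoint compositions $g\o(\Id+f_i)$ lie in $W^{k,p}$ in the first place, so that the difference on the left-hand side is well defined.
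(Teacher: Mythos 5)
Your proof is correct, but it takes a genuinely different route from the paper's. The paper proceeds by induction on $k$: the base case is the mean value theorem, and the inductive step rests on the algebraic splitting
\[
d(g\circ(\Id+f_1)) - d(g\circ(\Id+f_2)) = \big(dg\circ(\Id+f_1)-dg\circ(\Id+f_2)\big)(\mathbb 1+df_2) + dg\circ(\Id+f_1)\,(df_1-df_2),
\]
where the first summand is handled by the induction hypothesis (with $dg$ in place of $g$) and the second by \Cref{lem:Sobolev1} at $p=\infty$ — crucially, compositions only ever occur with $\Id+f_1$ and $\Id+f_2$, which are diffeomorphisms by hypothesis, so \Cref{lem:Sobolev1} applies verbatim. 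You instead linearize by the fundamental theorem of calculus along the segment $\Id+h_s$, then use Leibniz, H\"older, and \Cref{lem:Sobolev1} at $p=\infty$ for the interpolants; since $\Id+h_s$ need not be a diffeomorphism, this forces you to re-open the proof of \Cref{lem:Sobolev1}, and your resolution is correct: in the $p=\infty$ case the factor $c^{-1/p}$ equals $1$, the base case is a trivial sup estimate valid for any map, and the induction uses only the chain and product rules, so no positivity of $\det d(\Id+h_s)$ is needed. What your approach buys: no induction at the level of \Cref{lem:Sobolev2} itself, a constant that is visibly independent of $c_1,c_2$ (these enter only to make the individual compositions lie in $W^{k,p}$ when $p<\infty$; in fact your integral identity shows directly that the \emph{difference} lies in $W^{k,p}$), and a formula that makes the shape $\|g\|_{W^{k+1,\infty}}\,\|f_1-f_2\|_{W^{k,p}}$ of the bound transparent. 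What the paper's approach buys: it never leaves the class of diffeomorphisms, so the auxiliary lemma can be cited as stated, without any inspection of its proof.
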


\begin{proof}
	Induction on $k$. For $k=0$,
	\begin{align*}
		|g \o (\Id + f_1) - g \o (\Id + f_2) | \le \|dg\|_{L^\infty} |f_1 -f_2| 
	\end{align*}
	and hence $\|g \o (\Id + f_1) - g \o (\Id + f_2) \|_{L^p} \le \|g\|_{W^{1,\infty}} \|f_1 -f_2\|_{L^p}$. 
	Suppose that the claim holds for $k-1$. 
	Then, using the induction hypothesis and \Cref{lem:Sobolev1} for $p =\infty$, 
	\begin{align*}
		\MoveEqLeft
		\|d(g \o (\Id + f_1)) - d(g \o (\Id + f_2)) \|_{W^{k-1,p}} 
		\\
		&\le  \|(dg \o (\Id + f_1) - dg \o (\Id + f_2)) (\mathbb 1 + df_2)\|_{W^{k-1,p}} 
		\\&\quad + 
		\|dg \o (\Id + f_1)(df_1-df_2)\|_{W^{k-1,p}} 
		\\
		&\le 
		C \|dg\|_{W^{k,\infty}} (1  +\max_{i=1,2} \|f_i\|_{W^{k-1,\infty}})^{k-1} 
		\|f_1 - f_2\|_{W^{k-1,p}}(1 + \|f_2\|_{W^{k-1,\infty}}) 
		\\&\quad + 
		 \|dg \o (\Id + f_1)\|_{W^{k-1,\infty}}  \|df_1-df_2\|_{W^{k-1,p}} 
		\\
		&\le 
		C \|dg\|_{W^{k,\infty}} (1  +\max_{i=1,2} \|f_i\|_{W^{k-1,\infty}})^{k-1} 
		\|f_1 - f_2\|_{W^{k-1,p}}(1 + \|f_2\|_{W^{k-1,\infty}}) 
		\\&\quad + 
		C \|dg\|_{W^{k-1,\infty}} (1 + \|f_1\|_{W^{k-1,\infty}})^{k-1}  \|df_1-df_2\|_{W^{k-1,p}}
		\\
		&\le 
		2C \|g\|_{W^{k+1,\infty}} (1  +\max_{i=1,2} \|f_i\|_{W^{k,\infty}})^k \|f_1 - f_2\|_{W^{k,p}}. \qedhere
	\end{align*}
\end{proof}

\begin{theorem} \label{thm:contflowmap}
  Let $1 \le p \le \infty$.
  The mapping 
  \begin{equation} \label{Xtomu}
    L^1([0,1],\Sp(\R^d,\R^d)) \ni u \mapsto \ph_u \in C([0,1],\Sp(\R^d,\R^d))
  \end{equation}
  is continuous.
\end{theorem}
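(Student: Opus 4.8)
The plan is to establish a \emph{local Lipschitz estimate} for the flow map in each Fr\'echet seminorm and to deduce continuity from it. Recall that $W^{\infty,p}$ carries the seminorms $\|\cdot\|_{W^{k,p}}$, $k \in \N$, so that $L^1([0,1],W^{\infty,p})$ is topologized by $\tilde p_k(u) := \int_0^1 \|u(s)\|_{W^{k,p}}\, ds$, and continuity of \eqref{Xtomu} amounts to showing that for each fixed $u$ and each $k$ one has $\sup_{t} \|\ph_{u}(t) - \ph_{v}(t)\|_{W^{k,p}} \to 0$ as $v \to u$. Throughout I would restrict $v$ to a bounded neighborhood $\cU$ of $u$ (a set on which all the $\tilde p_m$ are bounded), so that every constant produced below is uniform over $\cU$.

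First I would record the necessary \emph{a priori bounds}, uniform over $\cU$. By the Gronwall argument in \Cref{unweighted}, combined with the Sobolev inequality \Cref{SobolevInequality} (when $p<\infty$) to pass from $L^p$- to $L^\infty$-control of derivatives, one obtains constants $R_k$ with $\sup_{s \in I}\|\ph_w(s)\|_{W^{k,\infty}} \le R_k$ for all $w \in \cU$. For the Jacobian I would invoke Liouville's formula
\[
	\det d\Ph_w(t,x) = \exp\Big(\int_0^t (\on{div} w)(s,\Ph_w(s,x))\, ds\Big),
\]
and bound $\|\on{div} w(s)\|_{L^\infty} \le C\|w(s)\|_{W^{m+1,p}}$ (via \Cref{SobolevInequality} with $m = \lfloor d/p\rfloor+1$ when $p<\infty$, directly when $p=\infty$); this yields a uniform lower bound $c_0 := \exp(-C\,\tilde p_{m+1}(w)) > 0$ on $\det d\Ph_w$. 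These are exactly the hypotheses $c>0$ and the $W^{k,\infty}$-bounds required to feed $\Ph_w$ into \Cref{lem:Sobolev1} and \Cref{lem:Sobolev2}.

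The core of the argument is the identity
\[
	\ph_u(t) - \ph_v(t) = \int_0^t \big(u(s)\o\Ph_u(s) - v(s)\o\Ph_v(s)\big)\, ds
\]
together with the splitting
\[
	u(s)\o\Ph_u(s) - v(s)\o\Ph_v(s) = \big(u(s)\o\Ph_u(s) - u(s)\o\Ph_v(s)\big) + (u(s)-v(s))\o\Ph_v(s).
\]
To the first bracket I would apply \Cref{lem:Sobolev2} (with $g=u(s)$, $f_1=\ph_u(s)$, $f_2=\ph_v(s)$), bounding its $W^{k,p}$-norm by $C\|u(s)\|_{W^{k+1,\infty}}(1+R_k)^k\|\ph_u(s)-\ph_v(s)\|_{W^{k,p}}$, where $\|u(s)\|_{W^{k+1,\infty}} \le C\|u(s)\|_{W^{k+1+m,p}}$ is integrable in $s$ by \Cref{SobolevInequality}. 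To the second bracket I would apply \Cref{lem:Sobolev1}, bounding its $W^{k,p}$-norm by $C\|u(s)-v(s)\|_{W^{k,p}}(1+R_k)^k$. Writing $a(t):=\|\ph_u(t)-\ph_v(t)\|_{W^{k,p}}$ and $K(s):=C\|u(s)\|_{W^{k+1,\infty}}(1+R_k)^k$ (an $L^1$ function whose integral is bounded over $\cU$), these estimates give
\[
	a(t) \le C\,(1+R_k)^k\,\tilde p_k(u-v) + \int_0^t K(s)\,a(s)\,ds.
\]
Gronwall's inequality then yields $\sup_t a(t) \le C'\,\tilde p_k(u-v)$ with $C'$ depending only on the uniform bounds over $\cU$. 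Since $\tilde p_k(u-v)\to 0$ as $v\to u$ while $C'$ stays bounded, this proves continuity in the $k$-th seminorm, and letting $k$ vary gives continuity of \eqref{Xtomu}.

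I expect the main obstacle to be the a priori step: securing the uniform lower bound on the Jacobian and the uniform $W^{k,\infty}$-bounds on $\ph_v$ \emph{over an entire neighborhood} of $u$ rather than for a single vector field, since the constants $c$ in \Cref{lem:Sobolev1} and \Cref{lem:Sobolev2} depend precisely on these quantities. Once this uniformity is secured, the splitting together with the two composition lemmas reduces everything to a routine Gronwall estimate, and the bound obtained is in fact locally Lipschitz in each seminorm.
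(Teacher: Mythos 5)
Your proposal is correct, and its core is exactly the paper's argument: the same splitting $u(s)\o\Ph_u(s)-v(s)\o\Ph_v(s)=\big(u(s)\o\Ph_u(s)-u(s)\o\Ph_v(s)\big)+(u(s)-v(s))\o\Ph_v(s)$, the same application of \Cref{lem:Sobolev2} to the first bracket and \Cref{lem:Sobolev1} to the second, and the same concluding Gronwall estimate yielding a Lipschitz bound $\|\ph_u-\ph_v\|_{C(I,W^{k,p})}\le C\,\|u-v\|_{L^1(I,W^{k,p})}$ with constants uniform over bounded sets. Where you genuinely differ is in the a priori step feeding the two lemmas. The paper does not re-derive these bounds internally: it quotes \cite{NenningRainer16} (Theorem 5.6) for the case $p=\infty$ and \cite{NenningRainer16} (Proposition 3.6) to conclude that both $u\mapsto\Ph_u$ and $u\mapsto\Ph_u^{-1}$ are \emph{bounded} into $C(I,\Diff_0\cB)$, and then obtains the uniform lower bound $\det d\Ph_u\ge c$ on bounded sets via the inverse-flow argument following ($\bullet\bullet$): an upper bound on $\det d\Ph_{u}^{-1}=\det d\Ph_v$ forces the lower bound on $\det d\Ph_u$. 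You instead extract the uniform $W^{k,\infty}$-bounds from the Gronwall induction of \Cref{unweighted} (correctly observing that its constants depend only on the quantities $\int_0^1\|\p_x^ju(s)\|_{L^\infty}\,ds$, hence are uniform on bounded sets) and get the Jacobian bound from Liouville's formula, which indeed remains valid in this Carath\'eodory setting since the linearized flow equation holds for a.e.\ $t$. Your route is more self-contained: it avoids both external citations, produces an explicit constant $c_0$, and treats $p=\infty$ by the same argument rather than by reference; the paper's route buys brevity and reuses machinery already established elsewhere.

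One slip of terminology: a \emph{bounded neighborhood} of $u$ does not exist, since $L^1([0,1],\Sp(\R^d,\R^d))$ is not normable. This is harmless here. For fixed $k$ your constants involve only the finitely many seminorms $\tilde p_j$ with $j\le k+1+m$, so you may restrict $v$ to the genuine neighborhood $\{v:\tilde p_{k+1+m}(v-u)\le 1\}$, on which those seminorms are bounded; alternatively, since the space is metrizable, it suffices to test continuity along sequences $v_n\to u$, and $\{v_n\}\cup\{u\}$ is a bounded set --- which is implicitly how the paper's own formulation (``let $u,v$ be in a bounded subset'') yields continuity.
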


\begin{proof}
	The case $p=\infty$ follows immediately from \cite[Theorem 5.6]{NenningRainer16}
	(the assumption that the vector fields vanish together with all its derivatives as $|x| \to \infty$ 
	was not used in the proof).

	Let $p \in [1,\infty)$. 
	Then $L^1([0,1],\Sp(\R^d,\R^d))$ is continuously included in $L^1([0,1],\cB(\R^d,\R^d))$.
	The result for $p=\infty$
	together with \cite[Proposition 3.6]{NenningRainer16} shows that both $u \mapsto \Ph_u$ and
	$u \mapsto \Ph_u^{-1}$ are bounded into $C(I,\Diff_0 \cB)$. 
	And thus the arguments after ($\bullet \bullet$) on p.\ \pageref{detpositive} imply that for every bounded set $U \subseteq L^1(I,\cB(\R^d,\R^d))$ 
	there is a constant $c>0$ such that 
	\begin{equation} \label{eq:lowerbound}
		\det d\Ph_u(t,x) \ge c 
	\end{equation}
	for all $x\in \R^d$, $t \in I$, and $u \in U$.

  Let $u,v$ be in a bounded subset of $L^1([0,1],\Sp(\R^d,\R^d))$ 
  and let $\ph_u,\ph_v \in C([0,1],\Sp(\R^d,\R^d))$ be the corresponding 
  flows. Thanks to \eqref{eq:lowerbound} we may apply
  \Cref{lem:Sobolev1} and \Cref{lem:Sobolev2} to obtain
	\begin{align*}
			\MoveEqLeft
			\|\ph_u(t) - \ph_v(t)\|_{W^{k,p}} 
			\le \int_0^t \|u(s)\circ\Ph_u(s) - v(s)\circ\Ph_v(s)\|_{W^{k,p}} \,ds
			\\
			& \le \int_0^t \|u(s)\circ\Ph_u(s) - u(s)\circ\Ph_v(s)\|_{W^{k,p}} + \|(u(s) - v(s))\circ\Ph_v(s)\|_{W^{k,p}} \,ds \\
			& \le C \int_0^t \|u(s)\|_{W^{k+1,\infty}} \|\ph_u(s) - \ph_v(s)\|_{W^{k,p}} 	
			+ \|u(s) - v(s)\|_{W^{k,p}} \,ds	
	\end{align*}
	for a constant $C$ independent of $u,v$.
	Then Gronwall's inequality implies
	\[
	\|\ph_u(t) - \ph_v(t)\|_{W^{k,p}} \le C_1 \|u- v\|_{L^1(I,W^{k,p})} \exp (C_2\|u\|_{L^1(I,W^{k+1,\infty})}),
	\]
	and consequently, $\|\ph_u - \ph_v\|_{C(I,{W^{k,p}})} \le C_3 \|u- v\|_{L^1(I,W^{k,p})}$. 
	This implies the result. 
\end{proof}

\begin{theorem}
  The mappings 
  \begin{align*} 
    L^1([0,1],\cS(\R^d,\R^d)) \ni u &\mapsto \ph_u \in C([0,1],\cS(\R^d,\R^d)),
    \\
    L^1([0,1],\cD_K(\R^d,\R^d)) \ni u &\mapsto \ph_u \in C([0,1],\cD(\R^d,\R^d))
  \end{align*}
  are continuous.
\end{theorem}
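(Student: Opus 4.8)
The plan is to run the same scheme as in \Cref{thm:contflowmap}: bound $\ph_u(t)-\ph_v(t)$ in each seminorm by the time integral of a composition difference, split that difference as $u(s)\o\Ph_u(s)-u(s)\o\Ph_v(s)$ plus $(u(s)-v(s))\o\Ph_v(s)$, and close with Gronwall's inequality. What is new compared with the Sobolev case is only the polynomial weight $(1+|x|)^q$ carried by the Schwartz seminorms $\|\cdot\|^{(q,\al)}$, so the core of the work is to establish weighted analogues of \Cref{lem:Sobolev1} and \Cref{lem:Sobolev2}. Throughout I fix a bounded subset of $L^1([0,1],\cS(\R^d,\R^d))$ and keep all constants uniform over it; by \Cref{unweighted} and the uniform bounds established in the proof of \Cref{thm:contflowmap}, the flows $\ph_u,\ph_v$ are then uniformly bounded in $\cB(\R^d,\R^d)$, and \eqref{eq:lowerbound} supplies a uniform lower bound $\det d\Ph_u(t,x)\ge c>0$.

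The key device is the weight comparison already exploited in \Cref{gelfandshilovroumieuprop}: since $\sup_{s,x}|\ph_u(s,x)|\le R$ uniformly, one has $(1+|x|)\le (1+R)(1+|\Ph_u(s,x)|)$ and symmetrically for $\Ph_v$. This lets me transport the factor $(1+|x|)^q$ onto the argument $\Ph_\bullet(s,x)$ of $u$ (resp.\ of $u-v$), so that after applying Fa\`a di Bruno's formula \eqref{faa} and inserting the uniform bounds on the derivatives $\p^\al_x\Ph_u(s)$, the resulting terms are controlled by the \emph{Schwartz} seminorms of $u$ (resp.\ of $u-v$) rather than merely by $\cB$-seminorms. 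Carrying this out yields, for each $(q,\al)$, estimates of the form $\|u(s)\o\Ph_u(s)-u(s)\o\Ph_v(s)\|^{(q,\al)}\le C\,\|u(s)\|_{*}\,\|\ph_u(s)-\ph_v(s)\|_{*}$ and $\|(u(s)-v(s))\o\Ph_v(s)\|^{(q,\al)}\le C\,\|u(s)-v(s)\|_{*}$, where $\|\cdot\|_{*}$ abbreviates a maximum over finitely many Schwartz seminorms and the constants are uniform over the bounded set. Feeding these into the integral inequality and applying Gronwall gives continuity of $u\mapsto\ph_u$ into $C([0,1],\cS)$.

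For the compactly supported map I first note, exactly as in \Cref{Roumieucomp} and \Cref{DB}, that the uniform $L^\infty$-bound on $\ph_u$ confines $\bigcup_{t}\supp\ph_u(t)$ to a fixed compact set $K'$, uniformly over a bounded set of $u$; hence $t\mapsto\ph_u(t)$ takes values in $C([0,1],\cD_{K'}(\R^d,\R^d))$. On $\cD_{K'}$ the polynomial weights are bounded, so its topology is induced by the $W^{k,\infty}$-seminorms, and the claim reduces to the $p=\infty$ case of \Cref{thm:contflowmap} applied to functions supported in $K'$.

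The main obstacle will be the weighted difference estimate, i.e.\ the Schwartz analogue of \Cref{lem:Sobolev2}. One must simultaneously expand $\p^\al_x(u\o\Ph_u-u\o\Ph_v)$ by Fa\`a di Bruno so that the difference $\ph_u-\ph_v$ and its derivatives appear as factors, move the weight $(1+|x|)^q$ onto the correct argument so that a genuine Schwartz seminorm of $u$ controls the result, and keep every constant uniform over the bounded set. This bookkeeping, rather than any single hard inequality, is the delicate point; once the weighted composition estimates are in place, Gronwall closes the argument precisely as in \Cref{thm:contflowmap}.
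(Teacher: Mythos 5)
Your proposal is correct and follows essentially the same route as the paper: the paper likewise proves Schwartz-seminorm analogues of \Cref{lem:Sobolev1} and \Cref{lem:Sobolev2} (with the weight transported through the composition exactly as you describe) and then reruns the Gronwall scheme of \Cref{thm:contflowmap}, while the $\cD$-case is reduced to the $\cB$-case via the fixed compact support. The only cosmetic difference is that the paper's difference lemma measures $g$ in $W^{k+1,\infty}$ and puts the weight only on $f_1-f_2$, whereas you allow Schwartz seminorms on both factors; this is harmless since $u$ is Schwartz.
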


\begin{proof}
	The case $\cD$ is immediate from the case $\cB$. 
	
	It is easy to see that for $g,f,f_1,f_2 \in \cS(\R^d,\R^d)$ we have the following analogues of 
	\Cref{lem:Sobolev1} and \Cref{lem:Sobolev2}:   	
	\begin{align*}
		\|g \o (\Id + f) \|^{(p,k)} &\le\! C \|g\|^{(p,k)} (1  + \|f\|_{W^{k,\infty}})^k,
		\\
		\|g \o (\Id + f_1) - g \o (\Id + f_2) \|^{(p,k)} &\le\! C \|g\|_{W^{k+1,\infty}} (1  +\max_{i=1,2} \|f_i\|_{W^{k,\infty}})^k \|f_1 - f_2\|^{(p,k)}.
	\end{align*}
	Then the case $\cS$ can be proved in a similar way as \Cref{thm:contflowmap}.
\end{proof}

	\section{Application: The Bergman space on the polystrip is ODE-closed}
	\label{sec:Bergman}

		In this section we prove that the $p$-Bergman space $A^p(S_{(r)})$ 
		on the polystrip $S_{(r)} := \{|\Im(z)| <r\}^d \subseteq \C^d$ 
		is continuously included in $W^{\mathbf{1}, p}_{a/r}(\R^d,\C)$ and continuously  
		contains $W^{\mathbf{1}, p}_{b/r}(\R^d,\C)$ if $a,b$ are suitable constants, 
		where $\mathbf{1} = (1,1,\ldots)$. 
		As an application we obtain that the scale of $p$-Bergman spaces on the polystrip 
		with variable width is ODE-closed.

		\subsection{Bergman spaces on polystrips}
	
		Let $S_r := \{z \in \C: |\Im(z)|<r\}$ be the horizontal open strip centered at the real line with width $2r>0$.
		For $r_1, \dots , r_d > 0$ consider the polystrip 
		$S_{(r_i)} := S_{r_1} \times \cdots \times S_{r_d}$, in particular, $S_r^d = S_{(r)}$. 
		For $p \in [1,\infty]$, we consider the \emph{$p$-Bergman space}
		\[
		A^p(S_{(r_i)}):= \cH(S_{(r_i)})\cap L^p(S_{(r_i)}),
		\]
		i.e., the space of holomorphic $L^p$-functions on $S_{(r_i)}$.
		Endowed with the $L^p$-norm, it is a complex Banach space, which is a direct consequence of the following Lemma.
		
		\begin{lemma}
			\label{Bergmancomplete}
			Let $0<l_i<r_i$ for $1\le i \le d$. Then there exists $C>0$ such that 
			\[
			\|F\|_{L^\infty(S_{(l_i)})} \le C \|F\|_{L^p(S_{(r_i)})} \quad \text{ for } F \in A^p(S_{(r_i)}).
			\]
		\end{lemma}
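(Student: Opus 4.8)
The plan is to deduce the estimate from the sub–mean value property of holomorphic functions, realized over polydiscs contained in the larger polystrip. The case $p=\infty$ is immediate, since $S_{(l_i)} \subseteq S_{(r_i)}$ gives $\|F\|_{L^\infty(S_{(l_i)})} \le \|F\|_{L^\infty(S_{(r_i)})}$, so that $C=1$ works; from now on I assume $p<\infty$.

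I would fix a point $z_0 = (z_{0,1},\ldots,z_{0,d}) \in S_{(l_i)}$, so that $|\Im(z_{0,i})| < l_i$ for each $i$, and set $\rho_i := r_i - l_i > 0$. Then the open polydisc $P := \prod_{i=1}^d \{w_i \in \C : |w_i - z_{0,i}| < \rho_i\}$ is contained in $S_{(r_i)}$: indeed, whenever $|w_i - z_{0,i}| < \rho_i$ one has $|\Im(w_i)| \le |\Im(z_{0,i})| + |w_i - z_{0,i}| < l_i + \rho_i = r_i$. Since the real part is unconstrained in a strip, the imaginary direction is the only thing to check, and this is exactly what the choice of $\rho_i$ secures.

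The key step is the pointwise estimate
\[
|F(z_0)|^p \le \frac{1}{\prod_{i=1}^d \pi \rho_i^2} \int_P |F(w)|^p \, dV(w),
\]
which I would obtain by iterating the one–variable solid sub–mean value inequality. For each fixed choice of the remaining coordinates, the slice $w_i \mapsto F(\ldots, w_i, \ldots)$ is holomorphic, so $|F|^p$ is subharmonic in that variable (as $\log|F|$ is subharmonic and $t \mapsto e^{pt}$ is convex and nondecreasing), and the one–variable area mean estimate $|f(a)|^p \le (\pi \rho^2)^{-1} \int_{|w-a|<\rho} |f|^p \, dA$ applies. Applying this successively in the coordinates $w_1, \ldots, w_d$ and invoking Fubini yields the displayed polydisc inequality. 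Bounding the integral over $P$ by the integral over all of $S_{(r_i)}$ and taking $p$-th roots gives $|F(z_0)| \le \big(\prod_{i=1}^d \pi \rho_i^2\big)^{-1/p}\, \|F\|_{L^p(S_{(r_i)})}$; since the constant does not depend on $z_0 \in S_{(l_i)}$, taking the supremum over $z_0$ finishes the proof with $C := \big(\prod_{i=1}^d \pi (r_i - l_i)^2\big)^{-1/p}$.

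The argument is essentially routine, and I do not expect a genuine obstacle. The only point requiring slight care is the justification of the sub–mean value inequality in several variables, i.e.\ the plurisubharmonicity of $|F|^p$, together with the Fubini interchange—but both are standard and harmless given that $F \in A^p(S_{(r_i)})$ is in particular $L^p$ on $S_{(r_i)}$.
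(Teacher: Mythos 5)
Your proof is correct and follows exactly the route the paper indicates: its one-line proof invokes the mean value inequality for subharmonic functions applied to the separately subharmonic function $|F|^p$, which is precisely your iterated polydisc sub--mean value argument with the polydisc of radii $r_i - l_i$ fitting inside $S_{(r_i)}$. You have simply supplied the routine details (subharmonicity of $|F|^p$ via convexity of $t \mapsto e^{pt}$, Tonelli, and the explicit constant) that the paper leaves to the reader.
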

		
		\begin{proof}
			This is an easy consequence of the mean value inequality for subharmonic functions applied to the separately 
			subharmonic function $|F|^p$.
		\end{proof}
		In addition, we consider the \emph{real $p$-Bergman space}
		\[
		A^p_\R(S_{(r_i)}):= \big\{F \in A^p(S_{(r_i)}): ~ F(\R^d)\subseteq \R\big\}.
		\]
		Endowed with the $L^p$-norm, it is a real Banach space, which is again a direct consequence of \Cref{Bergmancomplete}.\par

		We are interested in the inductive limits of the Bergman spaces with respect to all positive widths of the 
		underlying polystrip
		\begin{gather*}
			\underrightarrow A^p(\R^d):= \varinjlim_{r > 0} A^p(S_{(r)}),\\
			\underrightarrow A^p_\R(\R^d):= \varinjlim_{r > 0} A^p_\R(S_{(r)}),
		\end{gather*}
		as well as in the projective limits
		\begin{gather*}
			\underleftarrow A^p(\R^d):= \varprojlim_{r > 0} A^p(S_{(r)}),\\
			\underleftarrow A^p_\R(\R^d):= \varprojlim_{r > 0} A^p_\R(S_{(r)}).
		\end{gather*}
		We set $A^p_\R(S_{(r_i)}, \R^n) := A^p_\R(S_{(r_i)})^n$,
		$\underrightarrow A^p_\R(\R^d, \R^n) := \underrightarrow A^p_\R(\R^d)^n$, 
		and $\underleftarrow A^p_\R(\R^d, \R^n) := \underleftarrow A^p_\R(\R^d)^n$.

		\subsection{The scale of Bergman spaces on polystrips with variable width is ODE-closed}
		
		\begin{theorem}
			\label{bergmaninclusion}
			Let $0<\si<1<\rh<\infty$, $p \in [1,\infty]$, and $r>0$. Then we have continuous inclusions 
			\[
      		\xymatrix{
      			W^{\mathbf{1}, p}_{\si/(2dr)}(\R^d,\C) \ar@{{ >}->}[r]^{\quad \cS} & A^p(S_{(r)}) 
      			\ar@{{ >}->}[r]^{\!\!\!\cR} 
      			& W^{\mathbf{1}, p}_{\rh/r}(\R^d,\C),
      		}
			\]
			where $\cR$ assigns to each function in the Bergman space the restriction to $\R^d$, whereas $\cS$ assigns to each function its unique holomorphic extension to the polystrip. 
			In particular, 
			\[
			W^{\{\mathbf{1}\},p}(\R^d,\C) \cong \underrightarrow A^p(\R^d) \quad \text{ and } \quad 
			W^{(\mathbf{1}),p}(\R^d,\C) \cong \underleftarrow A^p(\R^d).
			\]
		\end{theorem}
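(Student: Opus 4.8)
The plan is to prove the two continuous inclusions $\cS$ and $\cR$ separately, with explicit control of the width parameter, and then to read off the limit isomorphisms from a purely formal interlacing of the two spectra. Throughout, for $g \in W^{\mathbf{1},p}_\si$ one has $\|\p^\al g\|_{L^p} \le \|g\|^{\mathbf{1},p}_\si\,\si^{|\al|}|\al|!$ for every $\al \in \N^d$ (since here $M=\mathbf{1}$), and I shall repeatedly use the multinomial bound $|\al|! \le d^{|\al|}\al!$ from \eqref{multinomial}.

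For the extension $\cS$, given $f \in W^{\mathbf{1},p}_{\si/(2dr)}(\R^d,\C)$ I would exhibit its holomorphic extension by the slice Taylor series
\[
F(x+iy) := \sum_{\be\in\N^d}\frac{\p^\be f(x)}{\be!}\,(iy)^\be, \qquad x \in \R^d,~ |y_j|<r,
\]
which restricts to $f$ on $\R^d$ and satisfies $\p_{y_j}F = i\,\p_{x_j}F$ term by term, so that $F$ is holomorphic; local uniform convergence is guaranteed by applying \Cref{SobolevInequality} to the derivatives of $f$, which furnishes $L^\infty$-estimates of the same Roumieu type and hence a convergent majorant on $S_{(r)}$. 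The quantitative heart is the slice estimate: with $|y|^\be := \prod_j |y_j|^{\be_j}$,
\[
\|F(\cdot+iy)\|_{L^p} \le \|f\|^{\mathbf{1},p}_{\si/(2dr)}\sum_{\be\in\N^d}\Big(\frac{\si}{2r}\Big)^{|\be|}|y|^\be = \|f\|^{\mathbf{1},p}_{\si/(2dr)}\prod_{j=1}^d\frac{1}{1-\si|y_j|/(2r)} \le 2^d\,\|f\|^{\mathbf{1},p}_{\si/(2dr)},
\]
where the first inequality uses $\|\p^\be f\|_{L^p}\le\|f\|^{\mathbf{1},p}_{\si/(2dr)}(\si/(2dr))^{|\be|}|\be|!$ together with $|\be|!\le d^{|\be|}\be!$, and the last step uses $\si<1$ and $|y_j|<r$. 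Integrating the $p$-th power over $y\in(-r,r)^d$ gives $\|\cS f\|_{L^p(S_{(r)})}\le (2r)^{d/p}\,2^d\,\|f\|^{\mathbf{1},p}_{\si/(2dr)}$ (with the evident modification for $p=\infty$), i.e.\ continuity of $\cS$.

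For the restriction $\cR$, I would combine a Cauchy estimate with a convexity bound for the slice norms. Writing $f:=\cR F$, the polydisc Cauchy formula with all radii equal to $r/\rh<r$, followed by Minkowski's integral inequality and the translation invariance of the $L^p(\R^d,dx)$-norm in the real directions, gives
\[
\|\p^\al f\|_{L^p(\R^d)} \le \frac{\al!}{(r/\rh)^{|\al|}}\sup_{y\in[-r/\rh,\,r/\rh]^d}\|F(\cdot+iy)\|_{L^p(\R^d)}.
\]
To control the slices, set $m(y):=\int_{\R^d}|F(x+iy)|^p\,dx$. Since $|F|^p$ is plurisubharmonic, $m$ is convex in each $y_j$ separately (a Hadamard three-lines phenomenon, obtained by integrating out the corresponding real variable), so iterating the one-dimensional bound $m(\cdots)\le \frac{1}{2r(1-1/\rh)}\int_{-r}^{r}m\,dv_j$ in $y_1,\dots,y_d$ yields
\[
\sup_{y\in[-r/\rh,\,r/\rh]^d} m(y) \le \frac{1}{\big(2r(1-1/\rh)\big)^d}\int_{(-r,r)^d} m(v)\,dv = \frac{\|F\|_{L^p(S_{(r)})}^p}{\big(2r(1-1/\rh)\big)^d}.
\]
Combining the two displays and using $\al!\le|\al|!$ yields $\|\cR F\|^{\mathbf{1},p}_{\rh/r}\le C(d,p,r,\rh)\,\|F\|_{L^p(S_{(r)})}$. (For $p=\infty$ the Cauchy estimate alone gives $\|\p^\al f\|_{L^\infty}\le (r/\rh)^{-|\al|}\al!\,\|F\|_{L^\infty(S_{(r)})}$, bypassing the convexity step.) I expect this uniform control of the slice norms across all $d$ variables to be the main obstacle.

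Finally, the two inclusions are mutually inverse where composable: $\cR\circ\cS=\id$, and $\cS\circ\cR=\id$ on $A^p(S_{(r)})$ by uniqueness of holomorphic extensions. Fixing $0<\si<1<\rh$, the chain $W^{\mathbf{1},p}_{\si/(2dr)}\xrightarrow{\ \cS\ } A^p(S_{(r)})\xrightarrow{\ \cR\ } W^{\mathbf{1},p}_{\rh/r}$ holds for every $r>0$. As $r\to0^+$ both parameters $\si/(2dr)$ and $\rh/r$ tend to $+\infty$, so the Bergman spectrum $(A^p(S_{(r)}))_r$ and the Roumieu spectrum $(W^{\mathbf{1},p}_n)_n$ are cofinally interlaced by the continuous linear maps $\cS,\cR$; passing to inductive limits identifies them and gives $\underrightarrow A^p(\R^d)\cong W^{\{\mathbf{1}\},p}(\R^d,\C)$. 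Symmetrically, as $r\to\infty$ both parameters tend to $0$, which interlaces the same Bergman spectrum with the Beurling spectrum $(W^{\mathbf{1},p}_{1/n})_n$ and, upon passing to projective limits, gives $\underleftarrow A^p(\R^d)\cong W^{(\mathbf{1}),p}(\R^d,\C)$; here \Cref{Bergmancomplete} guarantees that the Bergman steps are Banach spaces, so the limits live in the stated locally convex categories.
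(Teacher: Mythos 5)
Your proposal is correct, and for the restriction map $\cR$ it takes a genuinely different route from the paper. For the extension $\cS$ the two arguments essentially coincide (the slice Taylor series, with convergence justified via \Cref{SobolevInequality}); the paper estimates $|F(z)|^p$ pointwise by H\"older's inequality with the splitting $(dr/\sqrt\si)^{|\al|}\cdot\sqrt\si^{|\al|}$ and then integrates in $x$ and $y$, whereas you apply Minkowski's inequality on horizontal slices -- a cosmetic difference. For $\cR$, the paper integrates the Cauchy formula over large boxes $[-t,t]+i[-y_j,y_j]$, lets $t\to\infty$ so that the vertical sides drop out, estimates the horizontal sides by H\"older together with \Cref{integralestimate}, and then integrates in $y_j\in(0,r)$ to reconstitute the $L^p(S_{(r)})$-norm; this is where the hypothesis $\rh>1$ is used (to absorb $\prod_j(\al_jp+1)$), it only covers $\al\in\N_{\ge1}^d$, and multi-indices with zero entries need a separate subharmonic mean-value argument. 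Your polydisc Cauchy formula with fixed radius $r/\rh$, combined with Minkowski's integral inequality and translation invariance, treats all $\al\in\N^d$ uniformly and reduces everything to the slice bound $\sup_{|y_j|\le r/\rh} m(y)\le C\|F\|^p_{L^p(S_{(r)})}$, which is structurally cleaner. The one soft spot is that you assert convexity of $m(y)=\int_{\R^d}|F(x+iy)|^p\,dx$ in each $y_j$ (a Hardy/three-lines-type theorem) without proof; such theorems carry a priori hypotheses that would have to be checked for $F\in A^p(S_{(r)})$. But your argument survives without it: since $|F|^p$ is separately subharmonic in each pair $(x_j,y_j)$, the sub-mean-value inequality over the polydisc of polyradius $\de=r(1-1/\rh)$ centered at $x+iy$, followed by integration in $x$, Fubini, and translation invariance in the real directions, gives directly
\[
m(y)\le\Big(\tfrac{2}{\pi\de}\Big)^{d}\int_{(-r,r)^d}m(v)\,dv=\Big(\tfrac{2}{\pi\de}\Big)^{d}\|F\|_{L^p(S_{(r)})}^p \qquad\text{for } |y_j|\le r/\rh,
\]
which is all your proof uses. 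The final interlacing/cofinality argument identifying the inductive and projective limits is the same as the paper's (which leaves it implicit in ``in particular''), and your bookkeeping of the parameters $\si/(2dr),\rh/r\to\infty$ as $r\to0$ and $\to0$ as $r\to\infty$ is right.
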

		
		\begin{corollary}
			\label{contemb}
			In the setting of \Cref{bergmaninclusion},
			\[
			\xymatrix{
      			W^{\mathbf{1}, p}_{\si/(2dr)}(\R^d,\R) \ar@{{ >}->}[r]^{\quad \cS} & A^p_\R(S_{(r)}) 
      			\ar@{{ >}->}[r]^{\!\!\!\cR} 
      			& W^{\mathbf{1}, p}_{\rh/r}(\R^d,\R),
      		}
			\]
			in particular, $W^{\{\mathbf{1}\},p}(\R^d,\R) \cong \underrightarrow A^p_\R(\R^d)$ and 
			$W^{(\mathbf{1}),p}(\R^d,\R) \cong \underleftarrow A^p_\R(\R^d)$.	
		\end{corollary}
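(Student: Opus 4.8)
The plan is to read off the real version directly from \Cref{bergmaninclusion} by verifying that the restriction map $\cR$ and the holomorphic extension map $\cS$ both preserve the real structure, so that they restrict to continuous maps between the relevant real subspaces. First I would observe that for $F \in A^p_\R(S_{(r)})$ the restriction $\cR(F) = F|_{\R^d}$ is real-valued by the very definition of $A^p_\R(S_{(r)})$; since \Cref{bergmaninclusion} already gives $\cR(F) \in W^{\mathbf{1},p}_{\rh/r}(\R^d,\C)$ together with the corresponding norm bound, and a real-valued element of $W^{\mathbf{1},p}_{\rh/r}(\R^d,\C)$ lies in $W^{\mathbf{1},p}_{\rh/r}(\R^d,\R)$, the map $\cR$ restricts to a continuous inclusion $A^p_\R(S_{(r)}) \hookrightarrow W^{\mathbf{1},p}_{\rh/r}(\R^d,\R)$ with the same constant.

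For the other inclusion I would use that $\cS$ is a right inverse of $\cR$. Given a real-valued $f \in W^{\mathbf{1},p}_{\si/(2dr)}(\R^d,\R)$, \Cref{bergmaninclusion} produces its unique holomorphic extension $F := \cS(f) \in A^p(S_{(r)})$, and since $F|_{\R^d} = \cR(\cS(f)) = f$ is real-valued, we get $F \in A^p_\R(S_{(r)})$ by definition. The continuity estimate is inherited verbatim, so $\cS$ restricts to a continuous inclusion $W^{\mathbf{1},p}_{\si/(2dr)}(\R^d,\R) \hookrightarrow A^p_\R(S_{(r)})$. Together with the previous paragraph this yields the interlaced chain displayed in the statement.

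Finally I would pass to the limits. The connecting maps defining $\underrightarrow A^p_\R(\R^d)$ and $\underleftarrow A^p_\R(\R^d)$ are restrictions of holomorphic functions and hence preserve realness on $\R^d$, so forming the real subspace commutes with both the inductive and the projective limit. Exactly as in the complex case, the interlacing $W^{\mathbf{1},p}_{\si/(2dr)}(\R^d,\R) \hookrightarrow A^p_\R(S_{(r)}) \hookrightarrow W^{\mathbf{1},p}_{\rh/r}(\R^d,\R)$, valid for every $r>0$ with $\si<1<\rh$ fixed, makes the Bergman scale and the Sobolev--Denjoy--Carleman scale mutually cofinal as $r$ ranges over $(0,\infty)$; taking $\varinjlim_{r>0}$ gives $W^{\{\mathbf{1}\},p}(\R^d,\R) \cong \underrightarrow A^p_\R(\R^d)$ and taking $\varprojlim_{r>0}$ gives $W^{(\mathbf{1}),p}(\R^d,\R) \cong \underleftarrow A^p_\R(\R^d)$. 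I do not expect a genuine obstacle: the corollary is \Cref{bergmaninclusion} plus the bookkeeping identity $\cR \circ \cS = \id$, which forces both maps to respect the real subspaces; the only point needing a line of care is that passing to real parts commutes with the (LB)- and Fr\'echet-limit topologies, which is immediate from the real-structure preservation of the connecting restriction maps.
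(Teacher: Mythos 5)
Your argument is correct and is essentially the paper's own: the corollary is stated there without proof precisely because it follows from \Cref{bergmaninclusion} by the observations you make, namely that $A^p_\R(S_{(r)})$ is by definition the subspace of $F$ with $F(\R^d)\subseteq\R$, that $\cR\circ\cS=\id$ forces $\cS$ to map real-valued functions into $A^p_\R(S_{(r)})$, and that the interlaced inclusions (with norms and connecting maps all preserving realness) pass to the inductive and projective limits unchanged. No gaps; your spelled-out version matches what the paper leaves implicit.
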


		As an application, we obtain that the spaces $\underrightarrow A^p_\R(\R^d, \R^d)$ 
		and $\underleftarrow A^p_\R(\R^d, \R^d)$ are ODE-closed in the following sense:
		In analogy to \Cref{sec:diffgroups}, let $\cF_{\underrightarrow A^p_\R(\R^d,\R^d)}$ be the set of functions 
		$u: I \rightarrow \underrightarrow A^p_\R(\R^d,\R^d)$, that actually map into some step $A^p_\R(S_{(r)},\R^d)$ and are Bochner integrable therein, in particular,
		\[
		\exists r > 0: \int_0^1 \|u(t)\|_{L^p(S_{(r)})}\,dt < \infty.
		\]
		Moreover, let $\cF_{\underleftarrow A^p_\R(\R^d,\R^d)} := L^1(I,\underleftarrow A^p_\R(\R^d,\R^d))$ so that 
		\[
		\forall r > 0: \int_0^1 \|u(t)\|_{L^p(S_{(r)})}\,dt < \infty.
		\]

		\begin{theorem}
			We have:
			\begin{enumerate}
				\item For each $u \in \cF_{\underrightarrow A^p_\R(\R^d,\R^d)}$ the flow $t \mapsto \ph_u(t)$ is a 
				continuous curve in $\underrightarrow A^p_\R(\R^d,\R^d)$.
				\item For each $u \in \cF_{\underleftarrow A^p_\R(\R^d,\R^d)}$ the flow $t \mapsto \ph_u(t)$ is a 
				continuous curve in $\underleftarrow A^p_\R(\R^d,\R^d)$.
			\end{enumerate}
		\end{theorem}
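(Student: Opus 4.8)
The plan is to transport the statement through the topological isomorphisms furnished by \Cref{contemb} and then invoke the continuity results already established for the Sobolev--Denjoy--Carleman flows. Since $\mathbf{1}=(1,1,\ldots)$ is a regular sequence---it is non-decreasing, trivially log-convex, and of moderate growth with constant $C=1$, although it is \emph{not} strictly regular---the restriction map $\cR$ identifies, componentwise, $\underrightarrow A^p_\R(\R^d,\R^d)\cong W^{\{\mathbf{1}\},p}(\R^d,\R^d)$ and $\underleftarrow A^p_\R(\R^d,\R^d)\cong W^{(\mathbf{1}),p}(\R^d,\R^d)$ as topological vector spaces.

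The crucial observation is that the flow $\ph_u(t,\cdot)$ is determined by the integral equation \eqref{ode} on $\R^d$ alone, and hence depends only on the restriction $\cR\circ u$; thus the flow computed in the Bergman picture and in the Sobolev--Denjoy--Carleman picture is literally the same map $\R^d\to\R^d$. I would first verify that $\cR\circ u$ belongs to the appropriate admissible family. In case (1), Bochner integrability of $u$ in some step $A^p_\R(S_{(r)},\R^d)$ combined with the continuous inclusion $\cR\colon A^p_\R(S_{(r)})\hookrightarrow W^{\mathbf{1},p}_{\rh/r}$ from \Cref{bergmaninclusion} gives $\int_0^1\|\cR\circ u(t)\|^{\mathbf{1},p}_{\rh/r}\,dt<\infty$, which is exactly condition \eqref{ultraRoumieuSobolev} for $M=\mathbf{1}$; hence $\cR\circ u\in\cF_{W^{\{\mathbf{1}\},p}}$. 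In case (2), integrability by seminorm of $u$ means $\int_0^1\|u(t)\|_{L^p(S_{(r)})}\,dt<\infty$ for every $r>0$, and letting $r$ range over $(0,\infty)$ in the same inclusions (with $\rh>1$ fixed, so that $\rh/r$ sweeps out all of $(0,\infty)$) produces \eqref{ultraBeurlingSobolev}, so $\cR\circ u\in\cF_{W^{(\mathbf{1}),p}}$.

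With the integrability in hand, I would apply \Cref{MpRoumieu} (or \Cref{MinfRoumieu} when $p=\infty$) in case (1) and \Cref{BeurlingSobolev} in case (2); crucially, none of these propositions require \emph{strict} regularity of $M$, so they apply to $M=\mathbf{1}$. They yield that $t\mapsto\ph_u(t)$ is a continuous curve into $W^{\{\mathbf{1}\},p}(\R^d,\R^d)$, respectively $W^{(\mathbf{1}),p}(\R^d,\R^d)$. Because $\cR$ is a topological isomorphism onto its image, continuity is preserved when passing back, so $t\mapsto\ph_u(t)$ is continuous into $\underrightarrow A^p_\R(\R^d,\R^d)$, respectively $\underleftarrow A^p_\R(\R^d,\R^d)$. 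Finally, since $u$ takes real values on $\R^d$, the solution of \eqref{ode} is real, placing $\ph_u(t)$ in the \emph{real} Bergman limit, as the statement demands.

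I do not anticipate a genuine analytic obstacle: the hard work is already contained in \Cref{bergmaninclusion} and in the flow propositions of \Cref{sec:diffgroups}. The only point that demands care is the bookkeeping---matching the two descriptions of the admissible families $\cF$ under $\cR$, and confirming that the flow, being computed intrinsically from its values on $\R^d$, is genuinely the same object in both the Bergman and the ultradifferentiable pictures. Once this identification is made explicit, the theorem follows by direct translation.
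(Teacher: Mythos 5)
Your proposal is correct and follows essentially the same route as the paper, whose proof consists precisely of citing \Cref{contemb} together with \Cref{MinfRoumieu}/\Cref{MpRoumieu} for (1) and \Cref{BeurlingSobolev} for (2). Your explicit bookkeeping---checking that $\cR\circ u$ lands in $\cF_{W^{\{\mathbf 1\},p}}$ resp.\ $\cF_{W^{(\mathbf 1),p}}$, and noting that $\mathbf 1$ is regular but \emph{not} strictly regular while the cited flow propositions only require regularity---merely makes explicit what the paper leaves implicit.
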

		
		\begin{proof}
			(1) follows from \Cref{contemb}, \Cref{MinfRoumieu}, and \Cref{MpRoumieu}.
			(2) is a consequence of \Cref{contemb} and \Cref{BeurlingSobolev}. 
		\end{proof}

		To prove \Cref{bergmaninclusion}, we need some auxiliary results.

		\begin{lemma}
			\label{integralestimate}
			There exists $D > 0$ such that for all $\al \in \N_{\ge 1}^d$,
			\[
			\int_{\R^d} \frac{1}{\prod_{j = 1}^d |x_j + iy_j|^{\al_j+1}} \,d\la(x) \le \frac{D}{\prod_{j = 1}^d |y_j|^{\al_j}},
			\]
			where $\la$ denotes the Lebesgue measure in $\R^d$.
		\end{lemma}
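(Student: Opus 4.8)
The plan is to exploit that both the integrand and the domain factor over the coordinates, so that the whole estimate reduces to a single one-dimensional bound whose constant does not depend on the exponent. Since the integrand is nonnegative, Tonelli's theorem lets me write
\[
\int_{\R^d} \prod_{j=1}^d \frac{1}{|x_j + iy_j|^{\al_j+1}}\, d\la(x) = \prod_{j=1}^d \int_\R \frac{dx_j}{|x_j + iy_j|^{\al_j+1}},
\]
so it suffices to estimate each factor separately.

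Fixing one coordinate and writing $|x+iy|^2 = x^2 + y^2$, I would substitute $x = |y|\,t$ to pull out the decay in $y$:
\[
\int_\R \frac{dx}{(x^2+y^2)^{(\al+1)/2}} = \frac{1}{|y|^{\al}} \int_\R \frac{dt}{(t^2+1)^{(\al+1)/2}}.
\]
The remaining integral must then be bounded uniformly in $\al \in \N_{\ge 1}$. The key observation is that for $\al \ge 1$ the exponent $(\al+1)/2$ is at least $1$, so $(t^2+1)^{(\al+1)/2} \ge t^2+1$, whence
\[
\int_\R \frac{dt}{(t^2+1)^{(\al+1)/2}} \le \int_\R \frac{dt}{t^2+1} = \pi.
\]
This simultaneously guarantees convergence (since $\al+1 \ge 2 > 1$) and, more importantly, yields a bound that is independent of $\al$.

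Combining these, each factor is at most $\pi/|y_j|^{\al_j}$, and multiplying over $j = 1,\ldots,d$ gives the assertion with $D = \pi^d$. There is no genuine obstacle in this lemma; the only point that deserves attention is that the constant can be chosen uniformly in the multi-index $\al$, which is precisely what the monotonicity bound $(t^2+1)^{(\al+1)/2} \ge t^2+1$ for $\al \ge 1$ delivers.
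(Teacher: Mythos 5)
Your proof is correct and is essentially the paper's own argument: the paper's entire proof reads ``Substitute $\xi_j = x_j/y_j$,'' which is precisely your scaling substitution $x = |y|\,t$ carried out coordinate-wise, with the Tonelli factorization and the uniform-in-$\al$ bound $\int_\R (t^2+1)^{-(\al+1)/2}\,dt \le \pi$ left implicit. You have simply written out the details the paper suppresses.
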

		
		\begin{proof}
			Substitute $\xi_j = x_j/y_j$.
		\end{proof}

		\begin{proof}[Proof of \Cref{bergmaninclusion}]
			First we show
			continuity of $\cS$. 
			Fix $d \in \N$, $p \in [1,\infty]$, $r>0$, and $0<\si<1$. By \Cref{SobolevInequality}, 
			there exists a constant $C_1=C_1(p,d)$ such that for all $f \in C^\infty(\R^d)$, 
			$\be \in \N^d$, and $x \in \R^d$, 
			\[
			|\p^\be f (x)| \le C_1\sum_{|\al| \le l} \|\p^{\be+\al} f\|_{L^p},
			\]
			where $l:= \lfloor \frac{d}{p} \rfloor+1$. Therefore, for all $f \in W^{\mathbf{1}, p}_{\si/(2dr)}(\R^d, \C)$,
			\begin{align*}
			|\p^\be f(x)| &\le C_1\sum_{|\al|\le l} \|f\|^{\mathbf{1}, p}_{\si/(2dr)}
			\Big(\frac{\si}{2dr}\Big)^{|\be|+|\al|} (|\be|+|\al|)!
			\\
			&
						\le C_1  \|f\|^{\mathbf{1}, p}_{\si/(2dr)} \Big(\frac{\si}{dr}\Big)^{|\be|} |\be|! 
						\Big(\sum_{|\al|\le l} \Big(\frac{\si}{dr}\Big)^{|\al|} |\al|!\Big).
			\end{align*}
			Thus $f$ admits a holomorphic extension $F$ on $S_{(r)}$ with  
			\[
			F(z) = \sum_{\al \in \N^d} \p^\al f(x_1, \dots, x_d) \prod_{j =1}^d \frac{(iy_j)^{\al_j}}{\al_j}, \quad 
			z = (x_j +iy_j)_{j = 1}^d \in S_{(r)}. 
			\]
			Applying H\"older's inequality and \eqref{multinomial}, we thus get
			\begin{align*}
				|F(z)|^p \le \Big(\sum_{\al} \frac{|\p^\al f(x)|}{|\al|!} (dr)^{|\al|} \Big)^p \le 
				\Big(\sum_{\al} \Big(\frac{|\p^\al f(x)|}{|\al|!}\Big)^p \Big(\frac{dr}{\sqrt \si}\Big)^{p|\al|}\Big) 
				\Big(\sum_{\al} \sqrt \si^{q|\al|} \Big)^{p/q} 
			\end{align*}
			if $p \in (1,\infty)$ and $1/p + 1/q=1$.
			Thus,	
			\begin{align*}
			\int_{\R^d} |F(z)|^p \,d\la(x)
			\le C_2 \sum_{\al \in \N^d} \big(\|f\|^{\mathbf{1}, p}_{\si/(dr)}\big)^p \Big(\frac{\si}{dr}\Big)^{p|\al|}
			\Big(\frac{dr}{\sqrt \si}\Big)^{p|\al|} 
			&\le C_3
			\big(\|f\|^{\mathbf{1}, p}_{\si/(dr)}\big)^p.
			\end{align*}
			Then integration with respect to the $y_j$ over $(-r,r)$ implies continuity of $\cS$.
			The cases $p=1,\infty$ are seen similarly.

			Now let us consider $\cR$.
			Let $0<y_j<r$ for $1\le j \le d$ and consider the box $B(t,u):= [-t,t]+i[-u,u]$. 
			Let $F \in A^p(S_{(r)})$ and set $f:= F|_{\R^d}$. Then for $x \in \R^d$, $\al \in \N^d$, and $t>0$ 
			sufficiently large, we get (by iterating the Cauchy integral formula)
			\begin{align} \label{eq:Cauchy}
			&\p^{\al}f(x)
			=\frac{\al!}{(2\pi i)^d} \int_{\p B(t,y_1)} \dots \int_{\p B(t,y_d)} \frac{F(\ze_1, \dots, \ze_d)}{(\ze_1-x_1)^{\al_1+1} \cdots (\ze_d-x_d)^{\al_d+1}} \, d\ze_d \dots d\ze_1.
			\end{align}
			If we let $t$ tend to $\infty$ the contribution of the integrals over the vertical boundary parts 
			of the boxes $B(t,y_j)$ tends to $0$. 
			In order to estimate the contribution of the horizontal parts, observe that, by \Cref{integralestimate},
			for $1/p + 1/q=1$ and $k \ge 1$, 
			\begin{align*}
				\int_{-\infty}^\infty \frac{|G(t + i u)|}{|t - x + i u|^{k+1}} \,dt \le 
				\Big(\int_{-\infty}^\infty \frac{|G(t + i u)|^p}{|t - x + i u|^{k+1}} \,dt\Big)^{1/p}
				\Big(\frac{D}{|u|^k}\Big)^{1/q},	
			\end{align*} 
			and an analogous estimate holds if we integrate with respect to $x$ instead of $t$.
			Applying this to \eqref{eq:Cauchy} we find, for $\al \in \N^d_{\ge 1}$,  
			\begin{align*}
			\MoveEqLeft
			\|\p^\al f\|_{L^p(\R^d)}^p \prod_{j=1}^d |y_j|^{p\al_j}\\ &\le C_4 |\al|!^p \sum_{\ta \in \{0,1\}^d} \int_{-\infty}^{\infty}\dots \int_{-\infty}^{\infty}|F(((-1)^{\ta_j} \xi_j+i(-1)^{\ta_j+1}y_j)_{j=1}^d)|^p\,d\xi_d \dots d\xi_1.
			\end{align*}			
			Integration with respect to $y_j$ from $0$ to $r$ now yields
			\begin{align*}
			\|\p^\al f\|_{L^p(\R^d)}^p  \frac{r^{|\al|p+1}}{\prod_{j=1}^d (\al_jp+1)}\le C_4 |\al|!^p\|F\|_{L^p(S_{(r)})}^p,
			\end{align*}
			and thus, for $\rh >1$ so that $\rh^{-p |\al|} \prod_{j=1}^d (\al_jp+1)$ is bounded for all $\al$, 
			\[
			\sup_{\al \in \N_{\ge 1}^d} \frac{\|\p^\al f\|_{L^p(\R^d)}}{(\rh/r)^{|\al|} |\al|! } \le C_5  \|F\|_{L^p(S_{(r)})}.
			\]
			For $\al \in \N^d$ with some $\al_j = 0$, one uses the mean value inequalities for subharmonic functions for those zero-entries and the already established estimate for the non-zero entries to complete the proof.
		\end{proof}



\def\cprime{$'$}
\providecommand{\bysame}{\leavevmode\hbox to3em{\hrulefill}\thinspace}
\providecommand{\MR}{\relax\ifhmode\unskip\space\fi MR }
\providecommand{\MRhref}[2]{%
  \href{http://www.ams.org/mathscinet-getitem?mr=#1}{#2}
}
\providecommand{\href}[2]{#2}

\end{document}